% 23 August 2020

\documentclass[12pt]{amsart}

\usepackage{amssymb,amsfonts,graphics,mathrsfs,color,tikz,enumitem}

\textwidth460pt
\textheight600pt
\evensidemargin5pt
\oddsidemargin5pt

\theoremstyle{plain}
\newtheorem{theorem}{Theorem}[section]
\newtheorem{corollary}[theorem]{Corollary}
\newtheorem{definition}[theorem]{Definition}
\newtheorem{lemma}[theorem]{Lemma}
\newtheorem{proposition}[theorem]{Proposition}

\newtheorem{remark}[theorem]{\it Remark}
\newtheorem{notation}[theorem]{\it Notation}

\newcommand\encircle[1]{\tikz[baseline=(X.base)] 
    \node(X)[draw, shape=circle, inner sep=-2]{\strut #1};}

\def\aa{a}
\def\E{E}
\def\cuv{\bfu\cd\bfv}
\def\JJ{\mathbb{J}}

\def\Sf{\mathbb{S}^2}
\def\Sm{\mathbb{S}^1}
\def\half{{\ts\frac12}}
\def\qart{{\ts\frac14}}
\def\pD{\partial D^3}
\def\oz{{\overline z}^{}}
\def\tz{{\tilde z}^{}}
\def\1{U(1)}
\def\2{G_2}
\def\3{SO(3)}
\def\LaS{\La^2_-T^*\!S^4}
\def\eu{e}
\def\gg{g^{}}
\def\hh{h^{}}
\def\BS{h^{}_\mathrm{BS}}
\def\hj{\hat\j}
\def\wg{{\widehat g}^{}}
\def\wh{{\widehat h}^{}}
\def\pq{Q}
\def\wF{\widehat F}
\def\bfq{\mathbf{q}}
\def\wfu{\mathbf{U}}
\def\wfv{\mathbf{V}}
\def\buv{(\bfu,\bfv)}
\def\smz{\setminus\mathbf0}
\def\vasphi{\hbox{$*\varphi$}}
\def\BSphi{\varphi^{}_\mathrm{BS}}
\def\BSsi{\sigma^{}_\mathrm{BS}}
\def\chern{\mathbf{c}}
\def\ch{\raise2pt\hbox{$\chi$}}
\renewcommand{\,}{\kern1pt}
\renewcommand{\!}{\kern-2pt}
\def\ceq{\kern4pt\hbox{\small\encircle{$=$}}\kern4pt}
\def\ceqq{\kern4pt\hbox{\small$\widehat{\encircle{$=$}}$}\kern4pt}
\def\T#1,#2,#3,{\{#1,#2,#3\}}
\def\qbox#1{\quad\hbox{#1}\quad}
\def\rbox#1{\raise4pt\hbox{$#1$}}
\def\lbox#1{\lower4pt\hbox{$#1$}}
\def\+{\!+\!}
\def\-{\!-\!}
\def\={\!=\!}
\def\cd{\kern-1pt\cdot\kern-1pt}
\def\ti{\!\times\!}
\def\ep{l}
\def\vep{\varepsilon}
\def\bfA{\mathbf{A}}
\def\bfB{\mathbf{B}}

\def\bfn{\mathbf{n}}
\def\bfp{\mathbf{p}}
\def\bfq{\mathbf{q}}
\def\bfu{\mathbf{u}}
\def\bfv{\mathbf{v}}
\def\bfs{\hbox{\boldmath$\sigma$}}
\def\bft{\hbox{\boldmath$\tau$}}
\def\bfm{\mathbf{m}}
\def\bfx{\mathbf{x}}
\def\bfz{\mathbf{z}}

\def\q{q^{}}
\def\u{u^{}}
\def\v{v^{}}
\def\x{x^{}}
\def\z{z^{}}
\def\gr{\color[rgb]{0,0,0}}
\def\ge{\geqslant}
\def\le{\leqslant}
\def\y{\\[3pt]}
\def\yy{\\[5pt]}
\def\yyy{\\[10pt]}
\def\ip{\raise1pt\hbox{\large$\lrcorner$}\>}
\def\suml{\sum\limits}
\def\vph{\vphantom{\lower2pt\hbox{p}\raise5pt\hbox{d}}}
\def\al{\alpha}
\def\tu{\tau^{}}

\def\th{\theta}
\def\Th{\Theta}
\def\hal{\hat\alpha}
\def\sC{\mathscr{C}}
\def\sF{\mathscr{F}}
\def\sL{\mathscr{L}^{}}
\def\sO{\mathscr{O}}
\def\sM{\mathscr{M}}
\def\sW{\mathscr{W}}
\def\Im{\mathop{\mathrm{Im}}}
\def\Re{\mathop{\mathrm{Re}}}
\def\Up{\Upsilon}
\def\pd{\partial^{}}
\def\Spin{\mathit{Spin}}
\def\de{\delta}
\def\ga{\gamma}
\def\la{\lambda}
\def\om{\omega}
\def\si{\sigma}

\def\Ga{\Gamma}
\def\La{\Lambda}

\def\op{\oplus}
\def\we{\wedge}
\def\ds{\displaystyle}
\def\ts{\textstyle}
\def\ba{\begin{array}}
\def\ea{\end{array}}
\def\be#1{\begin{equation}\label{#1}}
\def\ee{\end{equation}}
\def\bt{\begin{tabular}}
\def\et{\end{tabular}}
\def\ot{\!\otimes\!}

\def\lra{\longrightarrow}
\def\lmt{\longmapsto}
\def\ol{\overline}
\def\C{\mathbb{C}}
\def\CP{\mathbb{CP}}
\def\HP{\mathbb{HP}}
\def\H{\mathbb{H}}
\def\R{\mathbb{R}}
\def\Z{\mathbb{Z}}

\def\fT{\mathfrak{T}}
\def\fm{\mathfrak{m}}
\def\sp{\mathfrak{sp}}
\def\su{\mathfrak{su}}
\def\fu{\mathfrak{u}}
\def\vs{\vskip5pt}
\def\twenty#1{\fontsize{20}{20}\selectfont\bf #1}
\def\sm{\setminus}

\begin{document}

\parskip2pt
\parindent16pt
\mathsurround.5pt

\centerline{\twenty A circle quotient of a G$_2$ cone}

\vskip25pt

\centerline{Bobby Samir Acharya, Robert L.\ Bryant, and Simon Salamon}

\vskip25pt

\begin{quote}\small 
{\bf Abstract.} A study is made of $\R^6$ as a singular quotient of the conical
space $\R^+\times\CP^3$ with holonomy $\2,$ with respect to an obvious action
by $\1$ on $\CP^3$ with fixed points. Closed expressions are found for the
induced metric, and for both the curvature and symplectic 2-forms
characterizing the reduction. All these tensors are invariant by a diagonal
action of $\3$ on $\R^6,$ which can be used effectively to describe the
resulting geometrical features.
\end{quote}

\tableofcontents

\vspace{-20pt}

\section{Introduction}

Let $\sC$ denote the real 7-dimensional manifold $\R^+\times\CP^3$ endowed with
its $SO(5)$-invariant conical metric whose holonomy group is conjugate to
$\2$. We shall denote this Riemannian metric by $\hh_2$. Its isolated
singularity can be smoothed by passing to a complete $\2$ metric on the total
space of half the bundle of 2-forms over $S^4$ (as two of the authors showed in
\cite{BS} and others in \cite{GPP}). Restricting to unit 2-forms defines the
Penrose twistor fibration
\[\pi\colon\ \CP^3\lra S^4,\]
which also plays a key role in understanding the conical $\2$ structure. The
latter is most easily defined by means of a closed 3-form $\varphi,$ which
determines $\hh_2,$ and a closed 4-form $\vasphi$ (with $*$ defined by $\hh_2$
and ultimately $\varphi$).

We shall study a quotient \be{Q} \pq\colon\ \sC\lra\sM\ee of $\sC$ by a natural
circle subgroup $\1$ with fixed points, and identify the metric $\gg_2$ induced
on the 6-dimensional base $\sM$. The group $\1$ is induced by left
multiplication by $e^{i\th}$ on $\H^2=\C^4,$ commuting with the
projectivization to $\CP^3$. The same action can be obtained from rotation of
two coordinates of $\R^5=\R^2\op\R^3,$ since this lifts to the one we want via
$\pi$. Modulo the origin, $\sM$ is a $\1\times \1$ quotient of $\R^8$ and,
applying the Gibbons-Hawking ansatz \cite{GH}, one can identify $\sM$ with
$\R^3\times\R^3$ in which each `axis' $\R^3$ arises from the fixed point set of
the respective $\1$.

A point of $\sM$ will be represented by a `bivector' $\buv$ with $\bfu\in\R^3$
and $\bfv\in\R^3$. (Our terminology acknowledges that of \cite{WRH}, in which a
bivector is the vector part of a quaternion with complex coefficients, in our
case a hyperk\"ahler moment map.)  This description is closely related to the
K\"ahler quotient
\[\CP^3/\!/\,\1\>\cong\>\CP^1\times\CP^1.\]
The K\"ahler picture, and an associated metric $\gg_1$ on $\sM,$ provides a
useful comparison for some of our results. However, we are primarily interested
in tensors arising from $\2,$ which explains why we add an action by $\R^+$
rather than remove one. {\gr Inside $\CP^3,$ the circle action fixes two
  projective lines, the twistor lifts of the fixed 2-sphere
  $\Sf=S^4\cap\R^3$. The two projective lines are swapped by an anti-linear
  involution $j$ (see Lemma \ref{j}) and lie in a unique $U(2)$-invariant
  complex quadric in $\CP^3$ (defined in Section \ref{SO3}).}

Each $\1$ (or more effectively, $SO(2)$) orbit on $S^4$ is specified by a
unique point of norm at most one in $\R^3,$ so we can identify $S^4/SO(2)$ with
the closed unit ball $D^3$. This gives rise to a commutative diagram in which
$\varpi$ is induced by $\pi$ and an $\R^+$ quotient:\vs

\[\ba{ccc} 
&\H^2\smz&\\[5pt]
\rbox{\rho}\swarrow&\hskip30pt&\searrow\rbox{\ \ }\\[10pt]
\sC=\R^+\ti\CP^3\hskip30pt&
\stackrel{\hbox{$\pq$}}\lra&\hskip20pt\R^6\smz=\sM\\[15pt]
\lbox{\ }\searrow&\hskip50pt&\swarrow\lbox{\varpi}\yyy
& D^3&
\ea\]\par
\[\hbox{Figure 1: Quotients described by spaces of dimension 
$7,$ $6$ and $3$}\]\vs\vs

{\gr Significant motivation for this work came from physics via Atiyah and
  Witten who first highlighted many aspects of the rich geometry in this
  construction in the context of M-theory and the relation to Type IIA
  superstring theory \cite{AW}. The idea is that, physically, M-theory
  formulated on $\sC$ is dual to Type IIA superstring theory on $\sM$. The
  fixed points of the $U(1)$ action on the $\2$ manifold are identified with
  $D$6-branes in Type IIA theory on $\sM$. In \cite{AW}, much of the discussion
  concerning this point is topological in nature and does not focus on the
  metric on $\sM,$ which is a primary focus of this work.}

By analogy to the families of metrics with $\2$ holonomy interpolating between
highly-collapsed metrics and those asymptotic to the cone over $S^3\ti S^3$ in
\cite{FHN}, one might expect $\sM$ to acquire a Calabi-Yau metric and the
singular $\R^3$'s to be special Lagrangian. In our situation, there is no such
collapsed limit because $\sC$ has no finite circles at infinity, and our work
shows that the picture painted in \cite{AW} is somewhat of an
oversimplification.  However, we do show that the induced symplectic form $\si$
is very easy to describe on $\sM$ and that the singular $\R^3$'s are
Lagrangian, {\gr as foreseen in \cite{AW}.}

Our aim is to describe the $SU(3)$ structure $(\gg_2,\JJ,\si)$ induced on the
smooth locus $\sM'$ of $\sM$. Contrary to the assumption adopted in
\cite{ApoS}, the quotient is not K\"ahler, but one can rescale $\gg_2$ so that
$\si$ has constant norm and we are dealing with an \emph{almost K\"ahler
  structure}. There is a residual diagonal action of $\3$ on $\sM'$ that
preserves the tensors $\gg_2,\JJ,\si$. The singular nature of the quotient
makes our initial formulae complicated, as they involve radii functions that
are not smooth across $\R^3\cup\R^3$. Part of our task is to find coordinates
on $\R^6,$ or subvarieties thereof, that are better adapted to $\si$ and
$\gg_2$.

\vskip20pt\label{gloss}

\begin{center}
\bt{|l|c|l|}\hline
\multicolumn{3}{|c|}{\sc Glossary of notation}\\\hline\hline\vph
tensor & defined on/in & description\\\hline\hline\vph
$\eu$ & $\R^8$ & Euclidean metric\\ 
$R$ && Euclidean norm squared\\
$X$ && Killing vector field\\\hline\vph
$\wh_1$ & $\CP^3$ & K\"ahler metric\\
$\wh_2$ && nearly-K\"ahler metric\\
$\om$ && nearly-K\"ahler 2-form\\
$\Up$ && nearly-K\"ahler $(3,0)$-form\\\hline\vph
$\hh_2$ & $\sC$ & $\2$ metric\\
$\hh_c$ && more general conical metric\\
$\BS$ && complete $\2$ metric\\
$\varphi$ && $\2$ 3-form\\
$\vasphi$ && $\2$ 4-form\\
$\Th_c$ && connection 1-form\\\hline\vph
$g_c$ & $\sM$ & metric induced from $h_c$\\
$\wg_c$ && restriction of $\gg_c$ to $R=1$\\
$F_c=d\Th_c$ && curvature 2-form\\
$\si$ && symplectic 2-form\\
$\JJ$ && almost complex structure\\
$\Psi=\psi^++i\psi^-$ && $(3,0)$-form\\
$\sF_+,\sF_-$ && $\3$-invariant subvarieties\\
$\sM(\bfn)$ && $\JJ$-holomorphic subvarieties\\\hline
\et
\end{center}

\vskip15pt

We introduce the Gibbons-Hawking ansatz in Section \ref{GH}, and apply it to a
baby model of a circle quotient of a $\2$ structure. To analyse our curved
example, we pull $\hh_2$ and other tensors back (via $\rho$) to $\H^2\cong\R^8$
in Section \ref{holo}, and exploit the ambient hyperk\"ahler structure. {\gr The
novelty in our approach consists of expressing everything in Euclidean terms on
$\R^8.$} In Section \ref{T2}, we consider commuting circle actions and describe
the $\2$ structure on $\sC$. This is motivated by work of the first author
\cite{AchW}, and we hope to use our methods subsequently to understand circle
actions with different weights on $\R^8$. Using the 2-torus action on $\R^8$
and the map $\pq,$ we identify the metric $\gg_1$ on $\sM$ arising from the
Fubini-Study metric of $\CP^3,$ and the more complicated metric $\gg_2$ induced
from the $\2$ structure of $\sC$ (Theorems \ref{g1} and \ref{g2}).

The diagonal action of $\3$ on $\sM\subset\R^6$ enables us to use the
bivector formalism to describe invariant tensors in Section
\ref{curv}. We show that the curvature 2-forms $F_1,F_2,$ are
determined by their restrictions to $S^2\ti S^2$ (Theorem \ref{F}).
In Section \ref{SO3}, we focus on two 4-dimensional $\3$-invariant
submanifolds $\sF_\pm$ in $\sM$ such that $\sF_+$ projects to $\pd
D^3,$ while the circle fibres of $\pq$ are horizontal over $\sF_-$. As
an application, we use the twistor fibration to describe a foliation
of $\sC$ by coassociative submanifolds discovered by Karigiannis and
Lotay \cite{KL2} (Theorem \ref{cas}).

In Section \ref{SU3}, we show that Darboux coordinates for $\si$ can
be expressed remarkably simply in terms of the bivector $\buv,$ though
this result (Theorem \ref{si}) was by no means obvious. It contrasts
with the difficulty in describing the almost complex structure $\JJ,$
though we compute a compatible $(3,0)$-form on $\sM$ and verify that
$\JJ$ is non-integrable. The nature of this 3-form leads us to exhibit
a family of 4-dimensional pseudo-holomorphic linear subvarieties
$\sM(\bfn)$ parametrized by $\mathbb{RP}^2$ that exhaust $\sM$.

We investigate the metrics $\gg_1$ and $\gg_2$ in Section \ref{met},
and distinguish subvarieties on which they are flat. In particular, we
determine their restriction to $\sF_+$ and $\sF_-$
(Theorem \ref{sF+-}), and highlight geometrical aspects that `ignore'
the singularities of $\sM$.\vs

\noindent{\small\textbf{Acknowledgments.} The authors are supported by
  the Simons Collaboration on Special Holonomy in Geometry, Analysis,
  and Physics (\#488569 Bobby Acharya, \#347349 Robert Bryant,
  \#488635 Simon Salamon).} They are grateful to Spiro Karigiannis and
Jason Lotay for sharing some results from \cite{KL2} discussed in
Section \ref{SO3}. The third author acknowledges useful exchanges of
ideas with Benjamin Aslan, Udhav Fowdar, Wendelin Lutz, and Corvin
Paul.\vs\vs

\setcounter{equation}0
\section{Preliminaries}\label{GH}

This section serves both to motivate the more technical work that
follows, and to introduce the Gibbons-Hawking ansatz in a simple $\2$
context.

In the study of Ricci-flat metrics with special holonomy, there is a
well-established connection between structures defined by the Lie
groups $SU(3),$ $\2$ and $\Spin(7)$ in dimensions 6, 7 and 8.  Hand in
hand with the condition of \emph{reduced holonomy} is that of
\emph{weak holonomy}; the former is characterized by the existence of
a non-zero \emph{parallel} spinor, the latter by a \emph{Killing}
spinor. If an $n$-dimensional manifold $M^n$ (with $n=5,6,7$) has a
Riemannian metric $g$ with weak holonomy, then the cone $dr^2+r^2g$
has reduced holonomy on $M^n\times\R^+$ \cite{Bar}, and the sine-cone
$dr^2+(\sin r)^2g$ has weak holonomy on $M^n\times(0,\pi)$ \cite{AL}.
Examples of such metrics permeate this paper, though our focus will be
on quotienting a 7-dimensional manifold by a circle action.

In this paper, we restrict attention to $\2$ holonomy in seven
dimensions, and $SU(3)$ structures (invariably without reduced
holonomy) in six dimensions. A $\2$ structure on a 7-manifold $M$ is
determined by a `positive' non-degenerate 3-form $\varphi$ that
satisfies
\[ d\varphi=0\qbox{and} d\vasphi=0.\] 
Here $*$ is Hodge star for the Riemannian metric $\hh_2$ uniquely
determined by (i) the formula
\[\ts \hh_2(X,Y)\upsilon =
\frac16(X\ip\varphi)\we(Y\ip\varphi)\we\varphi,\]
and (ii) the condition that $\upsilon$ be the volume form of $\hh_2$
with an appropriate orientation choice \cite{Br1}. It is then the case
that $\nabla\varphi=0,$ where $\nabla$ is the Levi-Civita connection
for $\hh_2$.

An analogous description of $SU(3)$ (`Calabi-Yau') holonomy consists
of a symplectic 2-form $\si$ and a complex closed 3-form
$\Psi=\psi^++i\psi^-$ satisfying $\Psi\we\si=0$ and
\[\ts -i\,\Psi\we\ol\Psi = \frac43\,\si^3\]
in the notation of \cite{ChS}. The real and imaginary components of
$\Psi$ must be \emph{stable} in the sense that the stabilizer of
either in $GL(6,\R)$ is conjugate to $SL(3,\C),$ in order that
$\psi^+$ determine an almost complex structure $J$ and
$\psi^-=J\psi^+$ \cite{Hit}. The 2-form $\si$ will necessarily have
type $(1,1)$ relative to $J,$ but we also require that the
non-degenerate bilinear form $\gg_2=\si(J\cdot,\cdot)$ be positive
definite.

More general $SU(3)$ structures are defined by merely relaxing the
closure conditions on $\si$ and/or $\psi^\pm$. When an $SU(3)$
structure arises as a hypersurface of a manifold with holonomy $\2,$
its torsion $\fT$ `loses' half of its 42 components. For such an
embedding $M^6\hookrightarrow M^7,$ one defines
\[ \left\{\>\ba{rcl} 
\psi^+ &=& i^*\varphi,\y \frac12\si^2 &=& i^*(\vasphi),\ea\right.\]
and it is the closure of these two differential forms {\gr that is} the
half-flat condition.

Now consider the quotient situation. Suppose $(M^7,\varphi)$ has
holonomy in $\2$ and that $\1$ acts freely on $M^7$ with associated
Killing vector field $X$. Then $\sL_X\varphi=0$ and
\[ \si=X\ip\varphi\]
is closed. Let $\ep$ be the positive function defined by
\be{s} \ep^{-4}=\hh_2(X,X),\ee
so that $\ep^{-2}=\|X\|$ measures the size of the $\1$ fibres. Let
$\th=\ep^4X\ip\hh_2$ so that $X\ip\th=1$. Following \cite{ApoS} (where
$t$ corresponds to $\ep^2$ and the signs of $\psi^\pm$ are swapped),
one can write
\[\ba{rcl} 
\varphi &=& \th\we\si + \ep^3\psi^-,\y 
\vasphi &=& \th\we(\ep\,\psi^+) + \frac12(\ep^2\,\sigma)^2. 
\ea\]
The 1-form $i\th$ defines a connection on the $\1$ bundle, and
$F=d\th$ equals ($-i$ times) its curvature. The latter is constrained
by the residual torsion:

\begin{lemma}\label{FF}
The differential forms $\si$ and $\Psi=\psi^++i\psi^-$ define an
$SU(3)$ structure on $M^7/\1$ with $d\si=0$ and $d(\ep\psi^+)=0$.
Moreover,
\[ \ba{rcl} 
F\we\si    &=& -d(\ep^3\psi^-)\y
F\we\psi^+ &=& -2\ep^2\,d\ep\we\si^2.\ea\] 
\end{lemma}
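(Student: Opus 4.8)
The plan is to derive everything by differentiating the two decomposition formulae
\[
\varphi = \th\we\si + \ep^3\psi^-,\qquad
\vasphi = \th\we(\ep\,\psi^+) + \tfrac12(\ep^2\si)^2,
\]
together with the two closure conditions $d\varphi=0$ and $d\vasphi=0$ coming from the $\2$ holonomy, and the already-observed facts $d\si=0$ (since $\si=X\ip\varphi$ with $\sL_X\varphi=0$ and $d\varphi=0$) and $F=d\th$. Since all the forms in sight are basic for the $\1$-action (they are contracted appropriately with $X$, and the structure is $\1$-invariant), $d$ on $M^7$ restricts to the exterior derivative on the quotient $M^7/\1$ plus a ``$\th\we$'' term with the connection, which is exactly what produces the curvature $F$.

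First I would establish the two assertions $d\si=0$ and $d(\ep\psi^+)=0$: the former is immediate as above, and the latter follows by a parallel argument applied to $\vasphi$. Indeed $\sL_X(\vasphi)=0$ and $d(\vasphi)=0$ give $d(X\ip\vasphi)=0$; computing $X\ip\vasphi$ from the decomposition and using $X\ip\th=1$, $X\ip\si=0$, $X\ip\psi^\pm=0$ yields $X\ip\vasphi = \ep\,\psi^+$, so $d(\ep\psi^+)=0$. For the curvature identities, I would differentiate $\varphi=\th\we\si+\ep^3\psi^-$: since $d\varphi=0$ and $d\si=0$,
\[
0 = d\th\we\si - \th\we d\si + d(\ep^3\psi^-)
  = F\we\si + d(\ep^3\psi^-) - \th\we d\si,
\]
and $d\si=0$ kills the last term, giving $F\we\si=-d(\ep^3\psi^-)$. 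Similarly, differentiating $\vasphi = \th\we(\ep\psi^+) + \tfrac12(\ep^2\si)^2$ and using $d\vasphi=0$, $d(\ep\psi^+)=0$, $d\si=0$ gives
\[
0 = F\we(\ep\psi^+) + d\!\left(\tfrac12\ep^4\si^2\right)
  = \ep\,F\we\psi^+ + 2\ep^3\,d\ep\we\si^2,
\]
whence $F\we\psi^+ = -2\ep^2\,d\ep\we\si^2$ after dividing by $\ep>0$.

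The one genuinely substantive point — the part I expect to be the main obstacle — is justifying that in these computations $d$ really does act as claimed, i.e.\ that the terms produced are precisely $F\we\si$ and $F\we\psi^+$ with no extra contributions hidden in $\th\we(\cdots)$. This requires knowing that $\si$ and $\ep\psi^+$ are genuinely \emph{basic} forms (horizontal and $\1$-invariant), so that their exterior derivatives on $M^7$ contain no $\th$-component beyond what is displayed; equivalently, one must check $X\ip d\si=0$ and $X\ip d(\ep\psi^+)=0$, which follow from Cartan's formula $\sL_X=d\,X\ip + X\ip\,d$ applied to $\si$ and $\ep\psi^+$ together with $\sL_X$-invariance of the whole $\2$ structure and the vanishing of $X\ip\si$, $X\ip\psi^+$. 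Once this bookkeeping is in place, the two curvature identities drop out by the straightforward differentiations above, and it only remains to note that $(\si,\Psi)$ satisfies the algebraic $SU(3)$ relations $\Psi\we\si=0$ and $-i\,\Psi\we\ol\Psi=\tfrac43\si^3$ on the quotient because these are pointwise consequences of the corresponding $\2$ identities restricted to the horizontal distribution — this is the half-flat reduction already recalled in the excerpt — completing the proof.
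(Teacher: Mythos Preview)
Your proposal is correct and follows essentially the same approach as the paper: the paper's proof simply asserts that ``the equations involving $F$ follow immediately by differentiation'' of the two decomposition formulae for $\varphi$ and $\vasphi$, and you have carried out exactly those differentiations, together with the same argument $X\ip\vasphi=\ep\psi^+$ for the closure of $\ep\psi^+$. Your extra discussion about basicness is careful bookkeeping that the paper leaves implicit, and the paper in turn spends its proof explaining why the particular powers of $\ep$ appear in the decomposition (a normalization point you simply take as given), but the substance of the argument is the same.
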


\begin{proof}
The required algebraic properties of the exterior forms follow from
the well-known linear algebra linking $SU(3)$ and $\2$ structures, so
we confine ourselves to understanding the exponents of $\ep$ in the
expressions for $\varphi$ and $\vasphi$ above the lemma. We want the
four terms to have constant norm relative to $\hh_2$ on $M^7$. This
implies that $\ep^2\si$ should have constant norm, since $\|\ep^{-2}\th\|=1,$
and the 3-form $\psi^-$ is scaled by $(\ep^2)^{3/2}=l^3$ for
consistency. For the same reason, $(\ep^{-2}\th)(\ep^3\psi^+)$ has
constant norm.

Since $X\ip\vasphi=\ep\,\psi^+,$ the latter is indeed closed. The
equations involving $F$ follow immediately by differentiation.
\end{proof}

\begin{remark}\label{scale}\rm
One is free to scale the metric induced on $M^7/\1$ by any function of
$\ep,$ and the property $d\si=0$ characterizes the choice of an
\emph{almost K\"ahler} metric. However, the metric $\gg_2$ for which
\[(M^7,\hh_2)\lra(M^7/\1,\gg_2)\] is a Riemannian submersion
corresponds to the re-scaled $SU(3)$ structure $(\ep^2\si,\>\ep^3\Psi)$. The
almost complex structure $J$ is integrable if and only if $d(\ep\psi^-)=0,$ and
in this case it was shown in \cite{ApoS} that (i) the Ricci form of the
K\"ahler metric equals $\gr i\pd\ol\pd\log(l^2),$ and (ii) a new Killing vector
field $U$ is defined by $U\ip\sigma=-d(\ep^2),$ and one can further quotient to
4 dimensions.  Other reductions leading to triples of 2-forms and
Monge-Amp\`ere equations can be imposed with extra symmetry \cite{Don}.
\end{remark}

We shall rely repeatedly on the first part of Lemma \ref{FF} in the
sequel, though the function $N=\ep^{-4}$ will be more relevant
computationally, and we shall only use the symbol $\ep$ in this
section. We conclude it by applying the theory above to a $\1$
quotient of the flat $\2$ structure on $\R^7,$ specified by means of
the constant 3-form \be{c3f}
\varphi=dx_{014}-dx_{234}+dx_{025}-dx_{315}+dx_{036}-dx_{126}+dx_{456},\ee
using coordinates $\x_i$ with $0\le i\le 6$. This defines an inclusion
$\2\subset SO(7),$ for which the orthogonal group fixes the Euclidean
metric $\eu=\sum_{i=0}^6\!dx_i^2$. We further distinguish the subspace
$\R^4=\R^4_{0123},$ and consider the action of $\1$ on this subspace
giving rise to the Killing vector field
\[ X = -\x_1\pd_0+\x_0\pd_1-\x_3\pd_2+\x_2\pd_3,\]
where $\pd_0=\pd/\pd x_0$ etc. There is an associated 1-form
\[\xi = X^\flat = X\ip e 
      = -\x_1d\x_0+\x_0d\x_1-\x_3d\x_2+\x_2d\x_3,\]
and
\[\ts \u_0 = X\ip \xi = \suml_{i=0}^3 x_i{}\!^2\] 
is the norm squared of both $X$ and $\xi$. If we set $\th=\xi/u_0,$
then $i\th$ is a connection form for the smooth circle bundle over
$\R^4\smz$.

We next identify $\R^4\cong\H$ by means of the quaternionic coordinate
\[ q=\x_0\+\x_1i\+\x_2j\+\x_3k.\] 
A hyperk\"ahler structure on $\R^4_{0123},$ specified by the anti-self-dual
(ASD) 2-forms
\be{ASD} d\x_{01}-d\x_{23},\quad d\x_{02}-d\x_{31},\quad d\x_{03}-d\x_{12},\ee
where $d\x_{ij}$ is shorthand for $\x_i\we d\x_j$. The action of $\1$ is
triholomorphic. The associated moment mapping is
\be{assoc}\gr q\ \lmt\ -\half\ol q\,i\,q = -\half(\u_1i-\u_3j+\u_2k),\ee
where (to suit the authors' conventions, cf.\ \eqref{UV})
\[\left\{\ \ba{rcl}
\u_1 &=& x_0^2 + x_1^2 - x_2^2 - x_3^2\y
\u_2 &=& 2(\x_0\x_2 + \x_1\x_3)\y
\u_3 &=& 2(\x_0\x_3 - \x_1\x_2).
\ea\right.\]
The map \eqref{assoc} is invariant by the $\1$ action $q\mapsto e^{i\theta}q,$
and defines a homeomorphism $\R^4/\1\cong\R^3$.

The curvature can be expressed in terms of the basis \eqref{ASD} and the
self-dual curvature form $d\xi=2(d\x_{01}+d\x_{23})$:
\[ d\th = \frac1{u_0}d\xi-\frac1{u_0^2}(\u_1\om_1-\u_3\om_2+\u_2\om_3).\]
The $\u_i$ provide a smooth structure on $\R^4,$ and we can express
$\1$ invariant quantities in terms of these coordinates. In
particular,
\[ u_0^2 = u_1^2+u_2^2+u_3^2,\]
so that $\u_0$ is the radius and $u_0^{-1}$ is harmonic in the $\u_i$
coordinates. The Euclidean metric on $\R^4$ can then be recovered by
means of the Gibbons-Hawking ansatz; it is
\[ \u_0\,\th^2 + \qart u_0^{-1}\sum_{i=1}^3\!du_i^2.\]
The second summand on the right equals the metric induced submersively
on the quotient $\R^3$. The factor of $1/4$ (and $1/2$ in the
lemma below) could be eliminated by halving the coordinates $\u_i,$
but that would be inconvenient later.

\begin{lemma}\label{star}
\[\ts d\th = \frac12u_0^{-3}\big(\u_1\,d\u_2\we d\u_3
     +\u_2\,d\u_3\we d\u_1+\u_3\,d\u_1\we d\u_2\big).\]
\end{lemma}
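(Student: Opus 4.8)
The plan is to compute $d\th$ via the Gibbons--Hawking ansatz --- which is precisely how the flat metric displayed just above the lemma was written down --- with a symmetry argument giving an independent derivation of the shape of the answer.

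First observe that $d\th$ is \emph{basic}: from $X\ip\th=1$ we get $X\ip d\th=\sL_X\th-d(X\ip\th)=0$, and $\sL_X d\th=0$, so $d\th$ is pulled back from a closed $2$-form on $\R^3\smz=(\R^4\smz)/\1$, where the target carries the coordinates $(u_1,u_2,u_3)$ and --- by the displayed formula for the Euclidean metric --- the flat metric $\qart\suml_{i=1}^3 du_i^2$. The Gibbons--Hawking ansatz says the curvature of this connection is $*\,dV$, where $V=u_0^{-1}$ is the potential (harmonic in the $u_i$, as already noted) and $*$ is the Hodge star of the base metric $\qart\suml du_i^2$. Hence the whole computation is one differentiation and one Hodge star: $u_0^2=u_1^2+u_2^2+u_3^2$ gives $u_0\,du_0=\suml u_i\,du_i$, so $dV=-u_0^{-3}\suml u_i\,du_i$; and for the constant metric $\qart\suml du_i^2$ one has $*\,du_i=\half\,du_j\we du_k$ on cyclic indices, the $\half$ being $\sqrt{\qart}$. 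Multiplying out, $d\th=*\,dV=\half\,u_0^{-3}\big(u_1\,du_2\we du_3+u_2\,du_3\we du_1+u_3\,du_1\we du_2\big)$, which is the claim, the orientation being chosen so the curvature is $+*\,dV$; note the coefficient $\half$ is exactly the Jacobian $\sqrt{\qart}$ of the base metric.

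For an independent check that $d\th$ must be a constant times the solid-angle form $u_0^{-3}\suml u_i\,du_j\we du_k$, invoke symmetry: right quaternion multiplication $q\mapsto q\bar h$ ($|h|=1$) commutes with the $\1$-action, is a Euclidean isometry, hence preserves $\th$ and $d\th$, and under the moment map \eqref{assoc} acts on $(u_1,u_2,u_3)$ as the standard $\3$. Since invariant $1$-forms on $\R^3\smz$ have the shape $a(u_0)\,du_0$ and are therefore closed, the space of $\3$-invariant closed $2$-forms injects into $H^2(\R^3\smz)\cong\R$, and, containing the solid-angle form, equals its span. So $d\th=c\,u_0^{-3}\suml u_i\,du_j\we du_k$, and at $q=1$ (where $u_0=u_1=1$, $u_2=u_3=0$, $du_2=2\,dx_2$, $du_3=2\,dx_3$) a one-line computation from $\th=\xi/u_0$ gives $d\th=2\,dx_2\we dx_3$, while $u_0^{-3}\suml u_i\,du_j\we du_k=du_2\we du_3=4\,dx_2\we dx_3$; hence $c=\half$.

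\textbf{Main obstacle.} On either route the one delicate point is this constant $\half$: pinning down the Hodge-star/Jacobian factor in the Gibbons--Hawking ansatz, equivalently proving that $\3$-invariant closed $2$-forms on $\R^3\smz$ form a one-dimensional family. A purely computational alternative --- expand each $du_j\we du_k$ in the basis $\{dx_i\we dx_j\}$, split into self-dual and anti-self-dual parts, form the cyclic sum, and verify that the ``non-radial'' self-dual terms cancel while the rest assembles into $2u_0^3\,d\th$ --- also works, but replaces the conceptual point by a routine quartic cancellation.
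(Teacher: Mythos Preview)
Your argument is correct and follows the same Gibbons--Hawking route the paper indicates: the paper does not give a formal proof but simply remarks that the right-hand side equals $-\half*d(u_0^{-1})$ (with $*$ taken for the standard metric $\sum du_i^2$), calling the result well known. Your $\3$-symmetry argument together with the evaluation at $q=1$ is a genuine addition and is the cleanest way to pin down the factor $\half$ that the paper leaves to general knowledge.

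One small slip in the first route: with the standard orientation $du_1\we du_2\we du_3>0$ and the base metric $\qart\sum du_i^2$, your own formula $*du_i=\half\,du_j\we du_k$ applied to $dV=-u_0^{-3}\sum u_i\,du_i$ gives $*dV=-\half\,u_0^{-3}(\ldots)$, not $+\half$; the correct Gibbons--Hawking relation here is $d\th=-*dV$, matching the paper's $-\half*d(u_0^{-1})$. Your hedge about orientation and, more importantly, the independent point check at $q=1$ recover the right sign, so the conclusion is unaffected.
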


This result is well known (up to the factor of $1/2$), since the
right-hand side equals
\[ \half u_0^{-3}\,*\!(\u_1d\u_1+\u_2d\u_2+\u_3d\u_3) = 
-\half\,*d(u_0^{-1}).\] In the sequel, we shall denote this expression
by $\qart u_0^{-3}\T\bfu,d\bfu,d\bfu,$ (an extra factor of 2 converts
the $1/4$ into $1/2,$ see Notation \ref{triple}). Such triple
products will be used to express various tensors. If we restrict to
the 2-sphere $\u_0=1$ and adopt spherical coordinates $\phi$
(latitude) and $\th$ (longitude), then
\[ -2\,d\th = \cos\phi\ d\th\we d\phi\] 
is the area 2-form, whose integral equals $4\pi$. It follows that the
circle bundle has first Chern class $\chern_1=-1$ over $S^2$; it is
the Hopf bundle. For contrasting applications of the Gibbons-Hawking
ansatz in four dimensions, see \cite{LeB,deB}.

Thus far, we have dealt only with 4-dimensional geometry. Given that
$\1$ acts trivially on $\R^3_{456},$ the quotient of $\R^7$ is
\[ \frac{\R^4}{\1}\times\R^3\ \cong\ \R^6,\] 
with coordinates $(\u_1,\u_2,\u_3;\x_4,\x_5,\x_6)$. We we can now
identify the structure induced from the 3-form \eqref{c3f}:

\begin{proposition}\label{flat} 
The quotient $\R^6$ has an induced $SU(3)$ structure with $\u_0=\ep^{-4},$
\[\ba{rcl}
\si  &=& -\frac12(d\u_1\we d\x_4-d\u_3\we d\x_5+d\u_2\we d\x_6),\yy
\Psi &=& -(\frac12\ep\,d\u_1+i\ep^{-1}d\x_4)\we
(-\frac12\ep\,d\u_3+i\ep^{-1}d\x_5)\we(\frac12\ep\,d\u_2+i\ep^{-1}d\x_6).\ea\]
\end{proposition}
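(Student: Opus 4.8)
The plan is to compute $\si = X\ip\varphi$ and the half-flat data $\psi^\pm$ directly from the constant $3$-form \eqref{c3f} and the explicit Killing field $X = -x_1\pd_0+x_0\pd_1-x_3\pd_2+x_2\pd_3$, then rewrite everything in the invariant coordinates $(\u_1,\u_2,\u_3)$ using the quadratic substitution preceding the statement. First I would contract $X$ into the seven monomials of $\varphi$. Only the terms containing a $dx_i$ with $i\in\{0,1,2,3\}$ survive in a form that mixes with $\R^3_{456}$; collecting them gives a $2$-form on $\R^4_{0123}\op\R^3_{456}$ with `leg' in each factor, plus a purely $4$-dimensional piece. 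The key computation is to check that $d(\tfrac12 u_1) = x_0\,dx_0+x_1\,dx_1-x_2\,dx_2-x_3\,dx_3$ and similarly $d(\tfrac12 u_2) = x_2\,dx_0+x_3\,dx_1+x_0\,dx_2+x_1\,dx_3$, $d(\tfrac12 u_3) = x_3\,dx_0-x_2\,dx_1-x_1\,dx_2+x_0\,dx_3$. These are exactly the three $1$-forms obtained by contracting $X$ into the ASD $2$-forms \eqref{ASD} (up to sign), so the cross-terms $X\ip\varphi$ reorganize into $\pm\tfrac12 d\u_i\we dx_j$ with the index pairing $(1,4),(3,5),(2,6)$ dictated by the monomials $dx_{014},dx_{315},dx_{126}$ that contain both a $0123$-index and a $456$-index. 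The purely $4$-dimensional contribution of $X\ip\varphi$ comes from $dx_{456}$ (which dies, as $X$ has no $456$-component) and from the three ASD terms inside $\varphi$; but those ASD $2$-forms, contracted with $X$, produce the $d\u_i$ already accounted for, so $\si$ is precisely the displayed expression. This also matches the Gibbons-Hawking $\si = \qart u_0^{-1}\,\cdots$ identification only after the rescaling of Remark \ref{scale}, but here we want the unscaled almost-Kähler $\si$, which is why the coefficient is the clean $-\tfrac12$ with no $u_0$ factor.

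Next I would extract $\psi^- = \ep^{-3}(\varphi - \th\we\si)$ from the formula $\varphi = \th\we\si + \ep^3\psi^-$ above Lemma \ref{FF}, with $\ep^{-4}=\hh_2(X,X) = u_0$ here since on flat $\R^7$ the metric is Euclidean and $\|X\|^2 = u_0$. One checks $\th\we\si$ absorbs exactly the monomials of $\varphi$ that involve $\xi = X^\flat$, leaving a $3$-form which, re-expressed via the $d\u_i$, is the imaginary part of the stated $\Psi$. The cleanest route is to verify the factorization: expand the claimed product
\[
\Psi = -\bigl(\ts\frac12\ep\,d\u_1+i\ep^{-1}dx_4\bigr)\we\bigl(-\ts\frac12\ep\,d\u_3+i\ep^{-1}dx_5\bigr)\we\bigl(\ts\frac12\ep\,d\u_2+i\ep^{-1}dx_6\bigr),
\]
separate real and imaginary parts, and confirm that $\Re\Psi$ equals $i^*\varphi$ restricted appropriately (i.e.\ $\psi^+$, the purely-real part with either two `$\ep$' legs cancelling in powers or the all-$\ep^{-1}$ term) and $\Im\Psi = \ep^{-3}(\varphi - \th\we\si)$. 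The powers of $\ep$ work out because the $d\u_i$ carry an $\ep$ and the $dx_{4,5,6}$ carry an $\ep^{-1}$: the three cross-type terms each have net $\ep^{1}\cdot\ep^{-2}=\ep^{-1}$ matching $\ep\,\psi^+$ in $\vasphi$, while the all-$d\u$ term has $\ep^3$ matching $\ep^3\psi^-$ and the all-$dx$ term has $\ep^{-3}$, which is the other half of $\psi^-$. Finally one records $\u_0 = \ep^{-4}$, consistent with \eqref{s}, and notes closure of $\si$ and of $\ep\,\psi^+ = X\ip\vasphi$ follows from Lemma \ref{FF} (the flat $\2$ form is closed), so no separate verification is needed.

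The main obstacle is purely bookkeeping: getting the seven sign conventions of \eqref{c3f} to propagate correctly through the contraction, the index re-pairing $(1,4),(3,5),(2,6)$, and the $\ol q\,i\,q$ moment-map signs in \eqref{assoc}, so that the final $\Psi$ comes out with the exact signs $(+\frac12, -\frac12, +\frac12)$ on the $d\u_1,d\u_3,d\u_2$ legs and the stated overall minus sign. I would organize this by first fixing the orientation and volume form $\upsilon$ so that $\hh_2$ from the Bryant formula is the standard Euclidean one, then tabulate $X\ip dx_{ijk}$ for each of the seven monomials in a single array, and only then assemble $\si$, $\psi^-$, and check the $\Psi$-factorization and the normalization $-i\Psi\we\ol\Psi = \frac43\si^3$ as a consistency test. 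The underlying geometry—that the $\1$-reduction of flat $\2$ on $\R^4\ti\R^3$ is flat $SU(3)$ on $(\R^4/\1)\ti\R^3 = \R^3\ti\R^3$ via Gibbons-Hawking—is already established in the preceding discussion, so the proposition is really just the identification of the specific forms, and the proof should be short once the sign table is in place.
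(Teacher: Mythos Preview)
Your computation of $\si = X\ip\varphi$ is correct and matches the paper's. For $\Psi$ the paper takes a different and shorter route: rather than extracting $\psi^-$ from $\varphi = \th\we\si + \ep^3\psi^-$ (which forces you to compute $\th\we\si$ and then re-express it in the $d\u_i$), it extracts $\psi^+$ directly from the 4-form via $X\ip\vasphi = \ep\psi^+$. Writing down $\vasphi$ for the flat structure and contracting with $X$ yields
\[
\ep\psi^+ \;=\; \tfrac12\bigl(d\u_1\we dx_{56} - d\u_3\we dx_{64} + d\u_2\we dx_{45}\bigr) - \tfrac18\ep^4\, d\u_{123},
\]
which one recognises as $\ep$ times the real part of the displayed $\Psi$; since the stable form $\psi^+$ determines $J$, the identification $\Psi=\psi^++i\psi^-$ follows without ever computing $\psi^-$ separately. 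Your approach via $\varphi-\th\we\si$ would also work, but it is longer and more delicate.

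Two slips in your bookkeeping are worth flagging. First, your parity count is inverted: in the product of three factors of the form $a+ib$, the real part picks up zero or two $b$'s, so the all-$d\u$ term (zero imaginary factors, weight $\ep^3$) lies in $\psi^+$, not $\psi^-$, and the (one-$d\u$, two-$dx$) terms are also in $\psi^+$; it is the (two-$d\u$, one-$dx$) and all-$dx$ terms that make up $\psi^-$. Second, writing ``$\Re\Psi = i^*\varphi$'' invokes the hypersurface formula from earlier in Section~\ref{GH}, not the quotient one; here there is no inclusion $i$, and $\psi^+$ comes from $\vasphi$ rather than from $\varphi$. Neither slip is fatal, but both illustrate why the paper's $X\ip\vasphi$ route is the cleaner choice.
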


\begin{proof}
The $SU(3)$ structure is completely determined by $\si$ and $\psi^+$. The first
equation follows from the definition $\si=X\ip\varphi$. {\gr In accordance with
  the way the coordinates $\u_i$ are ordered in \eqref{assoc}, we also have}
\[\ba{rl} \ep\psi^+ \kern-6pt
&= X\ip\vasphi\yy 
&= \frac12(d\u_1\we d\x_{56}-d\u_3\we d\x_{64}+d\u_2\we d\x_{45})
-\x_0d\x_{023}-\x_1d\x_{123}-\x_2d\x_{012}-\x_3d\x_{013}\yy 
&= \frac12(d\u_1\we d\x_{56} - 
d\u_3\we d\x_{64}+d\u_2\we d\x_{45})-\frac18\ep^4\,d\u_{123}.\ea\] 
The last line equals $l$ times the real part of the simple 3-form
$\Psi$ specified by the proposition. Since the stable form $\psi^+$
determines $J,$ it follows that $\Psi=\psi^++i\psi^-$ is a $(3,0)$
form compatible with $\si$.
\end{proof}

Although the starting metric $\eu$ is flat, the circle bundle is not,
and the torsion $\fT$ of the $SU(3)$ structure above is determined by
$d(\ep\psi^-)\ne0$. This confirms that the quotient is not K\"ahler, but
provides results that are entirely consistent with Lemma \ref{FF}. We
purposely chose a circle subgroup that acts trivially on $\R^3,$
though other $SU(3)$ structures can be defined by a different choice
of
\[ \1\subset SO(4)\subset\2\]
acting on $\R^7\cong\R^4\op\La^2_-(\R^4)$. Proposition \ref{flat}
exhibits the simplest model for quotients of metrics with holonomy
$\2,$ including the one on $\sC$ that is the main focus of this
paper. The equations to set up the quotient are identical, but we
shall inevitably struggle to find such simple expressions for the
induced differential forms in a non-flat situation.

\setcounter{equation}0
\section{Metrics with holonomy $\2$}\label{holo}

At this point, we need to refresh notation, and establish our choice
of real, complex and quaternionic coordinates on
\[ \R^8=\C^4=\H^2\]
that will persist for the remainder of the paper.  We shall consider
$\R^8$ as a module for the Lie group $Sp(2)Sp(1),$ with the group
$Sp(2)$ of quaternionic matrices acting by \emph{left} multiplication,
and the group $Sp(1)$ of unit quaternion scalars acting on the
\emph{right}. We set
\be{qq}\ba{lll}
\q_0 &=\ \x_0+\x_1i+\x_2j+\x_3k &=\ \z_0+j\z_1,\y
\q_1 &=\ \x_4+\x_5i+\x_6j+\x_7k &=\ \z_2+j\z_3,
\ea\ee
so that
\be{zzzz}
\z_0=\x_0+i\x_1,\quad \z_1=\x_2-i\x_3,\quad 
\z_2=\x_4+i\x_5,\quad \z_3=\x_6-i\x_7.\ee 
Then $\z_i$ and $\q_j$ become homogeneous coordinates for $\CP^3$ and
$\HP^1$ respectively, consistent with the choice of \emph{right}
multiplication by $\H^*$.

The twistor projection $\pi\colon\CP^3\to\HP^1$ is represented by
\[[\z_0,\z_1,\z_2,\z_3]\mapsto[\q_0,\q_1]=[1,\>\q_1q_0^{-1}],\]
in which the point at infinity is defined by $\q_0=0$. 
Away from this point,
\be{pi}\ba{rcl}\ds \q_1q_0^{-1}\ =\ \frac1{|q_0|^2}\q_1\ol{\q_0}
&=&\ds \frac1{|z_0|^2+|z_1|^2}(\z_2+j\z_3)(\oz_0-j\z_1)\yy &=&\ds
\frac1{|z_0|^2+|z_1|^2}\Big( \z_2\oz_0+\oz_3\z_1 + \gr j(\oz_0\z_3-\z_1\oz_2)\Big).
\ea\ee
This convention will determine the chirality of the Killing vector fields
defined below.

We denote by
\[\ts R=\suml_{i=0}^7 x_i^2\]
the radius squared for the Euclidean metric
\[\ts \eu=\suml_{i=0}^7 dx_i\ot dx_i.\]
The vector fields $\pd_0\=\pd/\pd x_1,\ldots,\pd_7\=\pd/\pd x_7$ constitute an 
orthonormal basis for $e$.

The right action of $Sp(1)$ determines Killing vector fields
\[\ba{l} 
Y_1 = -\x_1\pd_0+\x_0\pd_1+\x_3\pd_2-\x_2\pd_3-\x_5\pd_4+\x_4\pd_5+\x_7\pd_6-\x_6\pd_7\y
Y_2 = -\x_2\pd_0+\x_0\pd_2+\x_1\pd_3-\x_3\pd_1-\x_6\pd_4+\x_4\pd_6+\x_5\pd_7-\x_7\pd_5\y
Y_3 = -\x_3\pd_0+\x_0\pd_3+\x_2\pd_1-\x_1\pd_2-\x_7\pd_4+\x_4\pd_7+\x_6\pd_5-\x_5\pd_6,
\ea\]
tangent to the fibres of the principal bundle
\[ S^7=\frac{Sp(2)}{Sp(1)}\lra\frac{Sp(2)}{Sp(1)\ti Sp(1)}=S^4.\] 
With the conventions below, we shall identify the associated rank 3
vector bundle with the bundle $\LaS$ of \emph{anti-self-dual} 2-forms.

Consider the 1-forms $\al_i=Y_i\ip g,$ such as
\[ \al_1 = -\x_1d\x_0+\x_0d\x_1+\x_3d\x_2-\x_2d\x_3
-\x_5d\x_4+\x_4d\x_5+\x_7d\x_6-\x_6d\x_7.\]
Observe that 
\[ \half d\al_1 = d\x_{01}-d\x_{23}+d\x_{45}-d\x_{67}\]
is the sum of two ASD 2-forms on separate $\R^4$'s. Now consider the
action by the group $\R^+$ of positive real scalars on $\R^8,$ so that
$\x_i\mapsto\la\x_i$ for $\la\in\R^+$. The 1-forms
\[ \hal_i = \frac{\al_i}R\] are invariant by this action, and (by
interpreting $Y_i$ as elements of $\mathfrak{so}(3)$)
\[ \vartheta=\sum_{i=1}^3\hal_i\ot Y_i\] is a
connection form on the principal $\H^*$ bundle. Its curvature equals
\[\ba{rcl}
 d\vartheta+[\vartheta,\vartheta]
&=& (d\hal_1+2\hal_2\we\hal_3)\ot Y_i +\cdots\yy
&=& \suml_{i=1}^3 \tu_i\ot Y_i,\ea\]
where
\[\left\{\ba{rcl}
\tu_1 &=& d\hal_1+2\hal_{23}\y
\tu_2 &=& d\hal_2+2\hal_{31}\y
\tu_3 &=& d\hal_3+2\hal_{12}\ea\right.\] 
We can check that the coefficients and signs are correct by verifying
that
\[ Y_i\ip\tu_j=0\qquad i,j=1,2,3,\] 
which follows from equations such as $Y_i\ip\hal_i=1$ and $Y_1\ip
d\hal_2=2\hal_3$. This implies that the $\tu_i$ are semi-basic over
$S^4$. Bearing in mind that
\[\tu_i\we\tu_i\we\al_{123}\we dR = -16\,d\x_{01234567},\]
we shall {\gr choose} orientations on $S^7$ and $S^4$ so that
$\{\tu_1,\tu_2,\tu_3\}$ a basis of \emph{anti}-self-dual 2-forms over $S^4$.

Fix $i=1$ and consider the subgroup $\1_1$ generated by $Y_1$. This
will be our choice for defining the complex projective space
\[\CP^3 = \frac{S^7}{\1_1}.\]
The 1-form $\hal_1$ determines a connection on the $\1_1$ bundle
$S^7\to \CP^3,$ with curvature 2-form proportional to $d\hal_1$. The
rescaling of $\al_1$ ensures that
\[ Y_1\ip d\hal_1 = \sL_{Y_1}\hal_1-d(Y_1\ip\hal_1)=0,\] 
so that $d\hal_1$ passes to $\CP^3$. It is well known that $d\hal_1$
is the K\"ahler form for (a suitably normalized) Fubini-Study metric
on $\CP^3$. We denote the standard integrable complex structure on
$\CP^3$ by $J_1$.

The nearly-K\"ahler structure of $\CP^3$ is compatible with the non-integrable
almost complex structure $J_2,$ obtained from $J_1$ by reversing sign on the
twistor fibres \cite{ES}. We shall denote the $SU(3)$ structure on
$(\CP^3,J_2)$ by a 2-form $\om$ and a $(3,0)$-form $\Up,$ neither of which are
closed. (We shall reserve $\si$ and $\Psi$ to describe the $SU(3)$ structure of
{\gr the quotient $\R^+\times\CP^3$ by $SO(2),$} see the Glossary of Notation
on page \pageref{gloss}.)

\begin{lemma}\label{NK}
\[\ba{rcl} 
\om &=& \hal_{23}+\tu_1\ =\ d\hal_1+3\hal_{23}\yy
\Up &=& (\hal_2+i\hal_3)\we(\tu_2-i\tu_3)\\[-5pt]
\ea\]
\end{lemma}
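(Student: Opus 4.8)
My plan is to verify the two displayed formulas for $\om$ and $\Up$ by working directly with the $\R^+\times Sp(1)$-invariant framing of $S^7$ built from the forms $\hal_i$ and their derivatives. First I would recall the structural facts already assembled in the excerpt: $J_1$ is the Fubini--Study complex structure with K\"ahler form $d\hal_1$, and $J_2$ is obtained by reversing sign along the twistor fibres of $\pi\colon\CP^3\to S^4$. Concretely, on $\CP^3$ (i.e.\ on the $\1_1$-basic forms over $S^7$) the cotangent space splits as a $2$-dimensional ``fibre'' piece spanned by $\hal_2,\hal_3$ (the directions of $Y_2,Y_3$, which together with $Y_1$ span the twistor $SU(2)$-fibre, modulo $Y_1$) and a $4$-dimensional ``horizontal'' piece pulled back from $S^4$, on which $\tu_1,\tu_2,\tu_3$ restrict to a standard anti-self-dual basis. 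The operator $J_1$ pairs $\hal_2\leftrightarrow\hal_3$ in one sign and acts on the horizontal $4$-space as the natural (twistorial) complex structure; $J_2$ keeps the horizontal action but flips the sign on the $\langle\hal_2,\hal_3\rangle$ plane.

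Next I would pin down the $(1,0)$-spaces for $J_2$. On the fibre factor, flipping the sign means the $J_2$-holomorphic $(1,0)$-form is $\hal_2+i\hal_3$ (rather than $\hal_2-i\hal_3$); on the horizontal factor, the twistor complex structure makes $\tu_2-i\tu_3$ a $(2,0)$-form of the ASD $2$-sphere's complex structure, so it is of type $(2,0)$ for $J_2$ as well because $J_2$ agrees with $J_1$ there. Hence $(\hal_2+i\hal_3)\wedge(\tu_2-i\tu_3)$ is a nowhere-zero form of type $(3,0)$ for $J_2$, which forces $\Up$ to be a function multiple of it; the normalization constant is fixed by the nearly-K\"ahler compatibility condition $\om\wedge\Up=0$ together with the standard normalization $\tfrac{i}{8}\,\Up\wedge\ol\Up=\tfrac13\om^3$ (or equivalently by matching the induced metric to the Fubini--Study scaling encoded in the $\hal_i$). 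For $\om$, I would write the fundamental $2$-form of $(\CP^3,J_2,h_2)$ as the sum of the fibre part and the horizontal part: the horizontal part is just $\tu_1$ (the ASD K\"ahler form of the fibre $S^2$, in our orientation), while the fibre part, after the sign flip relative to $J_1$, is $\hal_{23}$ with the sign opposite to the one appearing in $d\hal_1$. Using the identity $\tu_1=d\hal_1+2\hal_{23}$ from the curvature computation earlier in Section~\ref{holo}, the expression $\hal_{23}+\tu_1$ rewrites as $d\hal_1+3\hal_{23}$, giving the second equality for $\om$ for free.

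Finally, I would double-check orientation and sign conventions — this is where the real care is needed. The chirality choices are cumulative: the right $Sp(1)$-action was used to orient $S^7$ and $S^4$ so that $\{\tu_1,\tu_2,\tu_3\}$ is an \emph{anti}-self-dual basis, the twistor fibration convention from \eqref{pi} fixes how $J_1$ acts on horizontal vectors, and the ``reverse sign on the twistor fibres'' prescription for $J_2$ must be applied consistently to both $\hal_{23}$ and the choice $\hal_2+i\hal_3$ versus $\hal_2-i\hal_3$. The cleanest way to settle all signs simultaneously is to exhibit one explicit point of $S^7$ (e.g.\ $q_0=1$, $q_1=0$, i.e.\ $x_0=1$ and all other $x_i=0$), compute $\hal_i$, $\tu_i$, $Y_i$ there in the $dx_j$ basis from the formulas given, and verify (i) that $\om$ as claimed equals $h_2(J_2\cdot,\cdot)$ at that point, and (ii) that $\Up$ as claimed is decomposable with the correct unit norm. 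The main obstacle is precisely this bookkeeping of signs and the normalization constant — the geometric content (that $(\CP^3,J_2)$ carries this $SU(3)$ structure) is classical \cite{ES}, but matching it to the particular Euclidean-coordinate framing of this paper without sign errors is the only substantive step.
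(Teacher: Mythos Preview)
Your identification of $\Up$ up to a scalar is sound: the type argument (sign-flip of $J_2$ on the fibre directions $\hal_2,\hal_3$, agreement with $J_1$ on the horizontal piece where $\tu_2-i\tu_3$ has type $(2,0)$) does force $\Up$ to be a constant multiple of $(\hal_2+i\hal_3)\we(\tu_2-i\tu_3)$ by $Sp(2)$-invariance. The gap is in pinning down $\om$. Your sign-flip argument only tells you that the fibre contribution to $\om$ has the \emph{sign} opposite to that in $d\hal_1=\tu_1-2\hal_{23}$; it does not determine the coefficient. You write $\hal_{23}+\tu_1$ directly, but a priori one only knows $\om=\tu_1+\la\hal_{23}$ for some $\la>0$, because the nearly-K\"ahler metric $\wh_2$ rescales the fibres relative to $\wh_1$ (this is made explicit only later, in Proposition~\ref{hc}). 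Your proposed fixes do not close this: the algebraic compatibility $\om\we\Up=0$ is automatic from type, and the volume relation $\tfrac{i}{8}\Up\we\ol\Up=\tfrac13\om^3$ yields only one equation linking $\la$ to the normalization of $\Up$, leaving a one-parameter ambiguity. The pointwise check ``$\om=\wh_2(J_2\cdot,\cdot)$'' is circular here, since in this paper $\wh_2$ is \emph{defined} by the present lemma and has no prior independent description.

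The paper resolves this exactly where your argument stalls: it writes $\om=\tu_1+\la\hal_{23}$ with $\la$ unknown and then verifies that the nearly-K\"ahler differential identities $d\om=3\,\Im\Up$ and $d\Up=2\,\om^2$ hold precisely for $\la=1$ (and for $\Up$ normalized as stated), using that $\tu_i^2$ is independent of $i$. Since those identities are what \emph{characterize} the nearly-K\"ahler structure, this simultaneously fixes all constants and proves the lemma. To complete your proof you should replace the algebraic/pointwise normalization step by this differential check.
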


\begin{proof}
Since
\[ d\hal_1=\tu_1-2\hal_{23}\]
is K\"ahler form on $(\CP^3,J_1),$ we must have
$\om=\tu_1+\la\hal_{23}$ for some $\la>0$. Using the fact that
$\tau_i^2$ defines the same volume form on $S^4$ independent of $i,$
one verifies that the nearly-K\"ahler identities, namely
\[ d\om = 3\,\Im\Up \qbox{and} d\Up = 2\,\om^2,\] 
hold for $\la=1$. The formula for the $(3,0)$-form $\Up$ follows from
the fact that $J_2=-J_1$ when restricted to the fibres over $S^4$.
\end{proof}

If a 6-manifold $M$ carries a nearly-K\"ahler metric then the conical
metric on $\R^+\times M$ has holonomy contained in $\2$. In
particular, when $M$ is the twistor space $(\CP^3,J_2),$ with isometry
group $SO(5),$ the conical metric has holonomy \emph{equal} to $\2$
\cite{BS}. We choose as radial parameter the Euclidean norm squared
\[ R=\|\bfq\|^2=|\q_0|^2+|\q_1|^2;\]
this will ensure that $\pq\circ\rho$ arises from a quadratic map in
the commutative Figure 1.

\begin{proposition}\label{exact} 
The conical $\2$ structure on $\sC$ is characterized by the exact
forms
\[\ts\varphi = d(\frac13R^3\om)\qbox{and} 
\vasphi = d(\qart R^4\,\Re\Up).\]
\end{proposition}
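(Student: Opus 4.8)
The plan is to identify the two exact forms as the familiar $\2$ data of a metric cone, and then read off the Hodge dual directly. I would begin by recalling the standard description of a conical $\2$ structure over a nearly-K\"ahler six-manifold: on $\R^+\ti M^6$ with cone metric $\hh_2=dR^2+R^2\wh_2$ over a nearly-K\"ahler $(M^6,\wh_2)$ whose $SU(3)$ structure is $(\om,\Up),$ the parallel $\2$ 3-form is $\varphi=R^2\,dR\we\om+R^3\,\Im\Up$ and its dual is $\vasphi=\half R^4\om^2+R^3\,dR\we\Re\Up$ (see \cite{BS}, and \cite{Bar} for the associated holonomy reduction). In our case $M^6$ is the twistor space $(\CP^3,J_2),$ whose nearly-K\"ahler $SU(3)$ structure is the $\om,\Up$ of Lemma~\ref{NK}; these are invariant under the $\C^*$ used to form $\sC=\R^+\ti\CP^3,$ so they descend, and $R=\|\bfq\|^2$ descends as well and is taken as the cone radial coordinate, giving $\hh_2=dR^2+R^2\wh_2$.

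For the first formula I would simply differentiate $\tfrac13R^3\om$: using $d\om=3\,\Im\Up$ from Lemma~\ref{NK}, $d(\tfrac13R^3\om)=R^2\,dR\we\om+\tfrac13R^3\,d\om=R^2\,dR\we\om+R^3\,\Im\Up=\varphi.$ In particular $\varphi$ is closed because it is exact; equivalently $d\,\Im\Up=0,$ the imaginary part of $d\Up=2\om^2.$ This is the painless route to holonomy contained in $\2,$ and that it is all of $\2$ is \cite{BS}.

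For $\vasphi$ I would use the Hodge star of the cone metric: for a $p$-form $\beta$ pulled back from $\CP^3$ one has $*(dR\we\beta)=R^{6-2p}\,{*_6}\beta$ and $*\beta=(-1)^pR^{6-2p}\,dR\we{*_6}\beta,$ together with the $SU(3)$ relations ${*_6}\om=\half\om^2$ and ${*_6}\Im\Up=-\Re\Up$ for the orientation fixed in this section. Applied to $\varphi=R^2\,dR\we\om+R^3\,\Im\Up$ these give $\vasphi=R^4\,{*_6}\om-R^3\,dR\we{*_6}\Im\Up=\half R^4\om^2+R^3\,dR\we\Re\Up.$ Finally, differentiating $\tfrac14R^4\Re\Up$ and using $d\Re\Up=2\om^2$ (the real part of $d\Up=2\om^2$): $d(\tfrac14R^4\Re\Up)=R^3\,dR\we\Re\Up+\tfrac14R^4\cdot2\om^2=\vasphi,$ with $d\vasphi=0$ again automatic from exactness (and consistent with $\om\we\Up=0$).

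The substantive part is bookkeeping rather than analysis. One must confirm that the $\C^*$-invariant $\om$ of Lemma~\ref{NK} is normalised to be \emph{exactly} the nearly-K\"ahler 2-form of $\hh_2$ with no stray constant --- which is precisely where using $R=\|\bfq\|^2,$ the Euclidean norm \emph{squared}, directly as the cone coordinate (rather than its square root) enters, and is what produces the exponents $R^3,R^4$ in place of $R^{3/2},R^2$ --- and one must check that the sign in ${*_6}\Im\Up=\pm\Re\Up$ is the one compatible with the orientation conventions on $S^7$ and $S^4$ set earlier. Once those conventions are aligned, the whole content of the proposition is the three short manipulations above.
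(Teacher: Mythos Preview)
Your proof is correct and follows essentially the same route as the paper: both write down the standard cone expressions $\varphi=R^2\,dR\we\om+R^3\,\Im\Up$ and $\vasphi=R^3\,dR\we\Re\Up+\tfrac12R^4\om^2$ and then verify the primitives by differentiating using the nearly-K\"ahler identities $d\om=3\,\Im\Up$ and $d\Up=2\om^2$ from Lemma~\ref{NK}. You supply the extra step of deriving the form of $\vasphi$ via the Hodge star on the cone, which the paper simply takes for granted as part of the standard cone construction; your remark about $\C^*$-invariance and ``descending'' is slightly off in phrasing (the forms live on $\CP^3$ and are pulled back to the cone, not pushed down), but this is cosmetic and does not affect the argument.
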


\begin{proof} 
Each 1-form on $\CP^3$ is weighted with the radial parameter $R$ with
respect to the conical metric, so
\[\ba{rcl}
\varphi &=& dR\we R^2\om+R^3\,\Im\Up,\yy
\vasphi &=& dR\we R^3\,\Re\Up+\frac12(R^2\om)^2.
\ea\]
The formulae follow.
\end{proof}

Let $\wh_1$ denote the K\"ahler metric of $(\CP^3,J_1)$ corresponding
to the 2-form $d\hal_1$. Let $\wh_2$ denote the nearly-K\"ahler metric
of $(\CP^3,J_2)$ determined by Lemma \ref{NK}. The next result
describes the pullbacks of the conical metrics
\be{whc} \hh_c = dR^2 + R^2\,\wh_c,\qquad c=1,2,\ee
to $\R^8,$ in term of the Euclidean metric $e$ and the 1-forms $\al_i$
defined previously. We are mainly interested in $\hh_2$ since this has
holonomy $\2,$ but related metrics will be useful for comparison
purposes.

\begin{proposition}\label{hc} 
  The metrics $\hh_1$ and $\hh_2$ on $\sC$ belong to the one-parameter
  family of bilinear forms
\be{bilinear}
\hh_c = \half dR^2+ 2R\,e-2\al_1^2+(1-c)\big(\al_2^2+\al_3^2\big).
\ee
These forms define Riemannian metrics on $\sC$ provided $c<3$.
\end{proposition}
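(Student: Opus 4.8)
The plan is to identify the right-hand side of \eqref{bilinear}, which I will write $\tilde h_c$, with the $\rho$-pullback of $\hh_c=dR^2+R^2\wh_c$, and then to read off when it is positive definite. I would begin with a handful of Euclidean facts on $\R^8\smz$: the Euler (radial) field $E=\sum_{i=0}^{7}x_i\,\pd_i$ and the Killing fields $Y_1,Y_2,Y_3$ are pairwise $\eu$-orthogonal with $\eu(E,E)=\eu(Y_i,Y_i)=R$; $dR=2\,\eu(E,\cd)$ and $\al_i=\eu(Y_i,\cd)$; and the linear complex structure $I$ of $\C^4$ satisfies $IE=Y_1$, so that $\al_1\circ I=\half\,dR$ and $dR\circ I=-2\al_1$. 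Contracting $\tilde h_c$ with $Y_1$ then gives $Y_1\ip\tilde h_c=0$, while $R,\eu,\al_1,\al_2,\al_3$ are all $Sp(2)Sp(1)$-invariant; hence $\tilde h_c$ is the pullback of an $SO(5)$-invariant symmetric $2$-tensor on $\sC=\R^+\ti\CP^3$, and it remains only to match this tensor with $\hh_c$ for $c=1,2$ and to decide its signature.

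For $c=1$ I would pull $\wh_1$ back along $[\cdot]\colon\R^8\smz\to\CP^3$. Since $\wh_1$ is the K\"ahler metric with K\"ahler form $d\hal_1$, and $I$ descends to $J_1$ on the horizontal distribution of $[\cdot]$, one has $[\cdot]^*\wh_1=d\hal_1(\cd,I\cd)$. Now the formula $\half\,d\al_1=dx_{01}-dx_{23}+dx_{45}-dx_{67}$ of Section \ref{holo} gives $d\al_1(\cd,I\cd)=2\,\eu$, and $d\hal_1=\frac{1}{R}\,d\al_1-\frac{1}{R^2}\,dR\we\al_1$; combining these with the relations for $\al_1\circ I$ and $dR\circ I$ above yields \[ [\cdot]^*\wh_1=\frac{2}{R}\,\eu-\frac{1}{2R^2}\,dR^2-\frac{2}{R^2}\,\al_1^2, \] whence $\rho^*\hh_1=dR^2+R^2[\cdot]^*\wh_1=\half\,dR^2+2R\,\eu-2\al_1^2$, which is \eqref{bilinear} at $c=1$. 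For $c=2$ I would use Lemma \ref{NK}: the twistor fibres of $\CP^3\to S^4$ lift to $\mathrm{span}(Y_2,Y_3)$, on which $J_2=-J_1$ while $J_2=J_1$ on the base directions. Starting from $\wh_2=\om(\cd,J_2\cd)$ with $\om=d\hal_1+3\hal_{23}$, and using $IY_2=-Y_3$, $\hal_2(Y_2)=\hal_3(Y_3)=1$, together with the vanishing of $\hal_{23},\al_1,\al_2,\al_3$ on the base lift, a short check shows $\wh_2=\wh_1-(\hal_2^2+\hal_3^2)$ on $\CP^3$ (the two sides agree on the base block, on the fibre--base cross terms, and on the fibre, where each equals $a^2+b^2$ in the frame $Y_2,Y_3$). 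Rescaling the fibre part of the metric by an arbitrary amount gives the one-parameter $SO(5)$-invariant family $\wh_c=\wh_1+(1-c)(\hal_2^2+\hal_3^2)$; since $R^2\hal_i^2=\al_i^2$, forming $dR^2+R^2\wh_c$ recovers \eqref{bilinear} for every $c$, in particular $c=2$.

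For the positivity claim I would decompose $T(\R^8\smz)$ $\eu$-orthogonally as $\R E\op\R Y_1\op\mathcal V\op\mathcal H_0$, with $\mathcal V=\mathrm{span}(Y_2,Y_3)$ and $\mathcal H_0$ the $\eu$-orthogonal complement of $\mathrm{span}(E,Y_1,Y_2,Y_3)$, so that $\al_1,\al_2,\al_3$ vanish on $\mathcal H_0$ and $dR,\al_1$ vanish on $\mathcal V\op\mathcal H_0$. Then $\tilde h_c(E,E)=2R^2+2R^2=4R^2$, $\tilde h_c$ vanishes on $\R Y_1$, $\tilde h_c|_{\mathcal H_0}=2R\,\eu|_{\mathcal H_0}>0$, and for $v=aY_2+bY_3$ one has $\eu(v,v)=R(a^2+b^2)$ and $(\al_2^2+\al_3^2)(v,v)=R^2(a^2+b^2)$, hence $\tilde h_c(v,v)=(3-c)R^2(a^2+b^2)$; moreover all cross terms among the four blocks vanish, since they pair $\eu$, $dR$ or $\al_i$ across $\eu$-orthogonal subspaces. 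Therefore the radical of $\tilde h_c$ is exactly $\R Y_1$ when $c<3$, and in that range the descended tensor on $\sC$ is positive definite; for $c=3$ it also degenerates along the twistor-fibre directions, and for $c>3$ it becomes indefinite there.

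The step I expect to be the main obstacle is the bookkeeping of normalising constants in the middle paragraph: in particular the relation $d\al_1(\cd,I\cd)=2\,\eu$, which controls the coefficient of $\eu$ in \eqref{bilinear}, and the precise multiple in $\wh_2=\wh_1-(\hal_2^2+\hal_3^2)$ that emerges from $\wh_2=\om(\cd,J_2\cd)$ via Lemma \ref{NK}, which controls the term $(1-c)(\al_2^2+\al_3^2)$. Once these are pinned down, everything else reduces to the routine contractions indicated above.
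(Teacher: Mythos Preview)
Your proposal is correct, and the normalisations you flagged as potential obstacles do work out: $d\al_1(\cdot,I\cdot)=2\eu$ follows from the explicit form of $\half d\al_1$ together with the complex structure determined by \eqref{zzzz}, and the relation $\wh_2=\wh_1-(\hal_2^2+\hal_3^2)$ is exactly what the paper records (in different language) when it observes that the fibre term in the K\"ahler metric is $2(\hal_2^2+\hal_3^2)$ while in the nearly-K\"ahler metric it is $\hal_2^2+\hal_3^2$.

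Your route is genuinely different from the paper's. The paper argues via the round metrics $s_7$ and $s_4$: writing $\eu=dr^2+r^2s_7$ with $R=r^2$ and $2s_7=\pi^*s_4+2\sum\hal_i^2$, it obtains the intermediate form
\[
\hh_c = dR^2 + R^2\pi^*\!s_4 + (3-c)(\al_2^2+\al_3^2),
\]
from which both the identification with \eqref{bilinear} and the positivity for $c<3$ are read off at once. You instead compute $\rho^*\hh_1$ directly from the K\"ahler form via $d\hal_1(\cdot,I\cdot)$, invoke Lemma~\ref{NK} to pass to $\hh_2$, and then check positivity by an explicit $\eu$-orthogonal block decomposition $\R E\oplus\R Y_1\oplus\mathrm{span}(Y_2,Y_3)\oplus\mathcal H_0$. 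The paper's approach has the advantage of producing the submersion form over $S^4$ displayed above, which is reused in the proof of Theorem~\ref{BS} and is conceptually tied to the twistor fibration; your approach is more self-contained, avoids introducing $s_4,s_7$, and makes the signature analysis completely transparent.
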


\begin{proof}
The Euclidean metric is given by
\[ e = dr^2 + r^2s_7,\]
where $R=r^2,$ 
\[\ts 2s_7 = \pi^*\!s_4 + 2\suml_{i=1}^3\hal_i^2,\]
and $s_7,s_4$ are standard metrics on $S^7,S^4$. Starting from $e,$ we
can define $s_7$ and $s_4$ by these formulae and verify that $Y_i\ip
s_4=0$ for $i=1,2,3$. We have inserted a `$2$' in the definition
of $s_7$ to match the K\"ahler metric 
\[ 2(s_7 - \hal_1^2) = \pi^*\!s_4 + 2(\hal_2^2+\hal_3^2)\] 
on $\CP^3$ corresponding to the 2-form $d\hal_1= \tu_1+2\hal_{23}$ 
used in the proof of Lemma \ref{NK}.

The bilinear from defined by the proposition can now be expressed 
as
\be{3-c} \hh_c =
dR^2 + R^2\pi^*\!s_4 + (3-c)\big(\al_2^2+\al_3^2\big).\ee
When $c=2,$ this matches the metric
$\pi^*\!s_4 + \hal_2^2+\hal_3^2$
inherent in Lemma \ref{NK}. The bilinear form $\hh_c$ contains
$Y_1$ in its kernel since $\al_1=Y_1\ip\eu$ and $Y_1$ has zero
contraction with $dR,\al_2,\al_3$. Note that
\be{hh3}\hh_3 = dR^2 + R^2\,\pi^*\!s_4,\ee
but if $c<3$ then $\hh_c$ has rank 7 and is positive definite on the
quotient $\R^8/\left<Y_1\right>$.
\end{proof}

\begin{remark}\rm
The underlying family of metrics on $\CP^3$ described in the proof is
well known: it arises from Riemannian submersions $\CP^3\to S^4$ by
varying the scaling on the fibres. The cases $c=1$ and $c=2$
correspond to the two $Sp(2)$-invariant Einstein metrics on $\CP^3$
\cite{Besse,WZ}. In the former case, the conical metric can be
simply expressed as
\[ \hh_1 = dR^2 + 2\,R^2(s_7 - \hal_1^2),\]
and has six equal eigenvalues. {\gr Our approach (emphasized in the
  Introduction) of expressing everything in Euclidean terms on $\R^8$}
will have computational advantages.
\end{remark}

The previous proof related expressions for metrics in Euclidean
coordinates on $\R^8$ to those arising from Riemannian submersions
from $\CP^3$ to $S^4$. We can apply the same technique to the
differential forms defining $\2$ structures. Consider first the 2-form
\[ \tu_0 = dR\we\al_1 - \al_2\we\al_3\] 
defined on $\R^8$. In contrast to $\tu_1,\tu_2,\tu_3,$ its restriction
to the fibres of $\H^2\smz\to\HP^1$ is non-degenerate. Since
\[\ts \frac13R^3\om + R\tu_0 
= \frac13R^3d\hal_1+R\,dR\we\al_1\y = d(\frac13R^3\hal_1)\] 
is exact, it follows from Proposition \ref{exact} that the 2-form
$-R\tu_0$ is an alternative primitive for $\varphi$. Moreover,
\be{ga2}\ba{rcl} d\tu_0
&=& d(R\,dR\we\hal_1 - R^2\,\hal_2\we\hal_3)\y
&=& -R\,dR\we d\hal_1 - 2R\,dR\we\hal_{23} -
     R^2(\tu_2\we\hal_3-\hal_2\we\tu_3)\y
&=& (-R\,dR)\we\tu_1+(-R\al_3)\we\tu_2+(R\al_2)\we\tu_3.
\ea\ee
{\gr Set $\ga_2=d\tu_0$} for consistency with \cite[page 842]{BS}, where (with
different notation) it is paired with the simple 3-form
\[ \ga_1 = (-R\,dR)\we(-R\al_3)\we(R\al_2) = -R^3dR\we\al_{23}.\]
At any given point of $\sC,$ the 3-form $\al^{}_{123}$ generates the
cotangent space to the $S^3$ fibres of $S^7\to S^4,$ whereas $\ga_1$
generates the $\R^3$ fibres.

We have \be{match} -\varphi = d(R\tu_0) = R^{-3}\ga_1 + R\ga_2,\ee which
matches \cite[Case iii, page 844]{BS} with $r=R^4,$ $\kappa=1/2,$ and an
overall change of orientation. The description \eqref{match} enables us to
modify $\varphi$ when the conical metric $\hh_2$ is deformed to the complete
`Bryant-Salamon' metric. The latter has the effect of smoothing the vertex of
the cone, which is replaced by the zero section $S^4$ in $\LaS$. {\gr The next
  statement is well known \cite{BS,GPP}, but the theorem serves to record the
  complete $\2$ structure in a novel way:}

\begin{theorem}\label{BS}
The total space $\widetilde\sC$ of $\LaS$ admits a complete metric
$\BS$ with holonomy equal to $\2$. If the scalar curvature
of $S^4$ equals $1/6,$ it satisfies
\[\gr R^2\BS=(R^4+1)^{1/2}h_2-(R^4+1)^{-1/2}(dR^2+\al_2^2+\al_3^2),\]
and is associated to the 3-form
$-\BSphi = d\left((R^4+1)^{1/4}\tu_0\right)$. 
\end{theorem}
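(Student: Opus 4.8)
The plan is to verify that the stated $\BS$ and
$\BSphi$ satisfy the two defining conditions of a $\2$ structure,
$d\BSphi = 0$ and $d\vasphi^{}_{\mathrm{BS}} = 0$, and then invoke the
Bryant-Salamon uniqueness/completeness results to conclude holonomy is
exactly $\2$. The key observation is that $\tu_0 = dR\we\al_1 -
\al_2\we\al_3$ has a simple exterior derivative, computed in
\eqref{ga2}: $d\tu_0 = \ga_2 = (-R\,dR)\we\tu_1 + (-R\al_3)\we\tu_2 +
(R\al_2)\we\tu_3$. Since $\tu_0$ is a 2-form and $(R^4+1)^{1/4}$ is a
function of $R$ alone, we get $-\BSphi = d((R^4+1)^{1/4}\tu_0) =
(R^4+1)^{1/4}\ga_2 + \frac14(R^4+1)^{-3/4}\cdot 4R^3\,dR\we\tu_0$, and
closure is automatic by $d^2 = 0$. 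The real content is therefore
\emph{not} $d\BSphi = 0$ but rather the identification of the metric
$\BS$ determined by $\BSphi$ via the formula $\hh(X,Y)\upsilon =
\frac16(X\ip\BSphi)\we(Y\ip\BSphi)\we\BSphi$, and the verification that
$d\vasphi^{}_{\mathrm{BS}} = 0$ for the resulting Hodge star.

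First I would record that $-\BSphi = (R^4+1)^{1/4}\ga_2 +
R^3(R^4+1)^{-3/4}\,dR\we\tu_0$, expand $dR\we\tu_0 = -dR\we\al_2\we\al_3$
(since $dR\we dR = 0$), and combine with the explicit form of $\ga_2$ to
write $-\BSphi$ as an explicit sum of wedge products of $dR, \al_1,
\al_2, \al_3, \tu_1, \tu_2, \tu_3$. This is the precise analogue of
\eqref{match}, now with $R^{-3}$ and $R$ replaced by
$R$-dependent coefficients; matching with \cite[Case iii, page
844]{BS} under the substitution $r = R^4$, $\kappa = 1/2$ and the stated
orientation change is the cleanest route, since that reference already
carries out the algebraic computation of the metric and the verification
that $d\vasphi = 0$ for the smoothed (complete) structure. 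I would then
compute the metric $\BS$ from $\BSphi$ directly: using that $\al_{123}$
spans the $S^3$-fibre cotangent directions while $dR, \tu_1, \tu_2,
\tu_3$ behave as in the conical case, the formula $\hh(X,Y)\upsilon =
\frac16(\cdots)$ produces, after comparison with \eqref{3-c} and
\eqref{hh3}, the combination $R^2\BS = (R^4+1)^{1/2}\hh_2 -
(R^4+1)^{-1/2}(dR^2 + \al_2^2 + \al_3^2)$; the normalization of the
$S^4$ scalar curvature to $1/6$ is exactly what pins down the constant
$1$ appearing inside $(R^4+1)$ rather than an arbitrary $R^4 + c^4$.

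Next I would check that $\BS$ is a genuine complete Riemannian metric on
$\widetilde\sC = \LaS$: away from the zero section ($R > 0$) positivity
follows from $c = 2 < 3$ in Proposition \ref{hc} together with the fact
that $(R^4+1)^{1/2} > (R^4+1)^{-1/2}$ makes the subtracted term
harmless; near the zero section one passes to coordinates adapted to
$\LaS$ in which $R^2\hh_2$ and $dR^2 + \al_2^2 + \al_3^2$ both extend
smoothly and the combination $(R^4+1)^{1/2}\hh_2 - (R^4+1)^{-1/2}(dR^2 +
\al_2^2 + \al_3^2)$ degenerates along the fibre $\R^3$ directions in
precisely the way needed for $R^2\BS$ to vanish there and $\BS$ itself to
extend — this is the standard Bryant-Salamon smoothing and I would cite
\cite{BS, GPP} for the details rather than reproduce them. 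Completeness
then follows because the $\R^+$ direction has infinite length as $R \to
\infty$ (the metric is asymptotically conical) and the zero section is
compact. Finally, holonomy \emph{equals} $\2$ (not a proper subgroup)
follows since the metric is irreducible and non-flat — e.g. it is
asymptotic to the cone $\hh_2$ whose holonomy is already known to be
exactly $\2$ by \cite{BS}, so the holonomy of $\BS$ contains that of the
asymptotic cone.

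\textbf{Main obstacle.} The genuinely delicate step is the smooth
extension across the zero section $S^4 \subset \LaS$ and the attendant
verification that $\BS$ is positive-definite and complete there; the
closure $d\BSphi = 0$ is free, and the identification $d\vasphi^{}_{\mathrm{BS}} = 0$ plus the metric
formula are essentially a bookkeeping translation of \cite[Case
iii]{BS}. I expect the cleanest writeup reduces the analytic content to
citations of \cite{BS, GPP} while the novelty — as the paper itself
signals — lies entirely in having packaged the structure through the
single primitive $(R^4+1)^{1/4}\tu_0$.
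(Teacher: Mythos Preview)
Your proposal is correct and, despite the more elaborate front-loading, lands on essentially the same argument as the paper: compute $d\big((R^4+1)^{1/4}\tu_0\big) = (R^4+1)^{-3/4}\ga_1 + (R^4+1)^{1/4}\ga_2$ (your $R^3(R^4+1)^{-3/4}\,dR\we\tu_0$ is exactly $(R^4+1)^{-3/4}\ga_1$), match this with \cite[Case iii, p.~844]{BS} under $r=R^4$, $\kappa=1/2$, and then rewrite the metric using $\pi^*\!s_4 = R^{-2}\big(\hh_2 - dR^2 - \al_2^2 - \al_3^2\big)$ from \eqref{3-c}. The paper's proof is even more compressed than your final paragraph suggests --- it treats the existence, completeness, and holonomy statements as already established by \cite{BS,GPP} and confines itself purely to the translation of the 3-form and metric into the $\tu_0$, $\hh_2$ notation, exactly as your closing remark anticipates.
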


\begin{proof}
The assumption on the scalar curvature means $\kappa=1/2,$ then \cite[page
  844]{BS} tells us that the 3-form associated to the complete $\2$ metric
equals
\[ (R^4+1)^{-3/4}\ga_1 + (R^4+1)^{1/4}\ga_2,\]
which coincides with $-\BSphi$ defined in the theorem. The metric $\BS$ can be
gleaned from the proof of Proposition \ref{hc} with $c=2$. It is represented by
\[ (R^4+1)^{-1/2}R^2(dR^2+\al_2^2+\al_3^2)+(R^4+1)^{1/2}\pi^*\!s_4,\]
relative to the fibration $\LaS\to S^4,$ and one can then express
$\pi^*\!s_4$ in terms of $\hh_2$.
\end{proof}

\setcounter{equation}0
\section{A 2-torus action on $\R^8$}\label{T2}

\emph{Left} multiplication by $\1$ on $\R^8$ gives a Killing vector field
\[ X=X_1= -\x_1\pd_0+\x_0\pd_1-\x_3\pd_2+\x_2\pd_3
          -\x_5\pd_4+\x_4\pd_5-\x_7\pd_6+\x_6\pd_7.\]
Given the sign changes in passing from $Y_1$ to $X,$ we have
\[\ba{rcl}
\frac12(X+Y_1) &=& -\x_1\pd_0+\x_0\pd_1-\x_5\pd_4+\x_4\pd_5,\y
\frac12(X-Y_1) &=& -\x_3\pd_2+\x_2\pd_3-\x_7\pd_6+\x_7\pd_6.
\ea\]
so that these combinations define standard $\1$ actions on the two
summands
\[ \R^4_{0145}=\{(\x_0,\x_1,\x_4,\x_5)\},\quad
   \R^4_{2367}=\{(\x_2,\x_3,\x_6,\x_7)\}.\]
We now consider the associated moment maps.

Although $\R^4_{0145}$ is not a quaternionic subspace of $\H^2$ as
given, we can identify it with $\H$ by setting
\[ q=\x_0\+\x_1i\+\x_4j\+\x_5k.\]
This enables us to apply the analysis from Section \ref{GH}, merely
replacing the indices $2,3$ by $4,5$.  The $\1$ action corresponding
to $\frac12(X+Y_1)$ is triholomorphic, and the moment mapping is
\[ (\x_0,\x_1,\x_4,\x_5)\lmt(\u_1,\u_2,\u_3),\]
where
\be{uuu}\left\{\ \ba{rclcl}
\u_1 &=& x_0^2 + x_1^2 - x_4^2 - x_5^2 &=& |\z_0|^2{\gr-|\z_2|^2}\y
\u_2 &=& 2(\x_0\x_4 + \x_1\x_5) &=& \ {\gr2}\,\Re(\z_0\oz_2)\y
\u_3 &=& 2(\x_0\x_5 - \x_1\x_4) &=& -{\gr2}\,\Im(\z_0\oz_2),\ea\right.\ee
We denote $(\u_1,\u_2,\u_3)$ by $\bfu$. There is a bijection
\[ \R_{0145}^4/\1\cong \La^2_-(\R_{0145}^4)\cong\R^3,\]
which is a diffeomorphism away from the respective origins. We set
\[ u=x_0^2\+x_1^2\+x_4^2\+x_5^2,\]
so that
\[\ts u^2=\suml_{i=1}^3\!u_i^2=|\bfu|^2.\] 
The function $u$ was denoted $\u_0$ in Section \ref{GH}; it will be
convenient to omit the subscript since no confusion should arise in
this printed document with vector $\bfu\in\R^3$.

Similarly, the hyperk\"ahler moment map for $\frac12(X-Y_1)$ on
$\R^4_{2367}$ can be identified with $(\v_1,\v_2,\v_3),$ where
\be{vvv}\left\{\ \ba{rclcl} 
\v_1 &=& x_2^2 + x_3^2 - x_6^2 - x_7^2 &=& |\z_1|^2-|\z_3|^2\y
\v_2 &=& 2(\x_2\x_6 + \x_3\x_7) &=& \ {\gr2}\Re(\z_1\oz_3)\y
\v_3 &=& 2(\x_2\x_7 - \x_3\x_6) &=& \ {\gr2}\Im(\z_1\oz_3).\ea\right.\ee
{\gr Note contrasting signs for the complex expressions of $\u_3$ and $\v_3$.}
We also set $\bfv=(\v_1,\v_2,\v_3)$ and $v=|\bfv|,$ so that
\[\ts u+v = \suml_{i=0}^7 x_i^2 = R.\] 
Our real 6-dimensional quotient space is 
\[ \sM\ =\ \frac{\CP^3}{SO(2)}\times\R^+\ \cong\ 
\frac{\R^8\smz}{\1\times \1}\ \cong\ \R^6\smz,\]
and we shall work with the coordinates 
\[ \buv=(\u_1,\u_2,u_3;\,\v_1,\v_2,\v_3)\] 
on $\sM$. Note that the `radii' $u$ and $v$ are not everywhere smooth in
these coordinates.

\begin{remark}\label{ambi}\rm
The appearance of complex conjugates in the moment maps is a
consequence of the choice \eqref{zzzz}. In fact, $\sM$ is
ambidextrous: as a $T^2$ quotient of $\R^8$ the two $\1$ factors have
equal status. Switching factors amounts to swapping $X$ and $Y_1,$ and
changing the sign of $\bfv$. This is achieved by replacing $\z_1$ by
$-\oz_3$ and $\z_3$ by $\oz_1$. However, it is the additional
structure that we impose that breaks the symmetry. Referring to the
definitions at the start of Section \ref{holo}, we see that a change
from \emph{right} to \emph{left} quaternionic multiplication will
replace $\z_1,\z_3$ by their conjugates and change the sign of $\v_3$
(but not $\v_1,\v_2$).
\end{remark}

The interaction of $X$ with the underlying quaternionic structure of
$\R^8$ gives rise to functions $\mu_j=X\ip\al_j,$ explicitly
\be{mus}\left\{\ba{rcl}
\mu_1 &=& x_0^2+x_1^2-x_2^2-x_3^2+x_4^2+x_5^2-x_6^2-x_7^2\y \mu_2 &=&
2(-\x_0\x_3+\x_1\x_2-\x_4\x_7+\x_5\x_6)\y \mu_3 &=&
2(\x_0\x_2+\x_3\x_1+\x_4\x_6+\x_5\x_7).\ea\right.\ee
These functions will be needed to define an appropriate $Sp(2)$
invariant connection on the $\1$ bundle $\pq$. Observe that
$\mu_1=u-v,$ whereas $\mu_2,\mu_3$ will be used (in
Corollary \ref{hor}) to characterize those points of $\sM$ over which
the circle fibres are \emph{horizontal} in $\CP^3$.

Fix $c<3$. The Riemannian metric $\hh_c$ defined on $\sC$ by
Proposition \ref{hc} can be used to define a 1-form
\be{Thc}\Th_c = \frac2{N_c}\,X\ip\hh_c,\ee
where
\be{Nc} N_c = \hh_c(X,X) =
2(R^2 -\mu_1^2) + {\gr(1-c)}\big(\mu_2^2+\mu_3^2\big).\ee
By design, $\Th_c$ annihilates the orthogonal complement of $X$ and
$X\ip\Th_c=2$. Obviously, $\sL_X\Th_c=0,$ and so we also have
$\sL_X(d\Th_c)=-d(X\ip\Th_c)=0$. This confirms that $d\Th_c$ passes to
the quotient $\sM$. We may regard $\Th_c$ as the connection defined by
the respective metric on the total space of $\pq,$ and $F=d\Th_c$ as
its curvature. Strictly speaking both should be multiplied by
$i\in\fu(1),$ but will work with the real forms. These forms do not
change when the metric is rescaled by a constant.

The factor `2' has been inserted in the definition of $\Th_c$ to
reflect the fact that $\1$ acts on $\R^8,$ while $SO(2)=\1/\Z_2$ acts
effectively on $\CP^3$ and $S^4$. (Up to now, we have blurred this
distinction.) Left multiplication by $e^{it}$ is only effective on
$\CP^3$ for $t\in[0,\pi),$ and the connection 1-form is normalized so
  that the integral of $\Th_c$ over each circle fibre of $\sC$ equals
  $2\pi$. This will be important in a subsequent discussion of Chern
  classes.

Returning to Proposition \ref{hc} and the definitions \eqref{Thc}
and \eqref{Nc}, we infer

\begin{proposition}\label{th}
Assuming $c<3,$ we have
\[\ba{rcl}
N_c &=& 2(5-c)uv +2(1-c)\cuv\y
N_c\Th_c &=& 4R\,X\ip e -4\mu_1\al_1 +2(1-c)(\mu_2\al_2+\mu_3\al_3).
\ea\]
\end{proposition}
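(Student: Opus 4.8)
The plan is to evaluate the two displayed expressions by substituting the definitions \eqref{Nc}, \eqref{Thc}, and \eqref{bilinear} and then rewriting everything in terms of the $SO(2)\!\times\!SO(2)$ invariants $u$, $v$, $\cuv$. First I would handle $N_c$. We know from \eqref{Nc} that $N_c = 2(R^2-\mu_1^2)+(1-c)(\mu_2^2+\mu_3^2)$, and from \eqref{mus} that $\mu_1=u-v$, while $R=u+v$; hence $R^2-\mu_1^2=(R-\mu_1)(R+\mu_1)=(2v)(2u)=4uv$. So the first term is $8uv$. For the second term I must show $\mu_2^2+\mu_3^2 = 2(uv+\cuv)$, or equivalently express $\mu_2^2+\mu_3^2$ via the quaternionic moment-map data. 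The cleanest route is to recognize that $(\mu_1,\mu_2,\mu_3)$ is (up to normalization) the $Sp(1)$-moment map applied to $X$, i.e. it records $X\ip\al_j$, and to relate it to $\bfu,\bfv$ by a direct bilinear identity in the $\x_i$. Combining $8uv + 2(1-c)(uv+\cuv) = 2(5-c)uv + 2(1-c)\cuv$ then gives the stated formula for $N_c$; this pins down the needed identity $\mu_2^2+\mu_3^2 = 2(uv+\cuv)$, which I would verify by expanding both sides in the $\x_i$ (a finite, mechanical check using \eqref{uuu}, \eqref{vvv}, \eqref{mus}).

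For the second identity I would start from $N_c\Th_c = 2\,X\ip\hh_c$ by \eqref{Thc}, and contract $X$ into the bilinear form \eqref{bilinear}:
\[
X\ip\hh_c = \half\,(X\ip dR^2) + 2R\,(X\ip e) - 2(X\ip\al_1^2) + (1-c)\big(X\ip\al_2^2 + X\ip\al_3^2\big).
\]
Since $X$ is the generator of a linear isometric circle action, $X\ip dR^2 = 2\,X\ip(R\,dR) = 0$ (the action preserves $R$); also $X\ip\al_j^2 = 2(X\ip\al_j)\,\al_j = 2\mu_j\,\al_j$ because $X\ip\al_j=\mu_j$. Therefore
\[
X\ip\hh_c = 2R\,(X\ip e) - 4\mu_1\al_1 + 2(1-c)(\mu_2\al_2 + \mu_3\al_3),
\]
and multiplying by $2$ yields exactly $N_c\Th_c = 4R\,X\ip e - 4\mu_1\al_1 + 2(1-c)(\mu_2\al_2 + \mu_3\al_3)$, as claimed. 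Here I am using that $X\ip e = X^\flat$ is the Euclidean dual 1-form of $X$ (analogous to $\al_i = Y_i\ip e$), and $\al_1 = Y_1\ip e$; no further simplification of $X\ip e$ is needed for this statement.

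The one genuine computation is the scalar identity $\mu_2^2+\mu_3^2 = 2(uv+\bfu\cd\bfv)$ underlying the formula for $N_c$; everything else is bookkeeping with contraction identities and the already-established relations $\mu_1=u-v$, $R=u+v$. I expect the main obstacle to be keeping the signs straight in that expansion — the definitions \eqref{uuu}, \eqref{vvv} carry the deliberate sign asymmetry flagged in Remark \ref{ambi} (contrasting signs of $\u_3$ and $\v_3$), and \eqref{mus} has its own sign pattern, so the cross terms in $\mu_2^2+\mu_3^2$ must be matched carefully against $\cuv = \u_1\v_1+\u_2\v_2+\u_3\v_3$. A convenient way to organize this and avoid sign errors is to use the complex expressions on the right of \eqref{uuu}--\eqref{vvv}: writing $\mu_2+i\mu_3$ (or its conjugate) as a Hermitian expression in $\z_0,\z_1,\z_2,\z_3$, one finds $\mu_2^2+\mu_3^2=|\mu_2+i\mu_3|^2$ factors through $|\z_0\oz_2 \pm \z_1\oz_3|$-type terms, whose modulus squared reproduces $2(uv+\cuv)$ after using $u=|\z_0|^2+|\z_2|^2$, $v=|\z_1|^2+|\z_3|^2$ and the complex forms of $\u_j,\v_j$. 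Once that identity is in hand, the Proposition follows immediately.
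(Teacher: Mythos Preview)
Your approach is correct and is exactly what the paper has in mind: the proposition is stated there as an immediate inference from Proposition~\ref{hc} together with \eqref{Thc} and \eqref{Nc}, with no further proof given. Your identification of the key scalar identity $\mu_2^2+\mu_3^2 = 2(uv+\cuv)$ is right (the paper records this as $\mu_2^2+\mu_3^2=2a_-$ later, in the proof of Lemma~\ref{gc}), and the contraction of $X$ into \eqref{bilinear} is the intended route for the second line.

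One arithmetic slip to fix: for a symmetric tensor $\al_j^2=\al_j\otimes\al_j$, the contraction $X\ip\al_j^2$ means $\al_j(X)\,\al_j=\mu_j\,\al_j$, \emph{without} the factor of~$2$. With your extra factor, your displayed $X\ip\hh_c$ already equals the claimed $N_c\Th_c$, so ``multiplying by $2$'' would overshoot. The correct chain is
\[
X\ip\hh_c = 2R\,(X\ip e) - 2\mu_1\al_1 + (1-c)(\mu_2\al_2+\mu_3\al_3),
\]
and then $N_c\Th_c = 2\,X\ip\hh_c$ gives the stated formula. Everything else in your plan is sound.
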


Recall that the Riemannian metric
\[ \hh_c = dR^2 + R^2\,\wh_c\]
on the cone $\sC=\R^+\ti\CP^3$ is defined for $c<3,$ and has holonomy
$\2$ when $c=2$.

\begin{definition}\label{ggc}
The \emph{pushdown} of $\hh_c$ is the Riemannian metric $\gg_c$ on
$\sM$ defined by setting
\[\hh_c = \pq^*\gg_c + \qart\hh_c(X,X)\,\Th_c\ot\Th_c.\]
\end{definition}

\noindent To obtain $\gg_c$ one `subtracts' the component of $\hh_c$
tangent to the circle fibres, and
\[\gr \pq\colon(\sC,\hh_c)\lra(\sM,\gg_c)\]
is a Riemannian submersion on an open subset of its domain.\smallbreak

Note that $\gg_c$ is $\1$-invariant and `horizontal' in the sense
that $X\ip(\pq^*\gg_c)=0$. It follows that $\gg_c$ can (for $c$ fixed)
be expressed as
\be{ABC}\ts \suml_{i=1}^3 A_i^{}du_i^2+\suml_{i,j=1}^3 B_{ij}^{}d\u_id\v_j+
\suml_{j=1}^3 C_j^{}dv_j^2,\ee
where the coefficients are rational functions of $\u_1,\ldots,\v_3$.
Our aim is to describe the metrics $\gg_1$ and $\gg_2$ in this way,
but it is instructive first to write down the Euclidean metric $\eu$
on $\R^8$ in terms of our $T^2$ quotient, using

\begin{notation}\label{prime}\rm
Write
\[\al_i=\al_i'-\al_i'',\qquad X\ip\eu=\al_i'+\al_i'',\]
where 
\[\al_i'\in\La^2(\R^4_{0145})^*,\quad\al_i''\in\La^2(\R^4_{2367})^*.\]
Observe that
\[ X\ip\al_1=u-v=\mu_1,\]
whereas $\eu(X,X)=u+v=R$.
\end{notation}

\begin{lemma}\label{eT2} 
The Euclidean metric on $\R^8$ can be written
\[ e =  R^{-1}\big(\al_1{}^{\!2} + uv\,\Th_1{}^{\!2}\big) +
\qart\big(u^{-1}|d\bfu|^2+v^{-1}|d\bfv|^2\big).\]
\end{lemma}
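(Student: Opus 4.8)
The plan is to verify Lemma~\ref{eT2} by decomposing the Euclidean metric $e=\sum_{i=0}^7 dx_i^2$ according to the orthogonal splitting $\R^8=\R^4_{0145}\oplus\R^4_{2367}$ and applying the four-dimensional Gibbons--Hawking formula from Section~\ref{GH} to each summand. On $\R^4_{0145}$, identified with $\H$ via $q=x_0+x_1 i+x_4 j+x_5 k$, the $\1$ action generated by $\half(X+Y_1)$ is triholomorphic with moment map $\bfu$ and radius $u$; the computation below Lemma~\ref{star} (with indices $2,3$ replaced by $4,5$) gives
\[
\suml_{i\in\{0,1,4,5\}}\!dx_i^2 \;=\; u^{-1}\,\beta^2 \;+\; \qart u^{-1}|d\bfu|^2,
\]
where $\beta = -x_1 dx_0+x_0 dx_1-x_5 dx_4+x_4 dx_5 = \half(X+Y_1)\ip e = \al_1'$ (in Notation~\ref{prime}) and $X\ip e$ restricted to this summand equals $\al_1'$ as well, since $u = x_0^2+x_1^2+x_4^2+x_5^2$ is the norm squared of $\al_1'$. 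Symmetrically, on $\R^4_{2367}$, identified with $\H$ via $q=x_2+x_3 i+x_6 j+x_7 k$, the action generated by $\half(X-Y_1)$ has moment map $\bfv$ and radius $v$, giving the analogous decomposition with $1$-form $-\al_1''$ and an extra sign that squares away:
\[
\suml_{i\in\{2,3,6,7\}}\!dx_i^2 \;=\; v^{-1}\,(\al_1'')^2 \;+\; \qart v^{-1}|d\bfv|^2.
\]
Adding these produces $e = u^{-1}(\al_1')^2 + v^{-1}(\al_1'')^2 + \qart\big(u^{-1}|d\bfu|^2+v^{-1}|d\bfv|^2\big)$, so the whole content of the lemma is the algebraic identity
\[
u^{-1}(\al_1')^2 + v^{-1}(\al_1'')^2 \;=\; R^{-1}\big(\al_1^2 + uv\,\Th_1^2\big).
\]

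To establish this last identity I would work purely at the level of $1$-forms on $\R^8$. From Notation~\ref{prime}, $\al_1 = \al_1'-\al_1''$ and $X\ip e = \al_1'+\al_1''$, so $\al_1' = \half\big((X\ip e)+\al_1\big)$ and $\al_1'' = \half\big((X\ip e)-\al_1\big)$. From Proposition~\ref{th} with $c=1$ we have $N_1 = 8uv$, hence $\Th_1 = \tfrac{2}{N_1}\,X\ip\hh_1 = \tfrac{1}{4uv}\,X\ip\hh_1$; and since $\hh_1$ on $\sC$ differs from $2Re$ only in the $dR^2$, $\al_1$, and ($c=1$, so) vanishing $\al_2,\al_3$ directions — more precisely from \eqref{bilinear}, $\hh_1 = \half dR^2 + 2Re - 2\al_1^2$ — a direct contraction gives $X\ip\hh_1 = 2R\,(X\ip e) - 2\mu_1\al_1 = 2R\,(X\ip e) - 2(u-v)\al_1$, using $X\ip dR = 0$ and $X\ip\al_1 = \mu_1 = u-v$. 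Therefore $2uv\,\Th_1 = R\,(X\ip e) - (u-v)\al_1$. Substituting these expressions for $\al_1'$, $\al_1''$ and $\Th_1$ into both sides, clearing the common denominator $4uv$ on the left and $4uvR$ on the right, and using $u+v=R$ repeatedly, the identity reduces to the polynomial relation
\[
R\big[v\,(X\ip e + \al_1)^2 + u\,(X\ip e - \al_1)^2\big] \;=\; 4uv\,(X\ip e)^2 + \big(R\,(X\ip e)-(u-v)\al_1\big)^2,
\]
which expands on both sides to $(u+v)^2(X\ip e)^2 + (u-v)^2\al_1^2 + (\text{cross terms cancelling})$ — a routine check with $u+v=R$.

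The main obstacle, such as it is, is bookkeeping rather than conceptual: one must keep straight the two different quaternionic identifications of the two $\R^4$ summands, the resulting sign conventions (in particular that the connection $1$-form on $\R^4_{2367}$ is $-\al_1''$, not $\al_1''$, owing to the $X-Y_1$ rather than $X+Y_1$ combination — a sign that is invisible after squaring but matters for orientation), and the normalization factor $2$ built into $\Th_c$ versus the factor of the four-dimensional Gibbons--Hawking model where the connection was $\xi/u_0$ with $X\ip\xi = 1$. I would double-check the crucial step $N_1 = 8uv$ against Proposition~\ref{th} (which gives $N_c = 2(5-c)uv + 2(1-c)\,\bfu\cd\bfv$, indeed $=8uv$ at $c=1$) and confirm $X\ip\hh_1 = 2R(X\ip e) - 2\mu_1\al_1$ directly from \eqref{bilinear}, after which the remaining identity is forced and the lemma follows.
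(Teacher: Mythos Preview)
Your approach is essentially the paper's: apply the four-dimensional Gibbons--Hawking formula to each summand $\R^4_{0145}$ and $\R^4_{2367}$ to obtain
\[
e = u^{-1}(\al_1')^2 + v^{-1}(\al_1'')^2 + \tfrac14\big(u^{-1}|d\bfu|^2+v^{-1}|d\bfv|^2\big),
\]
and then verify the remaining algebraic identity. The paper's route to that identity is slightly slicker: rather than computing $\Th_1$ from Proposition~\ref{th} and expanding, it observes directly that $X\ip\hh_1 = 4(u\,\al_1''+v\,\al_1')$, whence $\Th_1 = u^{-1}\al_1' + v^{-1}\al_1''$ is simply the sum of the two four-dimensional connection $1$-forms, and the identity $u^{-1}(\al_1')^2 + v^{-1}(\al_1'')^2 = R^{-1}\big((\al_1'-\al_1'')^2 + uv(u^{-1}\al_1'+v^{-1}\al_1'')^2\big)$ is then a one-line check.

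One small slip to correct in your write-up: in your displayed polynomial relation the term $4uv\,(X\ip e)^2$ on the right should be $4uv\,\al_1^2$ (it comes from $R^{-1}\al_1^2$ after clearing denominators), and both sides then expand to $(u+v)^2(X\ip e)^2 + (u+v)^2\al_1^2 + 2R(v-u)(X\ip e)\al_1$, with matching cross terms rather than cancelling ones.
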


\begin{proof} 
Using the expression for the Gibbons-Hawking metric just above Lemma
\ref{star} and Notation \ref{prime}, we have
\[ e = u^{-1}(\al_1')^2+v^{-1}(\al_1'')^2 + 
\qart\big(u^{-1}|d\bfu|^2+v^{-1}|d\bfv|^2\big).\]
We deduce that
\[ X\ip\hh_1 = 4(u\,\al_1''+v\,\al_1'),\]
which implies that $\Th_1 = u^{-1}\al_1'+v^{-1}\al_1''$ is the sum of
two connection 1-forms of the type defined in Section
\ref{GH} for the 4-dimensional case. The formula for $\eu$ now
follows.
\end{proof}

\vs

\begin{remark}\rm
The lemma provides a non-flat instance of the Gibbons-Hawking ansatz
to supplement our treatment in Section \ref{GH}. If we regard
$\R^8=\H^2$ as a hyperk\"ahler space defined by the \emph{left} action
of $Sp(1)$ then $\bfu-\bfv\colon\R^6\to\R^3$ is a moment mapping for
the triholomorphic action of $\1_1$ generated by $Y_1$. It is well
known that the resulting hyperk\"ahler quotient
\[ H(\bfm) = \frac{\{\bfx\in\R^8:\bfu-\bfv=\bfm\}}{\1_1}\]
is an Eguchi-Hanson space, provided $\bfm\ne\bf0$. Its Ricci-flat metric can be
obtained from Lemma \ref{eT2} by subtracting the vertical term
$R^{-1}\al_1{}^{\!2}$ and setting $\gr d\bfv=d\bfu$. It equals
\[
e - R^{-1}\al_1{}^{\!2}
= R^{-1}uv\,\Th_1{}^{\!2} + \qart(u^{-1}+v^{-1})|d\bfu|^2\yy
= V^{-1}\Th_1{}^{\!2} + \qart V|d\bfu|^2,\]
where
\[ V = u^{-1}+v^{-1} = \frac1{|\bfu|}+\frac1{|\bfu-\bfm|}.\]
By comparison with the formula just before Lemma \ref{star}, we see
that this is a Gibbons-Hawking potential $V$ defined by poles at
$\bf0$ and $\bfm$ in $\R^3$.

In conclusion, we can say that each affine subvariety
\be{affine}
H(\bfm)/\1=\{\buv: \bfu-\bfv=\bfm\ne\bf0\}\subset\sM
\ee
(with $\bfm$ fixed) is the base of an Eguchi-Hanson space. The
corresponding subset of the $(u,v)$-quadrant is the semi-infinite
rectangle
\[ \{(u,v):-m\le u-v\le m\le u+v\}\subset\R^2\]
where $m=|\bfm|$. The function $\bfu=(\u_1,\u_2,\u_3)$ is a moment
mapping for the action of $\1$ generated by $X$. In the description of
$H(\bfm)$ as the cotangent bundle of a 2-sphere $S^2,$ $\1$ rotates
$S^2$ and the poles of $V$ are the fixed points of $S^2$ at the zero
section of $T^*S^2$. For $m=0,$ the space $H(\bf0)$ can be identified
with a cone over $S^3$ which the Hopf map projects to $\Sf,$ the fixed
point set of $\1$ acting $S^4$. This cone is resolved to $T^*S^2$ when
$\sC$ is deformed into $\widetilde\sC$ (cf.\ Theorem \ref{BS}).
\end{remark}

Returning to Definition \ref{ggc}, the K\"ahler case $c=1$ is easy to
describe, since the splitting $\R^8=\R^4_{0145}\op\R^4_{2367}$ is
preserved:

\begin{theorem}\label{g1}
\[ \gg_1 = \half dR^2 + 
\half R\big(u^{-1}|d\bfu|^2+v^{-1}|d\bfv|^2\big),\]
where $|\bfA|^2$ denotes $\bfA\cd\bfA$.
\end{theorem}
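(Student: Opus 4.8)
The plan is to compute the pushdown metric $\gg_1$ directly from Definition \ref{ggc}, exploiting the fact that for $c=1$ the splitting $\R^8=\R^4_{0145}\op\R^4_{2367}$ is respected by all the relevant tensors. First I would recall from Lemma \ref{eT2} and its proof that, when $c=1$, the metric $\hh_1 = \half dR^2 + 2R\eu - 2\al_1^2$ (setting $c=1$ in \eqref{bilinear}, which kills the $\al_2^2+\al_3^2$ terms), and that $\Th_1 = u^{-1}\al_1' + v^{-1}\al_1''$ with $\al_1 = \al_1' - \al_1''$ and $X\ip\eu = \al_1' + \al_1''$, each summand living in one of the two $\R^4$ factors. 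The crucial point is that both $\al_1'$ and $\al_1''$ are ASD 1-forms dual to the rotation vector fields on their respective $\R^4$'s, so the four-dimensional Gibbons-Hawking analysis of Section \ref{GH} applies verbatim on each factor.

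Next I would assemble the ingredients. From the proof of Lemma \ref{eT2}, $X\ip\hh_1 = 4(u\,\al_1'' + v\,\al_1')$, so $N_1 = \hh_1(X,X) = 4(u\,v + v\,u)\cdot$(something)$ = $ one reads off $\hh_1(X,X) = 2(R^2 - \mu_1^2) = 2\big((u+v)^2 - (u-v)^2\big) = 8uv$ directly from \eqref{Nc} with $c=1$ (this is also the $c=1$ case of Proposition \ref{th}: $N_1 = 2\cdot 4\,uv = 8uv$). Hence $\qart\hh_1(X,X)\,\Th_1\ot\Th_1 = 2uv\,\Th_1^2$. Now substitute the Gibbons-Hawking form of $\eu$ on each factor, namely $\eu = u^{-1}(\al_1')^2 + v^{-1}(\al_1'')^2 + \qart(u^{-1}|d\bfu|^2 + v^{-1}|d\bfv|^2)$, into $\hh_1 = \half dR^2 + 2R\,\eu - 2\al_1^2$, and expand $2R\,\eu - 2\al_1^2 - 2uv\,\Th_1^2$ in the two-dimensional basis $\{\al_1', \al_1''\}$ of the vertical/horizontal ASD directions.

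The heart of the computation is the following linear-algebra identity in the rank-2 piece: writing $P = 2R(u^{-1}(\al_1')^2 + v^{-1}(\al_1'')^2) - 2(\al_1' - \al_1'')^2 - 2uv\,\Th_1^2$ with $R = u+v$ and $\Th_1 = u^{-1}\al_1' + v^{-1}\al_1''$, one checks that all three quadratic monomials $(\al_1')^2$, $(\al_1'')^2$, $\al_1'\al_1''$ have vanishing coefficient. Indeed the coefficient of $(\al_1')^2$ is $2(u+v)u^{-1} - 2 - 2uv\,u^{-2} = 2 + 2vu^{-1} - 2 - 2vu^{-1} = 0$, symmetrically for $(\al_1'')^2$, and the cross term is $+4 - 4uv\,(uv)^{-1} = 0$. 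Therefore the entire vertical/horizontal two-dimensional block cancels, leaving $\hh_1 - \qart\hh_1(X,X)\Th_1^2 = \half dR^2 + 2R\cdot\qart(u^{-1}|d\bfu|^2 + v^{-1}|d\bfv|^2) = \half dR^2 + \half R(u^{-1}|d\bfu|^2 + v^{-1}|d\bfv|^2)$, which is exactly $\pq^*\gg_1$; since the right-hand side contains no $\al_1', \al_1''$ it is manifestly horizontal, confirming the identification.

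I expect the main obstacle to be purely bookkeeping: one must be careful that $dR = du + dv$ interacts correctly (it does — $dR^2$ is already horizontal and involves only the $\bfu,\bfv$ coordinates through $u = |\bfu|$, $v = |\bfv|$), and that the splitting is genuinely orthogonal so that there are no cross terms between $\al_1'$ and $d\bfv$, or between the two $\R^4$'s, in $2R\,\eu$. Both follow because $\eu$ is block-diagonal in the splitting and the Gibbons-Hawking form on each $\R^4$ has no mixing between its connection square and its base metric. No genuine difficulty arises once the $c=1$ simplification (disappearance of the $\al_2, \al_3$ terms, which is what makes the splitting work) is invoked; the case $c=2$ treated later is where the real work lies.
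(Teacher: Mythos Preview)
Your proposal is correct and follows essentially the same route as the paper: both use Proposition~\ref{hc} with $c=1$, insert the Gibbons--Hawking form of $\eu$ from Lemma~\ref{eT2}, and verify that the $\al_1',\al_1''$ block of $\hh_1-\qart N_1\Th_1^2$ vanishes identically. The paper states this cancellation as the single identity $\qart\hh_1(X,X)\,\Th_1^{\,2}=2R\big(u^{-1}(\al_1')^2+v^{-1}(\al_1'')^2\big)-2\al_1^2$ and leaves the check to the reader, whereas you spell out the three monomial coefficients; the content is the same.
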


\begin{proof}
It now suffices from Proposition \ref{hc} to verify that
\[ \hh_1(X,X)\Th_1{}^{\!2} =
2R\big(u^{-1}(\al_1')^2+v^{-1}(\al_1'')^2\big) - 2\al_1^2,\] 
and this follows from equations in the proof of Lemma~\ref{eT2}.
\end{proof}

The case $c=2$ is more complicated, and some preliminary definitions
will render the result more transparent. Working on $\sM$ away from
the locus $uv=0,$ we define vectors
\be{A+-} \bfA_\pm = v\bfu\mp u\bfv\ee
and scalars
\be{a+-} \aa_\pm = uv\mp \cuv.\ee
If we denote by $2\th$ the angle between $\bfu$ and $\bfv$ (for
$0\le\th\le\pi/2$) so that $\cuv=uv\cos2\th,$ then
\[\ba{rcl}
\bfA_+\cd\bfA_+=2uv\,\aa_+ &\Rightarrow& |\bfA_+|=2uv\sin\th\y
\bfA_-\cd\bfA_-=2uv\,\aa_- &\Rightarrow& |\bfA_-|=2uv\cos\th,
\ea\]
Moreover, $\bfA_+\cd\bfA_-=0$.

The next definition will be exploited repeatedly in the sequel:

\begin{definition}\label{align}
Set
\[ \sF_\pm=\{\buv\in\sM: \bfA_\pm=\bf0\}.\]
\end{definition}

\noindent Note that $\sF_+\cap\sF_-=\{\buv\in\sM:uv=0\},$ which (modulo the
origin of $\R^6$) is $\R^3\cup\R^3$. The formulae above make it clear
that, away from their intersection, $\sF_+$ (resp.\ $\sF_-$) consists
of points $\buv$ for which $\bfu,\bfv$ are parallel and aligned
(resp.\ anti-aligned). This explains our choice of opposing signs in
the definitions \eqref{A+-} and \eqref{a+-}.\smallbreak

For the purpose of describing $\gg_2,$ we also define a vector-valued
1-form
\be{B} \bfB = u\,d\bfv -v\,d\bfu\ee
and scalar 1-forms
\be{Ga+-} \Ga_\pm = \frac1{uv}\bfA_\pm\cd\bfB,\ee
so that $\Ga_\pm$ vanishes on $\sF_\pm$.

\begin{lemma}\label{Ga}
\[\ba{rcccl}
\Ga_+ &=& \gr -u\,dv - v\,du + \bfu\cd d\bfv + \bfv\cd d\bfu
&=&\gr -d\aa_+\y
\Ga_- &=& u\,dv - v\,du + \bfu\cd d\bfv - \bfv\cd d\bfu
&=& -2(\mu_2\al_3-\mu_3\al_2).\ea\]
\end{lemma}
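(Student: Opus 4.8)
The plan is to compute $\Ga_\pm = (uv)^{-1}\bfA_\pm\cd\bfB$ directly from the definitions \eqref{A+-} and \eqref{B}, then recognize the results. Expanding,
\[
\bfA_\pm\cd\bfB = (v\bfu\mp u\bfv)\cd(u\,d\bfv - v\,d\bfu)
= uv\,\bfu\cd d\bfv - v^2\,\bfu\cd d\bfu \mp u^2\,\bfv\cd d\bfv \pm uv\,\bfv\cd d\bfu.
\]
Since $u^2 = |\bfu|^2$ gives $u\,du = \bfu\cd d\bfu$ and likewise $v\,dv = \bfv\cd d\bfv$, the middle two terms are $-v^2\,u\,du$ and $\mp u^2\,v\,dv$. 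Dividing by $uv$ yields
\[
\Ga_\pm = \bfu\cd d\bfv - v\,du \mp u\,dv \pm \bfv\cd d\bfu,
\]
which is exactly the first displayed equality in the lemma for each sign. For the $\Ga_+$ case, the terms reorganize as $\bfu\cd d\bfv + \bfv\cd d\bfu - d(uv) = d(\bfu\cd\bfv) - d(uv) = -d(uv - \bfu\cd\bfv) = -d\aa_+$ by \eqref{a+-}, so this identification is immediate.

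The substantive part is the last identification $\Ga_- = -2(\mu_2\al_3 - \mu_3\al_2)$, which requires passing from the quotient coordinates back to the Euclidean coordinates on $\R^8$. The approach is to express $u\,dv - v\,du + \bfu\cd d\bfv - \bfv\cd d\bfu$ in terms of the $\x_i$ using the explicit formulas \eqref{uuu} and \eqref{vvv} for $\u_i,\v_i$ (and $u = |\bfu|$, $v = |\bfv|$, noting $u\,du = \bfu\cd d\bfu = \sum \u_i\,d\u_i$ etc.), and separately expand $\mu_2\al_3 - \mu_3\al_2$ using \eqref{mus} and the definition of $\al_2,\al_3$ as contractions $Y_i\ip e$. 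Both sides are $\1$-invariant (indeed $T^2$-invariant) 1-forms on $\R^8$, so it suffices to match them as 1-forms in the eight variables $d\x_0,\dots,d\x_7$; one then checks that all sixteen or so coefficients agree. A cleaner route, avoiding the worst of the bookkeeping, is to recall from Notation \ref{prime} that $\al_i = \al_i' - \al_i''$ with $\al_i'$ on $\R^4_{0145}$ and $\al_i''$ on $\R^4_{2367}$; since $\u_i$ depends only on the first $\R^4$ and $\v_i$ only on the second, the quantity $u\,dv - v\,du$ splits cleanly, and one can compute $\bfu\cd d\bfv - \bfv\cd d\bfu$ block-by-block, comparing with the analogous split of $\mu_2\al_3 - \mu_3\al_2$.

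The main obstacle is simply the volume of algebra in the Euclidean expansion of $\mu_2\al_3 - \mu_3\al_2$: both $\mu_2,\mu_3$ are quadratic and $\al_2,\al_3$ are linear in the $\x_i$, so the product has many terms, and one must verify by hand that the cubic expression collapses to the differential of quadratics combination coming from the left side. I expect that grouping terms according to which of the two $\R^4$ blocks each factor lives in will make the cancellations transparent, and that the factor of $-2$ emerges from the factor of $2$ already present in the definitions of $\u_2,\u_3,\v_2,\v_3$ together with a sign from $\mu_2$. Once the Euclidean identity is checked, descent to $\sM$ is automatic by $\1$-invariance, and the fact that $\Ga_-$ vanishes on $\sF_-$ is then visible both from $\bfA_- = \bf0$ and from the formula, serving as a consistency check.
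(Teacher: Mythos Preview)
Your proposal is correct and follows essentially the same approach as the paper: the first equalities for $\Ga_\pm$ and the identification $\Ga_+=-d\aa_+$ are worked out directly from the definitions (the paper calls these ``more elementary''), while the identity $\Ga_-=-2(\mu_2\al_3-\mu_3\al_2)$ is left to a direct Euclidean computation in both your sketch and the paper's proof. Your suggestion to organize that computation via the block splitting of Notation~\ref{prime} is a sensible refinement, but the paper gives no further detail either, so there is no substantive difference in method.
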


\begin{proof}
The 1-form $\mu_2\al_3-\mu_3\al_2$ is initially defined on $\sC,$ but it is
$\1$-invariant and has zero contraction with $X,$ so passes to $\sM$. The final
equality now follows from a direct computation, whilst the others are more
elementary.
\end{proof}

These definitions enable us to state and prove

\begin{theorem}\label{g2}
\[ \gg_2 = \half dR^2 + \half\big|d\bfu+d\bfv\big|^2 + 
\frac2N|\bfB|^2 + \frac1{2N}\Ga_+^2 - \frac1{4N}\Ga_-^2,\]
where $N=N_2=6uv-2\cuv$.
\end{theorem}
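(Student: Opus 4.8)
The plan is to compute $\pq^*\gg_2$ directly from Definition \ref{ggc}, namely
\[\pq^*\gg_2 = \hh_2 - \qart\hh_2(X,X)\,\Th_2\ot\Th_2 = \hh_2 - \qart N\,\Th_2\ot\Th_2,\]
working on $\R^8$ and using the Euclidean description of $\hh_2$ from Proposition \ref{hc}, i.e. $\hh_2 = \half dR^2 + 2R\,\eu - 2\al_1^2 - (\al_2^2+\al_3^2)$, together with Proposition \ref{th} which gives $N\Th_2 = 4R\,X\ip\eu - 4\mu_1\al_1 - 2(\mu_2\al_2+\mu_3\al_3)$ and $N = 6uv - 2\cuv$. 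The first term $\half dR^2$ descends unchanged, so the real work is to show the horizontal part of $2R\,\eu - 2\al_1^2 - (\al_2^2+\al_3^2)$ equals $\half|d\bfu+d\bfv|^2 + \frac2N|\bfB|^2 + \frac1{2N}\Ga_+^2 - \frac1{4N}\Ga_-^2$.

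First I would expand $\eu$ using Lemma \ref{eT2}, which writes $\eu = R^{-1}(\al_1^2 + uv\,\Th_1^2) + \qart(u^{-1}|d\bfu|^2 + v^{-1}|d\bfv|^2)$. Substituting into $2R\,\eu$ and combining with $-2\al_1^2$, the $\al_1^2$ terms cancel, leaving $2uv\,\Th_1^2 + \half R(u^{-1}|d\bfu|^2 + v^{-1}|d\bfv|^2) - (\al_2^2+\al_3^2)$. The term $\half R(u^{-1}|d\bfu|^2+v^{-1}|d\bfv|^2)$ is the $\gg_1$ contribution from Theorem \ref{g1}; I would reorganize it using $R = u+v$ as $\half|d\bfu+d\bfv|^2$ plus a correction, since
\[\ts\frac{u+v}{u}|d\bfu|^2 + \frac{u+v}{v}|d\bfv|^2 = |d\bfu+d\bfv|^2 + \frac{v}{u}|d\bfu|^2 + \frac{u}{v}|d\bfv|^2 - 2\,d\bfu\cd d\bfv,\]
and the last three terms assemble into $\frac1{uv}|\bfB|^2$ with $\bfB = u\,d\bfv - v\,d\bfu$ as in \eqref{B}. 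So far this accounts for $\half|d\bfu+d\bfv|^2 + \frac1{2uv}|\bfB|^2$, and there remain the terms $2uv\,\Th_1^2 - (\al_2^2+\al_3^2) - \qart N\,\Th_2^2$ to identify with $\frac2N|\bfB|^2 - \frac1{2uv}|\bfB|^2 + \frac1{2N}\Ga_+^2 - \frac1{4N}\Ga_-^2$.

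The main obstacle is this last identification, which requires understanding how $\Th_2$, $\al_2$, $\al_3$, and $\Th_1$ interact as quadratic forms on the six-dimensional horizontal distribution. I would use Notation \ref{prime} to split $\al_i = \al_i' - \al_i''$ and $X\ip\eu = \al_i' + \al_i''$; the key auxiliary relations are the Gibbons-Hawking identities on each $\R^4$ factor (from Section \ref{GH}), expressing $\al_i'$, $\al_i''$ and their contractions with $\Th_1$ in terms of $d\bfu$, $d\bfv$ and $\bfu$, $\bfv$, together with Lemma \ref{Ga} which already identifies $\Ga_- = -2(\mu_2\al_3 - \mu_3\al_2)$ and $\Ga_+ = -d\aa_+$. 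The cross-terms $\mu_2\al_2 + \mu_3\al_3$ appearing in $N\Th_2$ should, modulo the vertical direction, be expressible through $\Ga_+$ and $\Ga_-$ (or through $d\aa_\pm$ and the $\al$'s), and the whole computation reduces to a finite linear-algebra check of a quadratic form on $\R^6$. I expect that after substituting $\mu_1 = u-v$ and using $R = u+v$, $N = 6uv - 2\cuv$, $\aa_\pm = uv \mp \cuv$, the coefficients fall out; the bookkeeping is heavy but mechanical, and the opposing signs $+\frac1{2N}\Ga_+^2 - \frac1{4N}\Ga_-^2$ are exactly what one expects from the fact that $\Ga_+$ comes from a genuine differential $d\aa_+$ while $\Ga_-$ carries the residual twisting recorded in Lemma \ref{FF}.
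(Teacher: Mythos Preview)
Your opening reduction is correct and matches the paper: you correctly extract $\half dR^2$, substitute Lemma~\ref{eT2}, cancel the $\al_1^2$ terms, and reorganize $\half R(u^{-1}|d\bfu|^2+v^{-1}|d\bfv|^2)$ into $\half|d\bfu+d\bfv|^2+\tfrac1{2uv}|\bfB|^2$. This is exactly the content of Theorem~\ref{g1} rewritten, and the paper's proof of Theorem~\ref{g2} likewise begins by comparing with $\gg_1$.

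Where your plan and the paper's proof diverge is in handling the residual term $2uv\,\Th_1^2-(\al_2^2+\al_3^2)-\tfrac14N\,\Th_2^2$. Two points:

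First, your suggestion to split $\al_2,\al_3$ via Notation~\ref{prime} does not work: that notation applies only to $\al_1$, since $\al_2,\al_3$ mix the two $\R^4$ factors (e.g.\ $\al_2$ contains $-x_2\,dx_0$). More importantly, your expectation that the horizontal part of $\eta=\mu_2\al_2+\mu_3\al_3$ is expressible through $\Ga_+$ and $\Ga_-$ is false. Since $X\ip\eta=2\aa_-$ and $X\ip\Th_1=2$, the basic combination is $\aa_-\Th_1-\eta$, and the paper shows (equation~\eqref{curly}) that
\[2(\aa_-\Th_1-\eta)=\tfrac1{uv}\{\bfB,\bfu,\bfv\},\]
a triple product orthogonal to both $\Ga_+$ and $\Ga_-$. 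This, together with the elementary identity $\eta^2+\tfrac14\Ga_-^2=2\aa_-(\al_2^2+\al_3^2)$, is what allows the paper to express $\gg_c-\gg_1$ cleanly (Lemma~\ref{gc}).

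Second, having obtained Lemma~\ref{gc}, the paper still needs the purely algebraic identity~\eqref{BBBB},
\[2uv\big[\aa_+\aa_-|\bfB|^2-\{\bfB,\bfu,\bfv\}^2\big]=\aa_-(\bfA_+\cd\bfB)^2+\aa_+(\bfA_-\cd\bfB)^2,\]
to convert the triple-product form into the $\Ga_\pm$ form stated in the theorem. You have no analogue of this step.

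Your fallback---``the whole computation reduces to a finite linear-algebra check of a quadratic form on $\R^6$''---is correct and is in fact how the paper says the result was first found (with Maple, solving for the coefficients in~\eqref{ABC}). But the paper then replaces that computer verification with the structured argument via Lemma~\ref{gc} and~\eqref{BBBB}, precisely because the intermediate 1-forms you would need to manipulate by hand are not the ones you name.
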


\noindent Note that $\gg_2$ is, as its definition requires, well
defined where $uv\ne0$. The latter actually implies that $N_c>0$ for
$c<3$; this follows from Proposition \ref{th} since
$5-c>|1-c|$. \smallbreak

Theorem \ref{g2} was originally derived by solving equations for
$A_i,B_{ij},C_j$ in \eqref{ABC} with the help of Maple. We shall
present a rigorous proof based on a formula for $\gg_c$ which is less
obviously non-singular:

\begin{lemma}\label{gc}
The metric $\gg_c$ induced on $\sM$ by the conical metric $\hh_c$ on
$\sC$ equals
\[ \gg_1 + (1-c)\left[\frac1{8\aa_-}\Ga_-\!^2 +
\frac1{uvN_c\aa_-}\left\{\bfB,\bfu,\bfv\right\}^2\right],\]
where curly brackets indicate triple product.
\end{lemma}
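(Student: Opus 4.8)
\textbf{Proof proposal for Lemma \ref{gc}.}

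The plan is to work directly from Definition \ref{ggc} and Proposition \ref{th}, using the decomposition in Proposition \ref{hc} that writes $\hh_c=\hh_1+(1-c)(\al_2^2+\al_3^2)$. Since $\gg_c$ is defined by $\hh_c=\pq^*\gg_c+\qart N_c\,\Th_c\ot\Th_c$, subtracting the $c=1$ instance gives the schematic identity
\[
\pq^*\gg_c-\pq^*\gg_1 \;=\; (1-c)(\al_2^2+\al_3^2) \;-\;\qart\big(N_c\,\Th_c\ot\Th_c-N_1\,\Th_1\ot\Th_1\big),
\]
so the whole computation reduces to understanding the right-hand side as a horizontal form on $\sM$. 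First I would use Proposition \ref{th} to write $N_c\Th_c=N_1\Th_1+2(1-c)(\mu_2\al_2+\mu_3\al_3)$ (the $4R\,X\ip e-4\mu_1\al_1$ piece being $c$-independent), so that $N_c\Th_c\ot\Th_c-N_1\Th_1\ot\Th_1$ expands into a cross term $2(1-c)\,\Th_1\odot(\mu_2\al_2+\mu_3\al_3)$ plus the quadratic term $4(1-c)^2 N_c^{-1}(\mu_2\al_2+\mu_3\al_3)^2$, after dividing out appropriately by $N_c$. Collecting the overall factor $(1-c)$ then leaves a bracket that is linear plus a single $(1-c)/N_c$-weighted square — exactly the shape claimed, once the $\al_i$-bilinears are rewritten in $\buv$-coordinates.

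The translation into bivector language is where the identities assembled earlier do the work. By Lemma \ref{Ga}, $\Ga_-=-2(\mu_2\al_3-\mu_3\al_2)$; the companion combination $\mu_2\al_2+\mu_3\al_3$ should likewise be identified with a horizontal 1-form in the $\u_i,\v_j$, and I expect it to match $\{\bfB,\bfu,\bfv\}/(uv)$ up to sign and a factor — this is forced because $\{\bfB,\bfu,\bfv\}$ and $\Ga_-$ span the two-dimensional space orthogonal (inside the span of $d\bfu,d\bfv$) to the obviously-vertical directions, and the triple product is the natural partner of $\Ga_\pm=\bfA_\pm\cd\bfB/(uv)$. One also needs $\mu_2^2+\mu_3^2$ in terms of $u,v,\cuv$; from \eqref{mus} and \eqref{uuu}–\eqref{vvv} one computes $\mu_2^2+\mu_3^2=2(uv+\cuv)\cdot(\text{something})$, and comparing with $\aa_+=uv-\cuv$, $\aa_-=uv+\cuv$ (note the sign convention in \eqref{a+-}) pins down the denominators $\aa_-$ and $uvN_c\aa_-$ in the statement. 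The $\al_2^2+\al_3^2$ term itself, restricted to be horizontal, should reduce to a combination of $\Ga_-^2$ and $\{\bfB,\bfu,\bfv\}^2$ with $\aa_-$ in the denominator — this is the origin of the $\tfrac1{8\aa_-}\Ga_-^2$ summand.

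The main obstacle I anticipate is bookkeeping the horizontal projection correctly: $\al_2$ and $\al_3$ are not themselves horizontal for $\hh_c$, so one cannot simply drop them into a formula for $\gg_c$, and the cross terms between $\Th_c$ and $\al_2,\al_3$ must be tracked with care to see that everything recombines into the two squares $\Ga_-^2$ and $\{\bfB,\bfu,\bfv\}^2$ with no leftover indefinite terms. The cleanest route is probably to verify the asserted formula by contracting both sides with the coordinate vector fields $\pd/\pd\u_i,\pd/\pd\v_j$ — i.e., check that the claimed expression agrees with $\hh_c$ minus its $\Th_c\ot\Th_c$ component on the horizontal distribution — rather than manipulating 1-forms symbolically; that reduces the problem to a finite list of scalar identities among $u,v,\cuv,\mu_j,\al_i(\pd_\bullet)$ that follow from \eqref{uuu}, \eqref{vvv}, \eqref{mus}, and Lemma \ref{Ga}. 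Once Lemma \ref{gc} is in hand, Theorem \ref{g2} follows by specializing $c=2$ and using $\Ga_+=-d\aa_+$ together with $|\bfB|^2$, $\Ga_+^2$, $\Ga_-^2$ and the triple-product identity $\{\bfB,\bfu,\bfv\}^2=uv\,\aa_-|\bfB|^2-\cdots$ to massage the denominators; but that is the content of the next proof, not this one.
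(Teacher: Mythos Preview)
Your overall strategy---subtract the $c=1$ instance from the $c$ instance and express the discrepancy in terms of $\eta:=\mu_2\al_2+\mu_3\al_3$---is exactly the paper's approach. But there is a concrete error in your execution that, if not caught, would stall the computation.

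You write that $\eta$ ``should likewise be identified with a horizontal 1-form'' and expect it to match $\{\bfB,\bfu,\bfv\}/(uv)$ up to constants. This is false: $X\ip\eta=\mu_2^2+\mu_3^2=2\aa_-\ne0$, so $\eta$ is \emph{not} horizontal and cannot be a pullback from $\sM$. The 1-form that actually descends is the combination $\aa_-\Th_1-\eta$ (since $X\ip\Th_1=2$), and the paper's key identity is
\[
2(\aa_-\Th_1-\eta)=\frac1{uv}\{\bfB,\bfu,\bfv\}.
\]
This error is tied to a second one: your expansion of $N_c\Th_c^2-N_1\Th_1^2$ drops the $\Th_1^2$ term. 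Writing $N_c\Th_c^2=\frac1{N_c}(N_1\Th_1+2(1-c)\eta)^2$ and using $N_1-N_c=-2(1-c)\aa_-$, one gets a contribution $-\tfrac{2(1-c)\aa_-N_1}{N_c}\Th_1^2$ in addition to the cross and quadratic terms you list. That $\Th_1^2$ piece is precisely what is needed, together with the cross term and part of $(1-c)(\al_2^2+\al_3^2)=\tfrac{1-c}{2\aa_-}(\eta^2+\tfrac14\Ga_-^2)$, to complete the square $(\aa_-\Th_1-\eta)^2$. Once you keep track of it, the algebra closes cleanly into
\[
\gg_c=\gg_1+\frac{1-c}{8\aa_-}\Ga_-^2+(1-c)\frac{4uv}{\aa_-N_c}(\aa_-\Th_1-\eta)^2,
\]
and the triple-product identification finishes the proof. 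Your fallback of checking against coordinate vector fields would work but is unnecessary; the symbolic manipulation goes through once the square is completed correctly.
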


\begin{proof}
It will be convenient to define the 1-form
$\eta=\mu_2\al_2+\mu_3\al_3$ on $\sC$ for the scope of the proof.
{\gr From Proposition \ref{th},
\[ N_c\Th_c = N_1\Th_1+2(1-c)\eta = 8uv\,\Th_1 + 2(1-c)\eta.\]}
A computation also gives $\mu_2^2+\mu_3^2 = 2\aa_-,$ so Lemma \ref{Ga}
implies
\be{eta2}\ba{rcl} \eta^2 + \qart\Ga_-\!^2
&=& \eta^2+(\mu_2\al_3-\mu_3\al_2)^2\y
&=& (\mu_2^2+\mu_3^2)(\al_2^2+\al_3^2)\y
&=& 2\aa_-(\al_2^2+\al_3^2).\ea\ee
Using Proposition \ref{hc},
\[\ba{rcl} \gg_c
&=& \hh_c - \qart h_c(X,X)\Th_c\!^2\yyy
&=& \hh_1 +(1\-c)(\al_2^2+\al_3^2) - \qart N_c\Th_c\!^2\y
&=& \ds \gg_1 + 2uv\Th_1^2 +(1\-c)(\al_2^2+\al_3^2)
    - \frac1{N_c}\big(4uv\,\Th_1 + (1\-c)\eta\big)^2.\ea\]
Using \eqref{eta2} to eliminate $\al_2^2+\al_3^2,$ and the equation
$N_c = 8uv +2(1\-c)\aa_-,$ we obtain
\[ \gg_c = \gg_1 + \frac1{8\aa_-}(1\-c)\Ga_-\!^2 
+ (1\-c)\frac{4uv}{\aa_-N_c}(\aa_-\Th_1 - \eta)^2.\]
The 1-form $\aa_-\Th_1 - \eta$ has zero contraction with $X$ and is $\1$
invariant, and must therefore be expressible in terms of $\bfu$ and
$\bfv$. A direct calculation yields
\be{curly} 2(\aa_-\Th_1 - \eta) = \frac1{uv}\T \bfB,\bfu,\bfv,,\ee
using Notation \ref{triple} from the next section.
\end{proof}

\begin{proof}[Proof of Theorem \ref{g2}]
In the light of Theorem \ref{g1}, we need to show that
\[ \gg_1-\gg_2 = \frac1{2uv}|\bfB|^2 - \frac2N|\bfB|^2 -
\frac1{2N}\Ga_+\!^2+\frac1{4N}\Ga_-\!^2,\]
where $N=N_2,$ or equivalently
\[ 4Nu^2v^2(\gg_1-\gg_2) = 4uv\,\aa_+|\bfB|^2 -
2(\bfA_+\cd\bfB)^2 + (\bfA_-\cd\bfB)^2.\]
By Lemma \ref{gc}, it suffices to show that the right-hand side of
this last equation equals
\[ \frac N{2a_-}(\bfA_-\cd\bfB)^2 +
\frac4{uva_-}\left\{\bfB,\bfu,\bfv\right\}^2.\]
{\gr Noting that $\half N-a_-=2a_+,$ this is equivalent to the assertion that}
\be{BBBB}\gr
2uv\big[\aa_+\aa_-|\bfB|^2 - \left\{\bfB,\bfu,\bfv\right\}^2\big]
= \aa_-(\bfA_+\cd\bfB)^2 + \aa_+(\bfA_-\cd\bfB)^2.\ee
This identity can be verified by inspection of its components with respect to
$B_1^2$ and $B_1B_2,$ where $\bfB=(B_1,B_2,B_3)$. (The $B_i$ are linearly
independent 1-forms provided $uv\ne0$.) Dividing by $\gr 2uv,$ this gives the
two equations
\[\ba{rcl}
|\bfu\times\bfv|^2 -(\u_2\v_3-u_3\v_2)^2 &=&
u^2v_1^2+v^2u_1^2-2\u_1\v_1\cuv,\y 
-(\u_2\v_3-u_3\v_2)(\u_3\v_1-u_1\v_3) &=&
u^2\v_1\v_2+v^2\u_1\u_2-(\u_1\v_2+\v_1\u_2)\cuv,\ea\]
whose validity is readily checked.
\end{proof}

Lemma \ref{gc} tells us that $\gg_1-\gg_2$ belongs pointwise to the
6-dimensional space spanned by the quadratic forms
\[ B_iB_j =
u^2d\v_id\v_j-uv(d\u_id\v_j+d\v_id\u_j)+v^2d\u_id\u_j.\]
We shall exploit Theorem \ref{g2} in Sections \ref{SU3} and \ref{met}.

\setcounter{equation}0
\section{SO(3) invariance}\label{curv}

The previous results have revealed the evident $SO(3)$ symmetry
inherent in the bivector formalism with $\buv\in\R^6$. In this
section, we investigate the effect of this and other symmetries, and
study the curvature of the $\1$ bundle defined by \eqref{Q} away from
the singular locus $\R^3\cup\R^3$ of $\R^6$.

The $\2$ structure on $\sC=\R^+\ti\CP^3$ is invariant by $SO(5),$
which is double covered by the action of $Sp(2)$ on $\H^2$. The
diagonal $\1$ in $Sp(2)$ commutes with $SU(2),$ which acts on $\H^2$
by
\[ \left(\!\!\ba{c} q_0\\q_1\ea\!\!\right)\lmt
A\left(\!\!\ba{c}q_0\\q_1\ea\!\!\right),\qquad
A=\left(\!\ba{cc}a&b\\-\ol b&\ol a\ea\!\right),\ |a|^2\+|b|^2=1,\]
and on $\C^4$ by
\[\left(\!\ba{cc} z_0&-z_3\\z_2&z_1\ea\!\right)\lmt
A\left(\!\ba{cc} z_0&-z_3\\z_2&z_1\ea\!\right).\] 
The determinant $\z_0\z_1+\z_2\z_3$ will play a key role in the
sequel (see Proposition \ref{pD}).  

The map \eqref{Q} is induced from mapping
$\bfq=(\q_0,\>\q_1)^{\!\top}$ to the pair of Hermitian matrices
\be{UV}
\wfu=\left(\!\ba{cc}u_1&u_2\-iu_3\y u_2\+iu_3&\-u_1\ea\!\right),
\quad
\wfv=\left(\!\ba{cc}v_1&v_2\-iv_3\y v_2\+iv_3&\-v_1\ea\!\right).
\ee
This representation helps to explain our convention in the definition
of the Gibbons-Hawking coordinates $u_i,v_j$ in Section \ref{T2}. In
any case,
\[ \wfu(A\bfq)=A\,\wfu A^{-1},\quad\wfv(A\bfq)=A\,\wfv A^{-1},\]
either of which induces the usual double covering $SU(2)\to\3$. These
facts tell us that the residual subgroup $\3=SU(2)/\Z_2$ acts
diagonally on $\sM\subset\R^3\times\R^3$. As a subgroup of $SO(5),$ it
acts trivially on a 2-dimensional subspace $\R^2$ in $\R^5$ that (we
shall see) corresponds to the subset $\{\bfu=-\bfv\}$ of $\sM$.

The right action of $j$ on $\H^2$ induces an anti-linear involution of
$(\CP^3,J_1)$ without fixed points; it acts as the antipodal map on
each $S^2$ fibre. It is the so-called \emph{real} structure, and $S^4$
can be defined as the set of \emph{real} (meaning $j$-invariant)
projective lines in $\CP^3$. Since $Y_2$ is the Killing vector field
generated by the action of $e^{jt},$ we can compute the action of $j$
at a given point by applying the associated rotation by $\pi/2$. 
If we define 
\[ j^*\bfz=\bfz\circ j^{-1}=-\bfz\circ j,\]
then \eqref{qq} tells us that
\be{j*z}\ba{c}
j^*(\z_0,\z_1,\z_2,\z_3)=(\oz_1,-\oz_0,\oz_3,-\oz_2)\yy
j^*(\x_0,\x_1,\x_2,\x_3,\x_4,\x_5,\x_6,\x_7)=
(\x_2,\x_3,-\x_0,-\x_1,\x_6,\x_7,-\x_4,-\x_5).\ea\ee 
The left $\1$ action commutes with $j,$ and we have

\begin{lemma}\label{j}
The involution $j$ passes to an isometry $\hj\colon\sM\to\sM$ that interchanges
the coordinates $\bfu\leftrightarrow\bfv$. Under this operation, the curvature
$2$-form $\gr F_c=d\Th_c$ is symmetric. As regards the tensors in Lemma
\ref{FF}, the symplectic form $\si$ is anti-symmetric, and the $(3,0)$-form
$\Psi$ maps to its complex conjugate $\ol\Psi$.
\end{lemma}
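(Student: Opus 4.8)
The plan is to verify each assertion by tracking how $j$ acts on the building blocks from which the relevant tensors are assembled. The starting point is the coordinate description \eqref{j*z}: $\hj$ sends $\bfx\in\R^8$ to the point with $(\x_0,\ldots,\x_7)$ permuted (with signs) as indicated. First I would plug this substitution into the defining formulas \eqref{uuu} and \eqref{vvv} for $\u_i$ and $\v_j$ and check directly that $\hj^*\u_i=\v_i$ and $\hj^*\v_i=\u_i$ for $i=1,2,3$; this is the $\bfu\leftrightarrow\bfv$ interchange. Since $j$ is an isometry of the round $S^7$ commuting with the left $\1$-action (stated in the text just before the lemma), and $R=u+v$ is clearly $\hj$-invariant, $\hj$ descends to an isometry of $(\sM,\gg_c)$ for every $c<3$ — one can also see this directly from Theorems \ref{g1} and \ref{g2}, whose right-hand sides are manifestly symmetric under $\bfu\leftrightarrow\bfv$ (note $\bfB=u\,d\bfv-v\,d\bfu\mapsto-\bfB$, so $|\bfB|^2$ is invariant, $\bfA_+\mapsto-\bfA_+$, $\bfA_-\mapsto\bfA_-$, hence $\Ga_\pm^2$ are invariant).

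Next I would treat the curvature form $F_c=d\Th_c$. Because $\hj$ commutes with $X$ and is an isometry of $\hh_c$, it preserves $\Th_c=\frac2{N_c}X\ip\hh_c$, hence $\hj^*F_c=F_c$; ``symmetric'' here means exactly this invariance under the interchange. For the tensors of Lemma \ref{FF} I would use that $\si=X\ip\varphi$ together with the behaviour of $\varphi$ under $j$. The cleanest route is via Proposition \ref{exact} and Lemma \ref{NK}: since $j$ is anti-holomorphic for $J_1$ and acts as the antipodal map on the twistor fibres, it reverses the sign of the fibre 1-forms while its effect on the remaining $SO(5)$-invariant data is fixed; one finds $j^*\varphi=-\varphi$ on $\sC$ (this is consistent with $j$ reversing orientation on $S^7$, as recorded after the formula $\tu_i\we\tu_i\we\al_{123}\we dR=-16\,d\x_{01234567}$, and with $j$ being an isometry). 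Contracting with $X$ and pushing down then gives $\hj^*\si=-\si$, i.e.\ $\si$ is anti-symmetric. Finally, from the splitting $\varphi=\th\we\si+\ep^3\psi^-$ and $\vasphi=\th\we\ep\psi^++\frac12(\ep^2\si)^2$, with $\th$ reversing sign under $\hj$ (it is $\ep^4X\ip\hh_2$ and $\hh_2$ is $\hj$-invariant while\ldots actually $\th$ is invariant, so let me instead argue: $\si\mapsto-\si$ forces $\psi^-\mapsto+\psi^-$ from $j^*\varphi=-\varphi$ if $\th$ is invariant, hence $J\mapsto J$? — here care is needed) one extracts the transformation of $\psi^\pm$. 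The correct bookkeeping is: $\hj^*\th=\th$, so $j^*\varphi=-\varphi$ gives $\hj^*(\ep^3\psi^-)=-\si\wedge\text{(nothing)}$\ldots

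\textbf{The main obstacle.} The genuinely delicate point is pinning down the sign on $\Psi$, equivalently on $\psi^+$ and $\psi^-$ separately, and confirming it is complex conjugation rather than, say, $\Psi\mapsto-\ol\Psi$. The safe approach is to compute $\hj$ on a model 3-form: using Proposition \ref{flat}'s explicit $\Psi$ (adapted to the present $\R^8$ conventions via \eqref{UV}) one substitutes \eqref{j*z}, which swaps the two $\R^4$ blocks $\R^4_{0145}\leftrightarrow\R^4_{2367}$ up to sign and sends $\x_4\mapsto\x_6,\x_5\mapsto\x_7$ etc.; tracking the induced action on $d\u_i$, $d\v_i$ and recombining shows the three complex factors of $\Psi$ get conjugated (one checks the overall sign via the normalization $-i\,\Psi\wedge\ol\Psi=\frac43\si^3$, which is invariant, ruling out an extra minus). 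Once the sign on $\Psi$ is fixed, everything is consistent: $\psi^+=\Re\Psi\mapsto\psi^+$ and $\psi^-=\Im\Psi\mapsto-\psi^-$, which together with $\hj^*\th=\th$ reproduces $j^*\varphi=-\varphi$ and $j^*\vasphi=?$\,; I would use this last consistency check on $\vasphi=\th\wedge\ep\psi^++\frac12(\ep^2\si)^2$ — both terms land on $\th\wedge\ep\psi^+$ and $\frac12(\ep^2\si)^2$ with a net sign I would record — as the final verification that the signs are internally coherent. The whole argument is a sequence of routine substitutions; the only place demanding genuine attention is this $\pm$ bookkeeping for the $(3,0)$-form.
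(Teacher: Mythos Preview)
Your overall strategy matches the paper's: compute the effect of $j$ on the building blocks, deduce $j^*\varphi=-\varphi$ and $j^*(\vasphi)=\vasphi$, and read off everything from that. The parts on $\bfu\leftrightarrow\bfv$ and on $F_c$ are fine and essentially what the paper does.

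Where you go astray is the $\Psi$ bookkeeping. Two concrete problems. First, your claim that $j$ ``reverses the sign of the fibre 1-forms'' is wrong: the fibre $1$-forms over $S^4$ are $\hal_2,\hal_3$, and the pattern is $j^*(\al_1,\al_2,\al_3)=(-\al_1,+\al_2,-\al_3)$, not a uniform sign flip. This $(-,+,-)$ pattern is the paper's key computation, obtained by direct substitution of \eqref{j*z} into $\al_i=Y_i\ip e$. Second, your attempt to extract $\psi^\pm$ from the splitting $\varphi=\th\we\si+\ep^3\psi^-$ contains a sign slip (with $\th$ invariant and $\si\mapsto-\si$, the equation $j^*\varphi=-\varphi$ forces $\psi^-\mapsto-\psi^-$, not $+\psi^-$), and your fallback to the flat $\R^7$ model of Proposition~\ref{flat} is not applicable to the cone $\sC$.

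The paper bypasses all of this by working one level up, on $\CP^3$. From the $(-,+,-)$ pattern one reads off directly from Lemma~\ref{NK} that
\[
j^*\om=-\om,\qquad j^*\Up=\ol\Up
\]
(since $\hal_2+i\hal_3\mapsto\hal_2-i\hal_3$ and $\tu_2-i\tu_3\mapsto\tu_2+i\tu_3$). Then Proposition~\ref{exact} gives $j^*\varphi=-\varphi$ and $j^*(\vasphi)=\vasphi$ immediately. Contracting with the $j$-invariant $X$ yields $\si\mapsto-\si$ and $\ep\psi^+\mapsto\ep\psi^+$; the remaining term in $\varphi=\th\we\si+\ep^3\psi^-$ then forces $\psi^-\mapsto-\psi^-$, so $\Psi\mapsto\ol\Psi$. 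Doing the sign analysis on $(\om,\Up)$ rather than on $(\si,\psi^\pm)$ is what makes the argument clean.
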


\begin{proof} 
The fact that $\hj^*\bfu=\bfv$ follows from \eqref{j*z}. Since $j$
leaves invariant $X$ and
\[ j^*\al_1=-\al_1,\quad j^*\al_2=\al_2,\quad j^*\al_3=-\al_3,\]
it has the same $(-_,+,-)$ effect on the functions $\mu_i$. Hence
$j^*\Th_c=\Th_c,$ $j^*\om=-\om,$ and $j^*\Upsilon=\ol\Upsilon$.
(Note that $j$ must change the sign of $\om$ on $\CP^3$ because it
is an isometry for both the K\"ahler and nearly-K\"ahler metric on
$\CP^3,$ and yet changes the sign of both $J_1,J_2$.) We conclude that
\[ j^*\varphi=-\varphi,\qquad j^*(\vasphi)=\vasphi,\]
and the rest follows.
\end{proof}

Consider the connections described by Proposition \ref{th}. We shall
first show the curvature 2-form
\be{Fc} F_c = d\Th_c = d\Big(\frac2{N_c}X\ip\hh_c\Big)\ee
is completely determined (for any $c$) by its restriction to
$S^2\times S^2$ in $\R^6$. Consider the action of $\R^+\times\R^+$ on
$\R^4_{0145}\times\R^4_{2367}$ given by
\[ (\x_0,\x_1,\x_2,\x_3,\x_4,\x_5,\x_6,\x_7)\lmt
(\la\x_0,\la\x_1,\mu\x_2,\mu\x_3,\la\x_4,\la\x_5,\mu\x_6,\mu\x_7),\]
with $\la,\mu>0$. We denote this representation by $\R^{++}$. The
infinitesimal action is generated by the vector fields
\[\ba{rll}
\pd_u &=\ u\frac\pd{\pd u}
&=\ \x_0\pd_0+\x_1\pd_1+\x_4\pd_4+\x_5\pd_5,\yy \pd_v
&=\ v\frac\pd{\pd v} &=\ \x_2\pd_2+\x_3\pd_3+\x_6\pd_6+\x_7\pd_7,\ea\]
Note that $\pd_u+\pd_v$ is dual to $dR$ relative to the Euclidean
metric on $\R^8$. 

The invariance of the curvature form by $\R^{++}$ derives from that
of its primitive:

\begin{lemma}\label{++}
For any $c,$ the 1-form $\Th_c$ is invariant by $\R^{++}$. 
\end{lemma}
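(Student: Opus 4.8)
The plan is to show that $\pd_u\ip d\Th_c=0$ and $\pd_v\ip d\Th_c=0$, which together with $\sL_X\Th_c=0$ (already noted) suffices, since $\R^{++}$ is generated by $\pd_u$ and $\pd_v$. By Cartan's formula, $\sL_{\pd_u}\Th_c = \pd_u\ip d\Th_c + d(\pd_u\ip\Th_c)$, and similarly for $\pd_v$; so it is enough to prove that $\sL_{\pd_u}\Th_c=0$ and $\sL_{\pd_v}\Th_c=0$ directly.

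First I would record how the relevant tensors scale under $\R^{++}$. The vector fields $\pd_u,\pd_v$ are the Euler vector fields of the two $\R^4$ factors, so for a function homogeneous of bidegree $(p,q)$ in $(\R^4_{0145},\R^4_{2367})$ one has $\sL_{\pd_u}=p$, $\sL_{\pd_v}=q$ times the function, and a corresponding statement for forms built from $dx_i$. From the explicit formulas in Section \ref{T2}: $\bfu$ has bidegree $(2,0)$ and $\bfv$ has bidegree $(0,2)$, hence $u=|\bfu|$ is $(2,0)$, $v=|\bfv|$ is $(0,2)$, $R=u+v$ is not homogeneous but $\cuv$ is $(2,2)$; also $\mu_1=u-v$, while $\mu_2,\mu_3$ are each $(1,1)$ (inspect \eqref{mus}). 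For the 1-forms, $\al_1=\al_1'-\al_1''$ with $\al_1'$ of bidegree $(2,0)$ and $\al_1''$ of bidegree $(0,2)$; likewise $X\ip e=\al_1'+\al_1''$ has no pure bidegree, but $X\ip\hh_c$ can be handled termwise.

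The cleanest route is to use Proposition \ref{th}, which gives
\[ N_c\,\Th_c = 4R\,X\ip e - 4\mu_1\al_1 + 2(1-c)(\mu_2\al_2+\mu_3\al_3). \]
I would instead split this according to the two $\R^4$ factors using Notation \ref{prime}. Since $X\ip e = (X\ip e)' + (X\ip e)''$ with the two pieces of bidegree $(2,0)$ and $(0,2)$ respectively, and since $R=u+v$, the term $4R\,X\ip e$ decomposes into a sum of pieces whose bidegrees are $(4,0),(2,2),(2,2),(0,4)$; similarly $-4\mu_1\al_1 = -4(u-v)(\al_1'-\al_1'')$ has pieces of bidegrees $(4,0),(2,2),(2,2),(0,4)$; and $2(1-c)(\mu_2\al_2+\mu_3\al_3)$ has pieces of bidegree $(2,2)$ (since each $\mu_i$ is $(1,1)$ and each $\al_i$, for $i=2,3$, is also $(1,1)$ — this last fact I would verify from the defining formula for $\al_2$ by noting it pairs an index from $\{0,1,4,5\}$ with one from $\{2,3,6,7\}$). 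Thus $N_c\Th_c$ is a sum of 1-forms each of which is homogeneous, lying in bidegrees $(4,0),(2,2),(0,4)$. Meanwhile, from Proposition \ref{th}, $N_c = 2(5-c)uv + 2(1-c)\cuv$ is homogeneous of bidegree $(2,2)$. Hence $\Th_c = (N_c\Th_c)/N_c$ is a sum of 1-forms of bidegrees $(2,-2),(0,0),(-2,2)$.

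The remaining point is that a 1-form that is homogeneous of bidegree $(p,q)$ — meaning each of its coefficient functions times the corresponding monomial in the $dx_i$ transforms by $\la^p\mu^q$ — satisfies $\sL_{\pd_u}=p$ and $\sL_{\pd_v}=q$ times itself, so a bidegree-$(p,q)$ 1-form is $\R^{++}$-invariant iff $p=q=0$. But we must still rule out the $(2,-2)$ and $(-2,2)$ parts, so in fact I would argue differently: it suffices to show each of $\sL_{\pd_u}\Th_c$ and $\sL_{\pd_v}\Th_c$ vanishes, and since $\sL_{\pd_u}+\sL_{\pd_v}$ acts on any of the bidegree-$(p,-p)$ pieces as $(p)+(-p)=0$, we automatically get invariance under the \emph{diagonal} scaling $\la=\mu$ (which is just $\R^+$ acting on $R$, already implicit); the genuine content is invariance under $\pd_u-\pd_v$. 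So the key identity to establish is $\sL_{\pd_u-\pd_v}\Th_c=0$, i.e.\ that the bidegree-$(2,-2)$ and $(-2,2)$ parts of $\Th_c$ actually vanish. Equivalently, writing $N_c\Th_c = P_{(4,0)} + P_{(2,2)} + P_{(0,4)}$ with $P$ the homogeneous pieces identified above, I must show $P_{(4,0)}=0$ and $P_{(0,4)}=0$. The $(4,0)$ piece collects the bidegree-$(4,0)$ contributions of $4R\,X\ip e$ and of $-4\mu_1\al_1$: namely $4u\,(X\ip e)' - 4u\,\al_1' = 4u\big((X\ip e)' - \al_1'\big)$. From the 4-dimensional Gibbons-Hawking discussion of Section \ref{GH} (transplanted to $\R^4_{0145}$), $(X\ip e)' = \al_1' + \al_1'$? — no; rather $X\ip e$ restricted to $\R^4_{0145}$ equals $\al_1'$ exactly, because on that factor the Killing field for $\half(X+Y_1)$ is the standard one whose $e$-dual is $\al_1'$, while the contribution from $Y_1$ is $-\al_1'$ so that $X$ contributes $2\cdot(\text{standard})$... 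I would untangle this using the explicit components: on $\R^4_{0145}$, $X = -x_1\pd_0+x_0\pd_1-x_5\pd_4+x_4\pd_5$, so $X\ip e = -x_1dx_0+x_0dx_1-x_5dx_4+x_4dx_5$, and by Notation \ref{prime} this \emph{is} $(X\ip e)'$; on the other hand $\al_1' = -x_1dx_0+x_0dx_1-x_5dx_4+x_4dx_5$ as well. Hence $(X\ip e)' = \al_1'$, so $P_{(4,0)} = 4u(\al_1' - \al_1') = 0$; symmetrically $P_{(0,4)}=0$. Therefore $N_c\Th_c = P_{(2,2)}$ is purely of bidegree $(2,2)$, $N_c$ is of bidegree $(2,2)$, and so $\Th_c$ is of bidegree $(0,0)$, i.e.\ $\R^{++}$-invariant, for every $c$.

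\textbf{Main obstacle.} The only delicate step is the bookkeeping that separates $X\ip\hh_c$ into pure-bidegree pieces and the verification that the pure $(4,0)$ and $(0,4)$ parts cancel; this hinges on the elementary but easy-to-botch identity $(X\ip e)'=\al_1'$ and $(X\ip e)''=\al_1''$ on the two $\R^4$ factors, which follows by inspecting components. Everything else is formal: Cartan's formula plus the observation that $N_c$ is homogeneous of bidegree $(2,2)$ (immediate from Proposition \ref{th}, since $uv$ and $\cuv$ are both $(2,2)$). An alternative, slicker proof avoids bidegrees entirely: since $\pq$ is built from the $\R^{++}$-invariant functions $\bfu,\bfv$ and $\Th_c$ is, by construction, the unique $\hh_c$-metric connection on the $\1$-bundle $\pq$, and since $\R^{++}$ acts on $\sC$ by bundle automorphisms covering its action on $\sM$ and preserving $\hh_c$ up to the scalings that fix $\Th_c$ (the connection form is scale-invariant, as remarked after \eqref{Nc}), $\Th_c$ must be $\R^{++}$-invariant; but I would prefer the explicit computation above as it is self-contained and records the bidegrees that are used again later.
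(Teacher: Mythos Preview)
Your proof is correct and is essentially the same as the paper's: both arguments show that the numerator $N_c\Th_c$ from Proposition~\ref{th} and the denominator $N_c$ transform with the same weight under each factor of $\R^{++}$, so the quotient $\Th_c$ is invariant. The paper tracks the $\la$-scaling explicitly term by term (noting that $R(X\ip e)-\mu_1\al_1$ and $\mu_2\al_2+\mu_3\al_3$ each scale by $\la^2$, as does $N_c$), while you package the same computation in bidegree language and observe that the pure $(4,0)$ and $(0,4)$ pieces of $N_c\Th_c$ cancel via the identity $(X\ip e)'=\al_1'$ already built into Notation~\ref{prime}; these are the same cancellation, organised differently.
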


\begin{proof}
It suffices to consider the action of $\la$ above, and to do this we
use Notation \ref{prime}. The action on $\R^4_{0145}$ has the
following effect on tensors:
\[\ba{c}
\ba{rcl}
 R &\mapsto& \la^2u+v\\
 X\ip e &\mapsto& \gr \la^2\al_1'+\al_1''\\
 N_c &\mapsto& \la^2N_c,\quad c=1,2,\ea\\
\ba{ccc}
\mu_1 \mapsto \la^2u-v, &\qquad& \al_1 \ \mapsto\  \la^2\al_1'-\al_1''\\
\mu_2,\mu_3\mapsto\la\mu_2,\la\mu_3, &&
\al_2,\al_3 \mapsto\la\al_2,\la\al_3.\ea
\ea\]
It now follows that both $R(X\ip e)-\mu_1\al_1$ and
$\mu_2\al_2+\mu_3\al_3$ scale homogeneously by $\la^2,$ which is
cancelled out by dividing by $N_c$.
\end{proof}

The \emph{K\"ahler quotient} of $\CP^3$ by $\1$ is constructed by
first identifying the moment mapping $f$ defined by
\[ df = X\ip d\hal_1 = -d(X\ip\hal_1),\]
$d\hal_1$ being the K\"ahler form. Hence,
\[ -f=\frac{\mu_1}R = \frac{u-v}{u+v} =
\frac{|z_0|^2-|z_1|^2+|z_2|^2-|z_3|^2}{|z_0|^2+|z_1|^2+|z_2|^2+|z_3|^2}\]
(see \eqref{mus}). If we regard $\CP^3$ as the hypersurface of $\sC$
defined by $R=1$ then we can identify the quotient at level
$\ell\in[-1,1]$ with
\[ \frac{f^{-1}(\ell)}\1 =\{\buv\in\sM: -u\+v=\ell,\ u\+v=1\},\] 
On the open interval this is $S^2\times S^2,$ with the respective
sphere collapsing as $c\to\pm1$. A product K\"ahler 2-form on
$S^2\times S^2$ then pulls back to the restriction of $d\hal_1$ to
$f^{-1}(\ell),$ whereas
\[ J_1d\mu_1 = \half X\ip\hh_1 = 2(v\al_1'+u\al_1''),\]
in the notation of Proposition \ref{++}.

By general principles, we can identify a generic K\"ahler quotient by
a compact Lie group $G$ with a stable holomorphic quotient by $G^c$
\cite{Kir}. In our case, the $\1$ action on $\CP^3$ obviously extends
to
\[ \{[\z_0,\z_1,\z_2,\z_3]\mapsto
[\zeta\z_0,\zeta^{-1}\z_1,\zeta\z_2,\zeta^{-1}\z_3],
\quad\zeta\in\C^*.\] 
Mapping $[\z_0,\z_1,\z_2,\z_3]$ to
$[\z_0\z_1,\z_2\z_3,\z_1\z_2,\z_0\z_3]$ then realizes the $\C^*$
quotient as a quadric biholomorphic to $\CP^1\times\CP^1$.

Next we shall express $F_c$ in terms of $\3$ invariant quantities
manufactured from the coordinates $\buv$ using scalar and triple
products. We had originally carried this out only for $c=2,$ but the
general case enables us to express the relationship with $F_1,$ by
analogy to Lemma \ref{gc}. The formulae will also include the radii
$u,v,$ which (as we have remarked) are not smooth over $\R^3\cup\R^3$.

\begin{notation}\label{triple}\rm
In order to state results in this section (and adjacent ones), we
exploit various triple products combining functions and 1-forms. Our
convention is that each triple product has $3!=6$ terms in which a
wedge product counts as one. This is exemplified by the following
dictionary:
\[\ba{rcl}
\T\bfu,\bfv,d\bfu,    &=& (\u_2v_3-u_3v_2)d\u_1+\cdots\\
\T\bfu,d\bfu,d\bfu,   &=& 2\,\u_1d\u_2\we d\u_3+\cdots\\
\T d\bfu,d\bfu,d\bfu, &=& 6\,d\u_1\we d\u_2\we d\u_3\\
\T\bfu,d\bfu,d\bfv,   &=& \u_1(d\u_2\we d\v_3-\gr d\u_3\we d\v_2)+\cdots\\
\T\bfu,d\bfv,d\bfv,   &=& 2\,\u_1d\v_2\we d\v_3+\cdots\\
\T d\bfu,d\bfv,d\bfv, &=& 2\,d\u_1\we d\v_2\we d\v_3+\cdots
\ea\]
This is a consistent scheme, in the sense that (for example) the
substitution $\bfv=\bfu$ in $\T\bfu,d\bfu,d\bfv,$ yields
$\T\bfu,d\bfu,d\bfu,$.  Moreover,
\[\ba{rcl}
\T d\bfu,d\bfu,d\bfu, &=& d\T\bfu,d\bfu,d\bfu,\\
\T d\bfu,d\bfv,d\bfv, &=& d\T\bfu,d\bfv,d\bfv,\\
\T \bfu,d\bfu,d\bfv,  &=& d \T\bfu,\bfv,d\bfu,+2\T\bfv,d\bfu,d\bfu,.
\ea\]
\end{notation}

On a separate matter,

\begin{notation}\rm
In the light of Lemma \ref{j}, and to reduce the length of our
displays, we shall use
\[\ba{ll} 
X \ceq Y &\qbox{as shorthand for}X=Y+\widetilde Y,\y 
X\ceqq Y &\qbox{as shorthand for}X=Y-\widetilde Y,\ea\] 
where $\widetilde Y$ denotes $Y$ with $\bfu$ and $\bfv$ (and $u$ and
$v$) interchanged. The second symbol will only be used in
Theorem~\ref{Psi}.
\end{notation}

The conventions above enable us to state

\begin{theorem}\label{F} 
  Assuming $c<3,$ the curvature 2-form $F_c$ defined by \eqref{Fc} is
  given by
\[ N_c\!^2\,F_c\ \ceq\
8(3\-c)\frac{v^2}u \T\bfu,d\bfu,d\bfu, + 4(1\-c)\big[
du\we\T\bfu,\bfv,d\bfv, + u\T\bfv,d\bfu,d\bfv,\big].\]
\end{theorem}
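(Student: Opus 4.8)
The plan is to start from the explicit formula $N_c\Th_c = 4R\,X\ip e - 4\mu_1\al_1 + 2(1-c)(\mu_2\al_2+\mu_3\al_3)$ of Proposition \ref{th} and differentiate, organizing the computation so that the answer appears in $SO(3)$-invariant (bivector) form. Since $F_c = d\Th_c = d\big(N_c^{-1}(N_c\Th_c)\big) = -N_c^{-2}\,dN_c\we(N_c\Th_c) + N_c^{-1}\,d(N_c\Th_c)$, multiplying through by $N_c^2$ gives
\[ N_c^2\,F_c = -dN_c\we(N_c\Th_c) + N_c\,d(N_c\Th_c), \]
so everything reduces to computing the exterior derivative of the 1-form $N_c\Th_c$ and the differential $dN_c$, then assembling. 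From Proposition \ref{th} we also have $N_c = 2(5-c)uv + 2(1-c)\,\cuv$, which is already in bivector form, so $dN_c$ is immediate. The real work is $d(N_c\Th_c)$.

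For that I would exploit the split $\R^8 = \R^4_{0145}\op\R^4_{2367}$ and Notation \ref{prime}, writing $X\ip e = \al_1' + \al_1''$ and $\al_1 = \al_1' - \al_1''$, so that $4R\,X\ip e - 4\mu_1\al_1 = 8(v\al_1' + u\al_1'')$ after using $R = u+v$, $\mu_1 = u-v$. On each $\R^4$ factor the relevant structure equations are exactly those of Section \ref{GH}: $d\al_1' = 2(d x_{01} + d x_{45})$ is the self-dual 2-form, and the pairing of $\al_1', \al_2, \al_3$ with the ASD forms is controlled by the Gibbons-Hawking identities there, in particular $d(u^{-1}\al_1') = \qart u^{-3}\,\T\bfu,d\bfu,d\bfu,$ (Lemma \ref{star}). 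For the cross-term $\mu_2\al_2 + \mu_3\al_3$ I would use the identities already assembled in Lemma \ref{Ga} and equation \eqref{eta2}, together with $\eta = \mu_2\al_2 + \mu_3\al_3$ and the known $\mu_2^2 + \mu_3^2 = 2a_-$; differentiating $\eta$ and re-expressing the result via $\bfB = u\,d\bfv - v\,d\bfu$ and triple products $\T\bfu,\bfv,d\bfv,$, $\T\bfv,d\bfu,d\bfv,$ is the crux. The $\ceq$ notation of Lemma \ref{j} should let me halve the bookkeeping: the $\frac{v^2}{u}\T\bfu,d\bfu,d\bfu,$ term and its $\bfu\leftrightarrow\bfv$ swap come from the two $\R^4$ factors symmetrically, while the $(1-c)$ terms must assemble into a $j$-symmetric combination (consistent with $j^*\Th_c = \Th_c$, hence $j^*F_c = F_c$ from Lemma \ref{j}), which is a useful check.

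The main obstacle will be the algebraic reorganization of $d(N_c\Th_c)$ and the wedge product $-dN_c\we(N_c\Th_c)$ into precisely the stated two-term bivector expression: there will be many intermediate terms (self-dual pieces $d\al_1'$, ASD pieces, cross terms $d\mu_i\we\al_j$, $\mu_i\,d\al_j$), and showing that all the non-$SO(3)$-covariant debris cancels is where the care is needed. I would manage this by (i) first verifying the $c=3$ "trivial" case as a sanity check since $\hh_3$ degenerates, (ii) checking the contraction $X\ip F_c = 0$ and $Y_i$-invariance to confirm semi-basicness and $SO(3)$-equivariance of the candidate answer, and (iii) comparing against the already-established $c=2$ formula (which the authors say they derived first) as an independent check. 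An alternative, possibly cleaner route — which I would try in parallel — is to use Lemma \ref{FF}, namely $F\we\si = -d(\ep^3\psi^-)$ and $F\we\psi^+ = -2\ep^2\,d\ep\we\si^2$, to pin down $F_c$ from the $SU(3)$ data and the known $\si$; but since the excerpt computes $\si$ and $\Psi$ only later, the direct differentiation of $\Th_c$ via Proposition \ref{th} is the self-contained path, and that is the one I would write up.
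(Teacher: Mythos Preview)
Your plan---differentiate $N_c\Th_c$ directly via $N_c^2F_c=-dN_c\we(N_c\Th_c)+N_c\,d(N_c\Th_c)$ and reorganize into bivector triple products---is sound in principle and would eventually succeed, but it is not the route the paper takes, and you are missing the key simplifying observation.

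The paper first handles $c=1$ essentially as you suggest: from $\Th_1=u^{-1}\al_1'+v^{-1}\al_1''$ and Lemma~\ref{star} one reads off $F_1$ immediately, and checks this matches the claimed formula since $N_1=8uv$. For general $c$, however, the paper does \emph{not} attempt the full differentiation on $\R^6$. Instead it invokes Lemma~\ref{++}: the 1-form $\Th_c$ is invariant under the biscaling action $\buv\mapsto(\la^2\bfu,\mu^2\bfv)$, hence so is $F_c$. This means $F_c$ is completely determined by its restriction $\wF_c$ to $S^2\times S^2$, where $u=v=1$. On that locus the computation collapses dramatically---all radial factors vanish and one obtains $N_c^2\wF_c\ceq 8(3-c)\T\bfu,d\bfu,d\bfu,+4(1-c)\T\bfu,d\bfu,d\bfv,$. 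The full formula on $\R^6$ is then recovered by substituting $\buv\mapsto(u^{-1}\bfu,v^{-1}\bfv)$ into $\wF_c$ and tracking the resulting $u,v$ factors.

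What your approach buys is self-containment: you never need Lemma~\ref{++}. What the paper's approach buys is an order-of-magnitude reduction in algebra---the ``many intermediate terms'' and ``non-$SO(3)$-covariant debris'' you anticipate simply never arise, because on $S^2\times S^2$ the 1-forms $du$, $dv$ vanish and $N_c$ becomes a function of $\cuv$ alone. If you were to write this up, I would strongly recommend absorbing the biscaling-invariance idea; your direct route is correct but needlessly painful.
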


\begin{proof}
From the proof of Theorem \ref{g1},
\[\Th_1 = u^{-1}\al_1'+v^{-1}\al_1''.\]
We conclude that
\[ F_1 = \qart u^{-3}\T\bfu,d\bfu,d\bfu,+\qart v^{-3}\T\bfv,d\bfv,d\bfv,
\ \ceq\ \qart u^{-3}\T\bfu,d\bfu,d\bfu,.\]
This is the sum of the two 4-dimensional curvatures encountered in
Lemma \ref{star}, and (since $N_1=8uv$) agrees with the theorem's
statement.

Lemma \ref{++} tells us that $F_c$ is invariant by the action
$\buv\mapsto(\la^2\bfu,\mu^2\bfv)$ on $\R^6$. A computation shows that
the restriction $\wF_c$ of $F_c$ to $S^2\times S^2$ (so $u=1=v$)
satisfies
\[ N_c\!^2\wF_2\ \ceq\ 
8(3\-c)\T\bfu,d\bfu,d\bfu, + 4(1\-c)\T\bfu,d\bfu,d\bfv,.\] 
To find the expression for $F_c$ on (an open set of) $\R^6$ stated in
the theorem, we replace $\buv$ by $(u^{-1}\bfu,v^{-1}\bfv)$ in $\wF_c$.
\end{proof}

\begin{definition}\label{sMn}
Given a unit vector $\bfn$ in $\R^3,$ set
\[ \sM(\bfn) =
\{\buv\in\sM: \bfu\cd\bfn=0=\bfv\cd\bfn,\ uv\ne0\}.\]
\end{definition}

\noindent Thus, $\sM(\bfn)$ is an open subset of the 4-dimensional
subspace of $\R^6$ that satisfies the vector equation
$(\bfu\times\bfv)\times\bfn=\bf0$. Note that $\sM(\bfn_1)=\sM(\bfn_2)$
if and only if $\bfn_1=\pm\bfn_2$ and that $\sM(\bfn_1)\cap\sM(\bfn_2)$
lies in a plane parametrized by $(\pm u,\pm v)$ otherwise. The
curvature form $F_c$ and the induced $SU(3)$ structure are well
defined at all points of $\sM(\bfn),$ and this subset will play an
important role in both Sections \ref{SU3} and \ref{met}.\smallbreak

\begin{corollary}\label{van}
$F_c$ vanishes on each linear subvariety $\sM(\bfn)$ for any $c$. It
also vanishes on the subset $\{\bfu=-\bfv\}$ of $\sM$ (which by
Definition \ref{align} lies in $\sF_-$).
\end{corollary}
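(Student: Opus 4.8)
The plan is to verify the two vanishing assertions directly from the explicit formula for $F_c$ given in Theorem \ref{F}, using the defining equations of the subvarieties in question.

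First I would treat $\sM(\bfn)$. The key observation is that on $\sM(\bfn)$ one has $\bfu\cd\bfn=0=\bfv\cd\bfn$ identically, hence also $d\bfu\cd\bfn=0=d\bfv\cd\bfn$ along this subvariety. Thus at each point of $\sM(\bfn)$ the vectors $\bfu,\bfv$ and the vector-valued $1$-forms $d\bfu,d\bfv$ all lie in the $2$-plane $\bfn^\perp$. Every term in the formula of Theorem \ref{F} is built from triple products $\T\cdot,\cdot,\cdot,$ of such objects: $\T\bfu,d\bfu,d\bfu,$, $\T\bfu,\bfv,d\bfv,$ (appearing inside $du\we(\,\cdot\,)$), and $\T\bfv,d\bfu,d\bfv,$. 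A triple product vanishes when all three of its slots take values in a common $2$-dimensional subspace of $\R^3$ — this is just the statement that three coplanar vectors have zero scalar triple product, extended slot-by-slot to the differential-form setting of Notation \ref{triple}. Since $N_c\ne0$ where $uv\ne0$ and $5-c>|1-c|$ for $c<3$ (as noted after Theorem \ref{g2}), dividing by $N_c{}^{\!2}$ is legitimate, and we conclude $F_c=0$ on $\sM(\bfn)$. One should also remember the $\ceq$ convention: the formula is a sum of the displayed terms and their images under $\bfu\leftrightarrow\bfv$, but the swapped terms are of exactly the same triple-product shape, so they vanish too.

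For the subset $\{\bfu=-\bfv\}$, I would substitute $\bfv=-\bfu$ and $d\bfv=-d\bfu$ (and $v=u$) into the formula. The first term becomes $8(3-c)\frac{u^2}{u}\T\bfu,d\bfu,d\bfu,=8(3-c)\,u\,\T\bfu,d\bfu,d\bfu,$, which does not obviously vanish; but under $\bfu\leftrightarrow\bfv$ it maps to $8(3-c)u\,\T(-\bfu),d(-\bfu),d(-\bfu),=-8(3-c)u\,\T\bfu,d\bfu,d\bfu,$, so the two cancel in the $\ceq$ sum. For the bracketed terms, $du\we\T\bfu,\bfv,d\bfv,$ becomes $du\we\T\bfu,-\bfu,-d\bfu,=du\we\T\bfu,\bfu,d\bfu,=0$ since a triple product with a repeated vector slot vanishes, and likewise $u\,\T\bfv,d\bfu,d\bfv,=u\,\T-\bfu,d\bfu,-d\bfu,=u\,\T\bfu,d\bfu,d\bfu,$ while its $\bfu\leftrightarrow\bfv$ partner contributes the negative; so everything cancels. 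Alternatively, and perhaps more cleanly, one can invoke Corollary \ref{van}'s first statement: the line $\{\bfu=-\bfv\}$ (minus the origin) is precisely $\sM(\bfn)\cap\{\bfu=-\bfv\}$ for a dense set of $\bfn$ — indeed $\bfu\cd\bfn=0=\bfv\cd\bfn$ holds whenever $\bfn\perp\bfu$ — so $F_c$ restricted to this line is a restriction of forms already known to vanish. Finally, the parenthetical remark that $\{\bfu=-\bfv\}\subset\sF_-$ is immediate from Definition \ref{align}: there $\bfA_-=v\bfu+u\bfv=u(\bfu+\bfv)=\bf0$ when $\bfv=-\bfu$.

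The main obstacle is essentially bookkeeping rather than depth: one must be careful that the differential $1$-forms $du,dv$ appearing in the formula (e.g. in $du\we\T\bfu,\bfv,d\bfv,$) are themselves \emph{not} constrained to lie in $\bfn^\perp$ on $\sM(\bfn)$ — indeed $du$ is a scalar $1$-form, not an $\R^3$-valued one — so the coplanarity argument applies only to the genuinely vector-valued slots of each triple product; the term $du\we\T\bfu,\bfv,d\bfv,$ vanishes because its triple-product factor $\T\bfu,\bfv,d\bfv,$ already does (its three vector slots $\bfu,\bfv,d\bfv$ lie in $\bfn^\perp$), irrespective of the prefactor $du$. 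With that caveat observed, both assertions reduce to the elementary fact that coplanar vectors have vanishing scalar triple product.
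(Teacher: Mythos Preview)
Your main argument is correct and matches the paper's own proof exactly: both parts follow from Theorem \ref{F}, the first by observing that all vector slots in the triple products take values in the $2$-plane $\bfn^\perp$ (so the triple products vanish), and the second by checking that each term either vanishes outright or cancels against its $\bfu\leftrightarrow\bfv$ partner under the $\ceq$ symmetrization. Your care in distinguishing the scalar $1$-form $du$ from the $\R^3$-valued slots is well placed.

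One small caveat: your ``alternative'' argument for $\{\bfu=-\bfv\}$ does not work as stated. The locus $\{\bfu=-\bfv\}$ is a $3$-dimensional subspace, not a line, and it is not contained in any single $\sM(\bfn)$; while each point does lie in some $\sM(\bfn)$, knowing that a $2$-form pulls back to zero on a family of $4$-planes through a point does not by itself force it to vanish on an arbitrary $3$-plane through that point (one would need the tangent $3$-space to be covered by $2$-planes each lying in some $T\sM(\bfn)$, and then argue pairwise). Since your direct term-by-term cancellation already settles the matter, simply drop the alternative.
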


\begin{proof}
These statements are consequences of Theorem \ref{F}. In the first case, $\bfu$
and $\bfv$ are constrained to lie in a common 2-dimensional subspace of $\R^3,$
{\gr as are their derivatives.} It follows that all the triple products
vanish. In the second case, all the triple products change sign (or are zero)
when $\bfu$ and $\bfv$ are interchanged.
\end{proof}

The first Chern class of the bundle $\pq$ over $S^2\times S^2$ is
given by
\[ \chern_1(\pq) = \frac1{2\pi}[d\Th_c] = \frac1{2\pi}[F_c].\]
Now take $c=1$. From the discussion after Lemma \ref{star}, we know
that $\qart\T\bfu,d\bfu,d\bfu,$ integrates to $-2\pi$ over $S^2$. The
bidegree of $\chern_1(\pq)$ over $S^2\times S^2$ is therefore
$(-1,-1)$. The same conclusion must be valid for other values of $c,$
and we can do a consistency check by restricting the formula for $F_2$
to the diagonal sphere $\Delta = \{(\bfu,\bfu):\bfu\in S^2\}$. Bearing
in mind Notation \ref{triple}, we obtain $F_2|_\Delta =
\half\T\bfu,d\bfu,d\bfu,,$ so
\[ \frac1{2\pi}\!\int_\Delta F_2 = -2.\]
This is what one expects since the associated complex line bundle over
$\Delta\cong\CP^1$ is isomorphic to $\sO(-1)\ot\sO(-1)=\sO(-2)$.

Since $F_1$ and $F_c$ are equal in cohomology, their difference must
be exact, and this is made explicit by the next result, whose proof we
omit:

\begin{proposition}
On $S^2\times S^2,$
\[ \wF_c\ \ceq\ \qart\T\bfu,d\bfu,d\bfu, + 
d\Big(\frac{1-c}{(3\-c)N_c}\T\bfu,\bfv,d\bfu,\Big).\]
\end{proposition}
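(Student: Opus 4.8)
The plan is to proceed exactly as in the proof of Theorem \ref{F}, but to keep the full $\widetilde{\phantom Y}$-symmetric notation and exhibit the correcting potential explicitly. Recall from that proof that on $S^2\times S^2$ we have $\wF_1\ceq\qart\T\bfu,d\bfu,d\bfu,$ and
\[ N_c\!^2\,\wF_c\ \ceq\ 8(3\-c)\T\bfu,d\bfu,d\bfu,+4(1\-c)\T\bfu,d\bfu,d\bfv,.\]
Since both $\wF_1$ and $\wF_c$ represent $\chern_1(\pq)$ and hence are cohomologous, their difference is exact; the task is merely to produce the primitive. First I would take $d$ of the claimed potential $\frac{1-c}{(3-c)N_c}\T\bfu,\bfv,d\bfu,$ on $S^2\times S^2$, using the last identity in Notation \ref{triple}, namely $\T\bfu,d\bfu,d\bfv,=d\T\bfu,\bfv,d\bfu,+2\T\bfv,d\bfu,d\bfu,$, together with the restriction $u=v=1$ (so that $d(N_c)$ is expressible through $\cuv$) and the constraints $\bfu\cd d\bfu=0=\bfv\cd d\bfv$ coming from $|\bfu|=|\bfv|=1$.

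The key computation is therefore to differentiate $\frac{1-c}{(3-c)N_c}$ times $\T\bfu,\bfv,d\bfu,$. Writing $N_c|_{S^2\times S^2}=2(5-c)-2(1-c)\cuv$, one gets $d(N_c^{-1})=2(1-c)N_c^{-2}\,d(\cuv)$, and $d(\cuv)=\bfu\cd d\bfv+\bfv\cd d\bfu$. The product rule then yields two pieces: one proportional to $N_c^{-2}\,d(\cuv)\we\T\bfu,\bfv,d\bfu,$ and one proportional to $N_c^{-1}\,d\T\bfu,\bfv,d\bfu,$. For the second piece I replace $d\T\bfu,\bfv,d\bfu,$ by $\T\bfu,d\bfu,d\bfv,-2\T\bfv,d\bfu,d\bfu,$; after $\widetilde{\phantom Y}$-symmetrization the term $\T\bfv,d\bfu,d\bfu,$ contributes in tandem with its partner $\T\bfu,d\bfv,d\bfv,$, and one must check that these combine (via the sphere constraints) into a multiple of $\T\bfu,d\bfu,d\bfv,$ plus something that cancels against the first piece. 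The upshot should be that
\[ d\Big(\frac{1-c}{(3-c)N_c}\T\bfu,\bfv,d\bfu,\Big)\ \ceq\ \frac{4(1-c)}{N_c^2}\T\bfu,d\bfu,d\bfv, - \Big(\text{multiple of }\T\bfu,d\bfu,d\bfu,\Big),\]
the latter multiple being exactly what reconciles $\frac{8(3-c)}{N_c^2}\T\bfu,d\bfu,d\bfu,$ with $\qart\T\bfu,d\bfu,d\bfu,$.

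The main obstacle is the bookkeeping in the term $N_c^{-2}\,d(\cuv)\we\T\bfu,\bfv,d\bfu,$: one must expand $d(\cuv)\we\T\bfu,\bfv,d\bfu,$ in components and use the identities relating $\T\bfu,\bfv,d\bfu,\we d\bfu$-type expressions to $\T\bfu,d\bfu,d\bfu,$ and $\T\bfu,d\bfu,d\bfv,$ on the 4-manifold $S^2\times S^2$ (where only four of the $du_i,dv_j$ are independent, the two relations being $\bfu\cd d\bfu=0$, $\bfv\cd d\bfv=0$). This is precisely the kind of $\3$-equivariant triple-product algebra already used repeatedly (as in Lemma \ref{gc} and Theorem \ref{F}), so it is routine but needs care with signs and with the fact that $d$ of a function times a $\ceq$-symmetric form is again $\ceq$-symmetric. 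Once the identity is verified on $S^2\times S^2$, nothing further is needed, since the Proposition is stated only there; no homogeneity/rescaling argument à la Theorem \ref{F} is required. I would present the result by simply recording the outcome of this differentiation and remarking that it is a direct check, which is why the authors omit the details.
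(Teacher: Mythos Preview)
The paper explicitly omits the proof of this proposition (``whose proof we omit''), so there is no argument in the text to compare your plan against. Your approach---take the known formula $N_c^2\wF_c\ceq 8(3\-c)\T\bfu,d\bfu,d\bfu,+4(1\-c)\T\bfu,d\bfu,d\bfv,$ from the proof of Theorem~\ref{F}, differentiate the proposed primitive using the identity $d\T\bfu,\bfv,d\bfu,=\T\bfu,d\bfu,d\bfv,-2\T\bfv,d\bfu,d\bfu,$ from Notation~\ref{triple}, and match terms after $\ceq$-symmetrization---is exactly the natural direct verification and is sound.

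One small correction: from Proposition~\ref{th} you have $N_c=2(5\-c)uv+2(1\-c)\cuv$, so on $S^2\times S^2$ it is $N_c=2(5\-c)+2(1\-c)\cuv$ with a \emph{plus} sign, not a minus; consequently $d(N_c^{-1})=-2(1\-c)N_c^{-2}\,d(\cuv)$. This sign feeds through the rest of the bookkeeping, so fix it before carrying out the expansion. Otherwise your plan is correct and complete.
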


\noindent The restriction of the circle bundle $\pq$ to $S^2\times
S^2$ is homeomorphic to the cone over $S^2\times S^3,$ the basic
Sasaki-Einstein manifold $T^{1,1}$ (see \cite[Appendix A]{GMSW}).
\smallbreak

\begin{remark}\rm
If we choose to identify $\sM$ with $\C^3\smz$ by $\buv\mapsto
\bfu+i\bfv,$ we can define an $\3$-equivariant mapping
$\sM\to\CP^2$. A slice to the $\1$ orbits is given by
\[\{\buv\in\sM: \bfu\cdot\bfv=0\},\]
and the equation $u=v$ then determines a conic curve $C\cong\CP^1$. This set-up
was used by Li to construct a new $\3$-invariant K\"ahler-Einstein metric on
$\CP^2\sm C$ with cone angle $2\pi/3$ along $C$ \cite{Li} (see also
\cite{DS,LS,MSp}). It gives rise to a Sasaki-Einstein metric of
cohomogeneity-one on the link of the singularity $z_1^2+z_2^2+z_3^2+z_4^3=0,$
but this example is not compatible with our geometry; in particular the action
of $\1$ on $\C^3$ is not an isometry for any $c$.
\end{remark}

\setcounter{equation}0
\section{The reduced twistor fibration}\label{SO3}

We shall continue to analyse the action of $\3$ in this section, but
in relation to the twistor fibration $\pi\colon\CP^3\to S^4$.
Consider the lower part of Figure 1 in the Introduction, in which
$ D^3\cong S^4/SO(2)$ is the closed unit ball in the subspace $\R^3$ of
$\R^5$ fixed by the action of $SO(2)=\1/\Z_2$. We may identify its
boundary 2-sphere $\pD$ with the fixed point set $\Sf$ of $SO(2)$ in
$S^4$. The map $\varpi$ can be expressed as a composition
\[\varpi\colon\sM\lra\frac{\sM}{\R^+}\cong\frac{\CP^3}{SO(2)}\lra D^3,\]
in which the second map is a reduction of $\pi$. It is
$\3$-equivariant, and symmetric in $\bfu,\bfv$ since $\pi$ commutes
with $j$. In fact, it could not be simpler:

\begin{proposition}\label{ball} 
  \[\varpi\buv=\frac{\bfu+\bfv}{u+v}.\]
\end{proposition}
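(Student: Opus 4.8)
The plan is to compute the composite map $S^7 \to S^4 \cong \partial$-extended ball explicitly using the twistor projection formula \eqref{pi} and then express the result in terms of the Gibbons--Hawking coordinates $\bfu,\bfv$ defined in \eqref{uuu} and \eqref{vvv}. First I would recall that $\varpi$ factors through $\CP^3/SO(2) \cong \sM/\R^+$, and that the map $\CP^3/SO(2) \to D^3$ is the reduction of $\pi\colon\CP^3 \to S^4$ by the $SO(2)$ action, with $D^3 \cong S^4/SO(2)$ the closed unit ball in the $\R^3 \subset \R^5$ fixed by $SO(2)$. Since $\varpi$ is $\R^+$-invariant, it suffices to evaluate it on $\H^2 \smz$ and check homogeneity of degree $0$; the expression $(\bfu+\bfv)/(u+v)$ is visibly degree $0$ under the diagonal $\R^+$, so that is consistent.

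The key computation is to identify the image of $\pi$ in coordinates. From \eqref{pi}, away from $\q_0 = 0$ the twistor map sends $[\bfz]$ to $\q_1 q_0^{-1} \in \H \cong \R^4$, and the induced point of $S^4$ (under stereographic-type identification) is governed by $\q_1\ol\q_0 \in \H$ together with $|q_0|^2, |q_1|^2$. Concretely, the point of $S^4 \subset \R^5 = \R\oplus\H$ is $\big(|q_0|^2 - |q_1|^2,\ 2\q_1\ol{\q_0}\big)/R$ with $R = |q_0|^2+|q_1|^2$. The $SO(2)$ action is $\bfq \mapsto e^{i\theta}\bfq$, i.e. it acts by left multiplication by $e^{i\theta}$ on each of $\q_0 = \z_0 + j\z_1$ and $\q_1 = \z_2 + j\z_3$; writing $\q_1\ol{\q_0}$ in the $\z$-coordinates and tracking the $e^{i\theta}$-weights isolates the $SO(2)$-invariant part, which is exactly the projection to the fixed $\R^3 \subset \R^5$. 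I expect the invariant combination to be built from $\Re(\z_0\oz_2)$, $\Im(\z_0\oz_2)$, $\Re(\z_1\oz_3)$, $\Im(\z_1\oz_3)$ and the moduli $|\z_i|^2$ — precisely the quantities appearing in \eqref{uuu} and \eqref{vvv} defining $\u_i$ and $\v_j$. Assembling: the first coordinate gives $|\z_0|^2-|\z_2|^2 + |\z_1|^2 - |\z_3|^2 = \u_1+\v_1$ (using $\u_1 = |z_0|^2-|z_2|^2$, $\v_1 = |z_1|^2 - |z_3|^2$, noting $\mu_1 = u-v$ and $u+v = R$), and the remaining two coordinates give $\u_2+\v_2$ and $\u_3+\v_3$, up to the sign conventions already flagged after \eqref{vvv}. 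Dividing by the normalization $R = u+v$ yields $(\bfu+\bfv)/(u+v)$.

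The main obstacle is bookkeeping the signs and conjugations: the choice \eqref{zzzz} introduces complex conjugates into the moment maps, there is the ``contrasting sign'' for $\u_3$ versus $\v_3$ noted after \eqref{vvv}, and the twistor formula \eqref{pi} itself carries a highlighted sign in the $j$-component $\ol z_0 z_3 - z_1 \ol z_2$. One must check that all of these conspire so that the three $SO(2)$-invariant components of $2\q_1\ol\q_0$ (projected to the fixed $\R^3$) line up exactly with $\u_{2,3} + \v_{2,3}$ rather than some mixture with relative minus signs. I would handle this by fixing once and for all the identification of the fixed $\R^3 \subset \R^5$ — using that $X\ip\al_1 = \mu_1 = u-v$ identifies the ``$\R^2$-versus-$\R^3$'' splitting of $\R^5$, so the $\R^3$ direction is spanned by $\mu_1 = u-v$ (wait: rather, the fixed $\R^3$ is where $SO(2)$ acts trivially, which in the $\R^2\oplus\R^3 = \R^5$ decomposition is the $\R^3$; the point $\varpi\buv$ must be $SO(2)$-invariant and hence land there) — and then simply reading off coordinates. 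A sanity check at the end: when $\bfv = \bfu$ (the diagonal), $\varpi$ should give $\bfu/u$, a unit vector, i.e. a point of $\partial D^3 = \Sf$; and indeed $\{\bfu = \bfv\}$ should correspond to $\q_1 = 0$ or $\q_0 = 0$ type loci lying over the fixed $S^2$, consistent with the earlier remark that $\Sf = S^4 \cap \R^3$ is the fixed point set. Another check: the preimage $\varpi^{-1}(\mathbf 0)$ should be $\{\bfu = -\bfv\}$, matching the statement (elsewhere in the paper) that $\{\bfu = -\bfv\}$ is the subset over which $\3$ acts trivially and which sits at the center of the ball.
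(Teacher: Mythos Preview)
Your approach is correct and essentially the same as the paper's: compute the twistor map $\pi\colon\CP^3\to S^4\subset\R^5$ explicitly, identify the three coordinates fixed by the $SO(2)$ action, and match them with $(\u_1+\v_1,\u_2+\v_2,\u_3+\v_3)/R$. The paper packages the computation slightly differently, via the $Sp(2)$-equivariant map $\bfz\mapsto\bfz\wedge j^*\bfz\in\La^2\C^4$ rather than the quaternionic Hopf formula $\big(|q_0|^2-|q_1|^2,\ 2\q_1\ol{\q_0}\big)/R$ that you use; this makes the $\3$-equivariance of $\varpi$ automatic, whereas in your version it emerges only after the coordinate match. One small correction to your closing sanity check: the locus $\{\bfu=\bfv\}$ does not correspond to $\q_0=0$ or $\q_1=0$ (those give $u=0$ or $v=0$), but rather lies inside $\sF_+=\{v\bfu=u\bfv\}$, the cone over $\pi^{-1}(\Sf)$; your conclusion that $\varpi$ lands on $\partial D^3$ there is nonetheless right.
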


\begin{proof}\gr
Recall the antilinear involution $j$ in \eqref{j*z} that defines an
identification $\C^4=\H^2$. Consider the $Sp(2)$-equivariant mapping
$\C^4\to\La^2\C^4$ defined by
\[\ba{rcl} \bfz
&\mapsto& \bfz\we(j^*\bfz)\y
&=& -(\z_0,\z_1,\z_2,\z_3)\we(\oz_1,-\oz_0,\oz_3,-\oz_2)\y 
&=& \big(|\z_0|^2\+|\z_1|^2,\ -\z_0\oz_3\+\z_2\oz_1,\ \z_0\oz_2\+\z_3\oz_1,\
-\z_1\oz_3\-\z_2\oz_0,\ \z_1\oz_2\-\z_3\oz_0,\ |\z_2|^2\+|\z_3|^2\big). 
\ea\]
In the last line, we have used an obvious basis for the exterior product
$\La^2\C^4,$ and we denote the corresponding coordinates by
$\zeta_1,\ldots,\zeta_6,$ so that $\zeta_1+\zeta_6=\|\bfz\|^2,$ $\zeta_2=
\ol{\zeta_5}$ and $\zeta_3=-\ol{\zeta_4}$. These coordinates are unaffected by
changing the phase of $\bfz,$ and
\be{Pi} \pi([\bfz]) = \frac1{\|\bfz\|^2}
\big(2\Re\zeta_5,\ -2\Im\zeta_5,\ \zeta_1-\zeta_6,\ -2\Re\zeta_4,\ -2\Im\zeta_4
\big)\ee
is a unit vector in $\R^5$. The associated representation of $Sp(2)$ on $\R^5$
defines the double covering $Sp(2)\to SO(5),$ and $\pi$ is a well-defined
mapping $\CP^3\to S^4$. It also follows that $\pi$ coincides with the twistor
fibration determined by \eqref{pi}, given that the latter involves the
expression $-\zeta_4-j\zeta_5$.

Note that $\1$ rotates the first two coordinates on the right-hand side of
\eqref{Pi}, and acts trivially on the last three. It follows that $\varpi\buv$
can be identified with the vector in $\R^3$ with coordinates
\[ \frac1{\|\bfz\|^2}\big(\zeta_1-\zeta_6,\ -2\Re\zeta_4,\ -2\Im\zeta_4\big) =
\frac1R(\u_1+\v_1,\ \u_2+\v_2,\ \u_3+\v_3) = \frac{\bfu+\bfv}{u+v}.\]
Our choice of signs in \eqref{Pi} was dictated by the convention \eqref{pi},
and ensures that $\varpi$ is $\3$-equivariant.
\end{proof}

Observe that
\[ \varpi\buv = \frac u{u+v}\bfs+\frac v{u+v}\bft\] lies on the chord
with unit endpoints $\bfs=\bfu/u$ and $\bft=\bfv/v$ in $\pD$. The
generic fibre of $\varpi$ has dimension 3, but reduces to dimension 2
{\gr over} $\pD$. For future reference, we note that the radius
\be{sv}   s=|\varpi\buv|=\frac1R|\bfu+\bfv|\ee
in $\pD$ defines a function $S^4\to[0,1]$. It vanishes on the circle
$\Sm=S^4\cap\R^2,$ which is the fixed point set of the $\3$ action
(and maximal $SO(2)$ orbit), and reaches the extreme value $1$ on the
fixed point set $\Sf$ of the $SO(2)$ action.

Definition \ref{align} underlies many properties of $\varpi$ and $Q,$
and of the tensors defined on the spaces that feature in Figure 1. It
follows from Proposition \ref{ball} that
\be{inv+} \sF_+=\varpi^{-1}(\pD),\ee
and $\pq^{-1}(\sF_+)$ is a cone over the inverse image
\[ \pi^{-1}(\Sf)\cong S^2\times S^2\subset\CP^3\]
that we shall next realize as a non-holomorphic quadric. By contrast,
$\sF_-$ projects \emph{onto} $S^4$ and $ D^3,$ and arises from a
\emph{holomorphic} quadric intimately connected to the $SO(2)$ action
on the 4-sphere.

\begin{proposition}\label{pD} 
 Set $f_+=\z_0\oz_3-\oz_1\z_2$ and $f_-=\z_0\z_1+\z_2\z_3$. Then
\[\pq^{-1}(\sF_\pm) = \big\{[\bfz]\in\sC: f_\pm=0\big\}.\]
\end{proposition}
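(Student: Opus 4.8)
The plan is to prove each equality $\pq^{-1}(\sF_\pm) = \{[\bfz]\in\sC: f_\pm=0\}$ separately, exploiting the explicit coordinate descriptions already established. For $\sF_+$, recall from \eqref{inv+} that $\sF_+=\varpi^{-1}(\pD)$, so $\pq^{-1}(\sF_+)$ is the cone over $\pi^{-1}(\Sf)$. Since $\Sf=S^4\cap\R^3$ corresponds to the vanishing of the first two coordinates of the twistor image \eqref{Pi}, the fibre $\pi^{-1}(\Sf)$ is cut out by $\Re\zeta_5=\Im\zeta_5=0$, i.e.\ by $\zeta_5=\oz_0\z_2-\oz_1\z_3=0$ (reading off the sign conventions from the computation in the proof of Proposition \ref{ball}), equivalently $\z_0\oz_2-\z_1\oz_3=0$. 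A short check against the definition $f_+=\z_0\oz_3-\oz_1\z_2$ — or rather its conjugate, modulo the relation $\zeta_3=-\ol{\zeta_4}$ used there — identifies this with $f_+=0$; the key observation is that $f_+$ is precisely the component $-\zeta_4$ (or $\zeta_3$) in the basis of $\La^2\C^4$ used above, and $\zeta_4=0 \iff \zeta_5=0$ is forced on $\pi^{-1}(\Sf)$ only after one notes that on that fibre $\bfu+\bfv$ must be the \emph{radial} vector in $\R^3$. I would therefore verify directly, using the real formulae \eqref{uuu} and \eqref{vvv} for $\bfu,\bfv$, that $\bfA_+=v\bfu-u\bfv=\bf0$ holds exactly when $\z_0\oz_2$ and $\z_1\oz_3$ have a specific proportionality forcing $f_+=0$; the cleanest route is to observe $|\bfu|=u$, $|\bfv|=v$ automatically, so $\bfA_+=\bf0 \iff \bfu/u=\bfv/v$, and then translate $\bfs=\bft$ into the complex data.

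For the $\sF_-$ case I would argue in parallel. From Definition \ref{align}, $\sF_-=\{\bfA_-=\bf0\}=\{v\bfu+u\bfv=\bf0\}$, which (away from $uv=0$) says $\bfu$ and $\bfv$ are anti-aligned: $\bfu/u=-\bfv/v$. Writing this out via \eqref{uuu}, \eqref{vvv}: $|\z_0|^2-|\z_2|^2 = -(|\z_1|^2-|\z_3|^2)$, $\Re(\z_0\oz_2)=-\Re(\z_1\oz_3)$, and $\Im(\z_0\oz_2)=\Im(\z_1\oz_3)$ (note the sign flip in the third coordinate, flagged in the text right after \eqref{vvv}). The first condition gives $|\z_0|^2+|\z_1|^2=|\z_2|^2+|\z_3|^2$, and the last two combine — using the sign discrepancy in $\u_3$ versus $\v_3$ — into $\z_0\oz_2=-\ol{\z_1\oz_3}$, i.e.\ $\z_0\oz_2+\ol{\z_1}\ol{\oz_3}\cdot(\ldots)$; the upshot I expect is that these three real equations are together equivalent to the single complex equation $\z_0\z_1+\z_2\z_3=0$, that is $f_-=0$. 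The homogeneity in the phase of $\bfz$ makes the condition descend to $\CP^3$ and, after coning, to $\sC$.

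The step I anticipate as the main obstacle is \textbf{bookkeeping the complex conjugates and sign conventions} — specifically, pinning down exactly how $\bfA_\pm=\bf0$ translates into a \emph{holomorphic} (for $f_-$) versus \emph{anti-holomorphic} (for $f_+$, since $f_+=\z_0\oz_3-\oz_1\z_2$ is not holomorphic) equation, given the asymmetry between $\u_3$ and $\v_3$ noted after \eqref{vvv}. A small subtlety is the locus $uv=0$: there $\bfA_+=\bfA_-=\bf0$ simultaneously (it is $\sF_+\cap\sF_-=\R^3\cup\R^3$), and one must check the cut-out equations $f_\pm=0$ remain valid there by a limiting argument or by direct inspection of, say, $\z_2=\z_3=0$ (so $v=0$), where both $f_+=\z_0\oz_3-\oz_1\z_2$ and $f_-=\z_0\z_1+\z_2\z_3$ reduce appropriately. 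I would handle this by first proving the equivalence on the open dense set $uv\ne0$ using the proportionality $\bfs=\pm\bft$, and then remarking that both sides of each claimed identity are closed conditions, so the equality extends by continuity to all of $\sC$.
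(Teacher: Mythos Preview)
Your sketch for $\sF_+$ via the twistor map is a valid and genuinely different route from the paper's. The confusion you flag is easily resolved: from the display in the proof of Proposition~\ref{ball} one reads off $\zeta_2=-z_0\bar z_3+z_2\bar z_1=-f_+$ and $\zeta_5=\bar\zeta_2=-\bar f_+$, so $\zeta_5=0\iff f_+=0$ directly, with no need to invoke $\zeta_3$ or $\zeta_4$. The ``radial'' worry you raise is a non-issue: $\pi$ already lands in $S^4$, so $\pi^{-1}(\Sf)$ is exactly the locus where the first two coordinates in \eqref{Pi} vanish.

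Your $\sF_-$ argument, however, contains a genuine error. The condition $\bfA_-=\bf0$ is $v\bfu+u\bfv=\bf0$, i.e.\ $v\,u_i=-u\,v_i$, \emph{not} $u_i=-v_i$. So the equations you write down (``$|z_0|^2-|z_2|^2=-(|z_1|^2-|z_3|^2)$'' etc.) are the wrong ones; in particular your deduction $|z_0|^2+|z_1|^2=|z_2|^2+|z_3|^2$ does not follow from anti-alignment. With the correct equations $v\,u_i+u\,v_i=0$ the reduction to $f_-=0$ is far from transparent, and your ``the upshot I expect'' step is exactly the hard part you have not done.

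The paper avoids all of this bookkeeping by proving the single identity
\[
2|f_\pm|^2 \;=\; uv\mp\bfu\cdot\bfv \;=\; a_\pm,
\]
via a direct Euclidean computation (writing $x_ix_j$ as $x_{ij}$ and completing squares). Since $|\bfA_\pm|^2=2uv\,a_\pm$ was already established after \eqref{a+-}, one gets $\bfA_\pm=\bf0\iff a_\pm=0\iff f_\pm=0$ in one stroke, uniformly for both signs, with no case analysis on $uv=0$ and no conjugate-tracking. Your twistor argument for $\sF_+$ is conceptually nicer but case-specific; the paper's identity $2|f_\pm|^2=a_\pm$ is what makes the $\sF_-$ case tractable and is worth proving in its own right.
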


\newcommand{\X}[2]{\x_{#1#2}}
\newcommand{\XX}[4]{\x_{#1#2#3#4}}

\begin{proof}
Square brackets in the line above represent the $\1_1$ quotient
$\C^4\smz\to\sC$ generated by $Y_1$. We shall in fact prove that
\be{2ff} 2|f_\pm|^2 = uv\mp\cuv,\ee
a quantity that was denoted by $\aa_\pm$ in \eqref{a+-}. This clearly
suffices, and we only need to verify the equation for $f_+$ since the
other then follows from \eqref{vvv}.

Using the Euclidean coordinates on $\R^8$ defined by \eqref{qq}, we
abbreviate $\x_i\x_j$ by $\X ij$ and $\x_i\x_j\x_k\x_l$ by $\XX
ijkl$. As a first step, $uv-\u_1\v_1$ equals
\[(\X00+\X11+\X44+\X55)(\X22+\X33+\X66+\X77) - 
(\X00+\X11-\X44-\X55)(\X22+\X33-\X66-\X77),\]
which is twice
\[\ba{c}
\XX0066+\XX0077+\XX1166+\XX1177+\XX4422+\XX4433+\XX5522+\XX5533\y
\hskip50pt = (\X06-\X17)^2+(\X16+\X07)^2+(\X24-\X35)^2+(\X25+\X34)^2.
\ea\]
On the other hand, $\u_2\v_2+\u_3\v_3$ equals 4 times
\[(\X04+\X15)(\X26+\X37)+(\X05-\X14)(\X27-\X36)
 =(\X06-\X17)(\X24-\X35)+(\X16+\X07)(\X25+\X34).\] 
Therefore $uv-\cuv$ equals twice
 \[ (\X06-\X17-\X24+\X35)^2+(\X16+\X07-\X25-\X34)^2,\] 
 and the two terms in parentheses are the real and imaginary
 components of $f_+$.
\end{proof}

The almost complex structures $J_1,J_2$ (only the first integrable)
and the Einstein metrics $\wh_1,\wh_2$ on $\CP^3$ coincide on the
horizontal distribution
\[ D = \left<\al_2,\al_3\right>^\mathrm{o},\]
which at each point is the orthogonal complement to the vertical
tangent space to the fibration $\pi\colon\CP^3\to S^4,$ with respect
to any of the metrics $\wh_c$. The next result follows from the
equation
  \[ X\ip(\al_2-i\al_3) = \mu_2-i\mu_3 = 2i(\z_0\z_1+\z_2\z_3)\]
(see \eqref{mus}) and Proposition \ref{pD}:

\begin{corollary}\label{hor}
$\sF_-$ is the locus of points in $\sM$ over which the circle fibres
  are horizontal relative to $\pi$.
\end{corollary}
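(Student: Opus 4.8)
The plan is to unwind the definitions so that the statement becomes a pointwise comparison of one-forms on $\sC$ (or on a fibre of $\pi$), after which it reduces to the already-proven description of $f_-$. First I would recall that the horizontal distribution $D$ for $\pi$ is, by definition, the annihilator of $\langle\al_2,\al_3\rangle$; equivalently, a vector is vertical for $\pi$ precisely when it is killed by $\al_2$ and $\al_3$ but not by $\al_1$ (or, more carefully, the vertical tangent space to $\pi$ is spanned by $Y_2,Y_3$ together with the radial direction's complement — in any case it is the $\wh_c$-orthogonal complement of $D$, independent of $c$). The circle fibres of $\pq$ are generated by the Killing field $X$, so ``the circle fibre at a point is horizontal relative to $\pi$'' means exactly that $X$ lies in $D$ at that point, i.e.\ that $X\ip\al_2=0$ and $X\ip\al_3=0$, that is $\mu_2=\mu_3=0$.

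Next I would invoke the displayed identity $X\ip(\al_2-i\al_3)=\mu_2-i\mu_3=2i(\z_0\z_1+\z_2\z_3)=2i\,f_-$, which follows from the formulas \eqref{mus} for the $\mu_j$ (a routine rewriting in the complex coordinates \eqref{zzzz}); this is precisely the equation quoted just before the corollary. Hence $X$ is horizontal for $\pi$ at a point if and only if $f_-=0$ there. By Proposition \ref{pD}, the locus $\{f_-=0\}$ in $\sC$ is exactly $\pq^{-1}(\sF_-)$. Since the condition ``$X$ is $\pi$-horizontal'' is $\1$-invariant (both $X$ and $D$ are preserved by the flow of $X$, as $\sL_X\al_i$ is controlled by the computations in Section \ref{T2}), it descends to a condition on points of $\sM$, and the locus it cuts out is $\pq(\{f_-=0\})=\sF_-$. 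This is the assertion of the corollary.

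The only point that needs a word of care — and the place I would expect a referee to push back — is the clean identification of ``the circle fibre is horizontal in $\CP^3$'' with the pointwise algebraic condition $\mu_2=\mu_3=0$, rather than with a weaker statement about the image of $X$ under the projection $d\pi$. I would address this by noting that $X$ is nowhere vertical for $\pi$ (its projection to $\CP^3$ is a nontrivial rotation of $S^4$ that does not fix the relevant twistor lines generically), so $X\in D$ at a point is equivalent to $d\pi(X)$ being $\wh_c$-horizontal there; and since $D=\langle\al_2,\al_3\rangle^{\mathrm{o}}$, membership in $D$ is literally the vanishing of $X\ip\al_2$ and $X\ip\al_3$. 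With that clarified, the corollary is immediate from Proposition \ref{pD} and the boxed identity, and no further computation is required.
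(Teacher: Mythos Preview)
Your proposal is correct and follows essentially the same approach as the paper: the paper derives the corollary directly from the identity $X\ip(\al_2-i\al_3)=\mu_2-i\mu_3=2i(\z_0\z_1+\z_2\z_3)$ together with Proposition~\ref{pD}, which is exactly your argument. Your additional paragraph about $X$ being nowhere vertical is unnecessary (the condition ``horizontal relative to $\pi$'' simply means $X\in D=\langle\al_2,\al_3\rangle^{\mathrm o}$, with no caveat required), but it does no harm.
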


A quadratic form in the $\z_i$'s can be regarded as a holomorphic
section of 
\[ H^0(\CP^3,\sO(2))\cong\sp(2,\C),\] 
which can in turn be identified with the complexification of the Lie
algebra
\[ \sp(2)=\fu(2)\op\fm = \su(2)\op\fu(1)\op\fm\]
of Killing vector fields on $S^4$. Here $\fu(1)$ is spanned by the
vector field $X_*=\pi_*X$ on $S^4$ that generates our circle action,
and $f_-$ is invariant by $U(2)$. The associated quadric $Z_-$ in
$\CP^3$ is the divisor defined by $X_*,$ whose equation is determined
by the self-dual component of the 2-form $\nabla X_*$
\cite[ch.~13]{Besse}.

Now that we have identified the $\fu(1)$ summand, it is easy to see
that Lie subalgebra $\su(2)$ is generated by the quadrics
\[\ba{rcl} 
Z_1 &=& \{\z_0\z_1-\z_2\z_3=0\}\\
Z_2 &=& \{\z_0\z_3+\z_1\z_2=0\}\\
Z_3 &=& \{\z_0\z_3-\z_1\z_2=0\}.\ea\]
All four equations representing $Z_-,Z_1,Z_2,Z_3$ are $j$-invariant,
which is equivalent to asserting that they define \emph{real} elements
in $\sp(2,\C)$. Each quadric defines a \emph{non-constant} orthogonal
complex structure on
\[ S^4\sm S^1\ \cong\ S^2\times\C^+,\]
compatible with a scalar flat K\"ahler metric on the product
\cite{Pont,SV}. In each case, the discriminant locus is a circle
consisting of points in $S^4$ whose twistor fibres lie in $Z_i$. For
$Z_-,$ we have denoted this circle by $\Sm$.

An analogue of Proposition \ref{pD} can be proved in the same way:

\begin{proposition}\label{sFi}
  We have $Z_i=\pq^{-1}(\sF_i),$ where
\[ \sF_i = \big\{\buv\in\sM: \cuv=uv+2\u_i\v_i\big\}.\]
\end{proposition}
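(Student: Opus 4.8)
The plan is to imitate the proof of Proposition~\ref{pD} essentially verbatim, replacing the quadratic form $f_+=\z_0\oz_3-\oz_1\z_2$ by each of the quadratic forms $g_1=\z_0\z_1-\z_2\z_3$, $g_2=\z_0\z_3+\z_1\z_2$, $g_3=\z_0\z_3-\z_1\z_2$ (so that $Z_i=\{[\bfz]:g_i=0\}$). Since square brackets again denote the $\1_1$ quotient $\C^4\smz\to\sC$ generated by $Y_1$, it suffices to show that
\be{2gg}\ts 2|g_i|^2 = uv+\cuv-2(uv-\u_i\v_i) \;=\; 2\u_i\v_i-(uv-\cuv) \;=\; 2\u_i\v_i+\cuv-uv,\ee
because then $g_i=0$ on $\sM$ is exactly the condition $\cuv=uv+2\u_i\v_i$ defining $\sF_i$. (Equivalently one can phrase this as $2|g_i|^2=\aa_+-2(uv-\u_i\v_i)$, using $2|f_+|^2=\aa_+=uv-\cuv$ from \eqref{2ff}, since then $2|g_i|^2$ and $2|f_+|^2$ together express everything in terms of the quantities $uv$, $\cuv$, $\u_i\v_i$.)

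The computation proceeds exactly as in Proposition~\ref{pD}: pass to the Euclidean coordinates $\x_0,\dots,\x_7$ via \eqref{zzzz}, write out $g_i$ as a complex-linear combination of the monomials $\x_a\x_b$, square its modulus, and compare with the expansions of $uv$, $\cuv$ and $\u_i\v_i$ obtained from \eqref{uuu} and \eqref{vvv}. For instance $g_3=\z_0\z_3-\z_1\z_2=(\x_0+i\x_1)(\x_6-i\x_7)-(\x_2-i\x_3)(\x_4+i\x_5)$, whose real part is $\x_0\x_6+\x_1\x_7-\x_2\x_4-\x_3\x_5$ and whose imaginary part is $-\x_0\x_7+\x_1\x_6-\x_2\x_5+\x_3\x_4$; squaring and adding, then matching against $\u_3\v_3=-4(\x_0\x_5-\x_1\x_4)(\x_2\x_7-\x_3\x_6)$ together with the $uv-\cuv$ expression already computed in the proof of Proposition~\ref{pD}, yields \eqref{2gg} for $i=3$. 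The cases $i=1,2$ are handled identically, using the corresponding coordinate expressions for $\u_1\v_1=(\x_0^2+\x_1^2-\x_4^2-\x_5^2)(\x_2^2+\x_3^2-\x_6^2-\x_7^2)$ and $\u_2\v_2=4(\x_0\x_4+\x_1\x_5)(\x_2\x_6+\x_3\x_7)$.

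The only mild obstacle is bookkeeping: one has eight real coordinates and must keep track of signs so that the final identity really is $2|g_i|^2=2\u_i\v_i+\cuv-uv$ and not some sign-flipped variant; the signs are pinned down by the conventions \eqref{uuu}, \eqref{vvv} (note in particular the ``contrasting signs'' warning after \eqref{vvv}) and by the choice of which $g_i$ corresponds to which index, which must be made consistent with the labelling $Z_1,Z_2,Z_3$ in the list preceding the proposition. A cleaner route that avoids some of this is to observe that $f_-=\z_0\z_1+\z_2\z_3$, $g_1=\z_0\z_1-\z_2\z_3$, $g_2=\z_0\z_3+\z_1\z_2$, $g_3=\z_0\z_3-\z_1\z_2$ are (real multiples of) an orthonormal frame for the $\su(2)$ summand acting on the pair $(\z_0\z_1,\z_2\z_3,\z_1\z_2,\z_0\z_3)$-plane, so that $|f_-|^2+|g_1|^2+|g_2|^2+|g_3|^2$ is $U(2)\times SU(2)$-invariant and must be a universal quadratic in $u,v,\cuv$; evaluating on the locus $\bfv=\bfu$ (where $f_-,g_1,g_2,g_3$ can be computed directly) fixes the constant, and the individual identities \eqref{2gg} then follow by the same $\su(2)$-equivariance that pins down each summand. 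Either way the result is a short determinant-free verification parallel to that of Proposition~\ref{pD}.
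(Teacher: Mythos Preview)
Your approach is exactly what the paper intends (it says only ``An analogue of Proposition~\ref{pD} can be proved in the same way''), but the target identity \eqref{2gg} has a sign error that makes the argument break down. You claim
\[
2|g_i|^2 \;=\; 2\u_i\v_i + \cuv - uv,
\]
but the right-hand side can be negative: take $\bfu=(1,0,0)$, $\bfv=(0,1,0)$, $i=3$, giving $0+0-1=-1$. The correct identity is
\[
2|g_i|^2 \;=\; uv - \cuv + 2\u_i\v_i \;=\; \aa_+ + 2\u_i\v_i,
\]
and \emph{this} is what yields $g_i=0\iff\cuv=uv+2\u_i\v_i$. (Your own deduction of the defining condition from your formula is also off by a sign: from $2\u_i\v_i+\cuv-uv=0$ one gets $\cuv=uv-2\u_i\v_i$, not $uv+2\u_i\v_i$.) Your parenthetical ``equivalent'' version $\aa_+-2(uv-\u_i\v_i)=-uv-\cuv+2\u_i\v_i$ is a third, still different, expression, so the two displayed formulations are not in fact equivalent. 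A quick sanity check with $\bfz=(1,i,i,-1)\in Z_3$ (so $\bfu=(0,0,2)$, $\bfv=(0,0,-2)$) distinguishes the correct identity from yours.

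For the record, the clean derivation is: $|f_-|^2+|g_1|^2=2(|\z_0\z_1|^2+|\z_2\z_3|^2)=uv+\u_1\v_1$, whence $2|g_1|^2=uv-\cuv+2\u_1\v_1$ using $2|f_-|^2=uv+\cuv$; and $|g_2|^2+|g_3|^2=2(|\z_0\z_3|^2+|\z_1\z_2|^2)=uv-\u_1\v_1$ together with $|g_2|^2-|g_3|^2=4\Re(\z_0\z_3\overline{\z_1\z_2})=\u_2\v_2-\u_3\v_3$ (via $\u_2-i\u_3=2\z_0\oz_2$, $\v_2-i\v_3=2\oz_1\z_3$) give the cases $i=2,3$. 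Also, your formula for $\u_3\v_3$ should read $+4(\x_0\x_5-\x_1\x_4)(\x_2\x_7-\x_3\x_6)$, not $-4(\cdots)$.
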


\noindent For example, $\sF_3$ has equation $uv =
\u_1\v_1+\u_2\v_2-\u_3\v_3.$ We shall use this in Section \ref{met}.
\medbreak

Since $\3$ acts diagonally on $\buv\in\sM,$ the $\3$-orbit containing
$\buv\in\sM$ is 2-dimensional (a 2-sphere) if and only if {\gr $\{\bfu,\bfv\}$
  is linearly dependent, i.e.\ $\buv$ belongs to $\sF_+\cup\sF_-$.} It follows
  that any $\3$-orbit in $\pq^{-1}(\sF_-)$ is a 2-sphere inside a quadric,
  though this orbit is a fibre of the twistor fibration over any point of
  $\Sm$. On the other hand, any $\3$ orbit in $\pq^{-1}(\sF_+\sm\sF_-$) is
  3-dimensional.

The restriction of the $\2$ 3-form $\varphi$ (recall Propositon
\ref{exact}) to a 3-dimensional $\3$ orbit $\sO$ in $\sC$ must be a
constant multiple of the volume form. The absence of 3-dimensional
cohomology in $\sC$ implies that this constant multiple must be
zero. Recalling \eqref{a+-} and \eqref{2ff}, set
\be{at} a = a_+ = 2|f_+|^2,\qquad t=\arg f_+.\ee
Since $u,v$ and $f_+=|f_+|e^{it}$ are constant on $\sO,$ it follows
that
\be{simple4} \varphi\we da\we dt\we du\we dv = 0.\ee
The proof of Proposition \ref{ball} tells us that the function $f_+/R$
factors through $\pi,$ so the same is true of $t$ and the
$SO(2)$-invariant function $a/R^2$. In fact, \eqref{sv} implies that
\be{aRs} 1-s^2=\frac{2a}{R^2},\ee
and we record without proof

\begin{lemma}
Let $X^\flat=X_*\ip s_4$ denote the 1-form dual to $X_*$ on
$S^4$. Then
\[ X^\flat = (1-s^2)dt.\]
\end{lemma}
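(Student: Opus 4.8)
The plan is to compute the $SO(2)$-invariant $1$-form $X^\flat = X_*\ip s_4$ explicitly on $S^4$ in coordinates adapted to the reduction, and recognize the result as $(1-s^2)\,dt$. I would work on $\CP^3$ first and push down. Recall that $X\ip\eu = \al_1' + \al_1''$ in Notation \ref{prime}, and that on the unit sphere $R=1$ the metric $s_7$ satisfies $2s_7 = \pi^*s_4 + 2\sum_i\hal_i^2$. Since $X$ is tangent to the $\1$-fibres of $S^7\to\CP^3$ only through its component along $Y_1$, and $\pi_*X = X_*$, the relevant object is the horizontal part of $X\ip s_7$ with respect to the twistor fibration, contracted against $\pi^*s_4$. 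Concretely, I would use that $X\ip\al_1 = \mu_1$ and $X\ip(\al_2\pm i\al_3) = \mu_2\pm i\mu_3 = \mp 2i\,\overline{f_\mp}$ (from \eqref{mus} and the computation in Proposition \ref{pD}, noting $\mu_2 - i\mu_3 = 2i f_-$), so that the component of $X\ip s_7$ transverse to the $\pi$-fibres is governed by the functions $\mu_2,\mu_3$, hence by $f_+$ via \eqref{2ff} and \eqref{at}.

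The key steps, in order: (1) Express $X^\flat$ on $S^4$ by observing that $s_4 = \pi_* s_7$ restricted appropriately, so $X_*\ip s_4$ is the push-forward of the part of $X\ip s_7$ lying in the horizontal distribution $D = \langle\al_2,\al_3\rangle^\circ$; the vertical contributions $\hal_2,\hal_3$ of $X$ do not descend. (2) Identify this horizontal component: write $X$ in terms of the frame dual to $\al_2,\al_3$ on $D$, whose contraction with $X$ yields $\mu_2,\mu_3$; then $X\ip s_7|_D = \mu_2\hal_2 + \mu_3\hal_3$ up to normalization, and this is precisely $\tfrac12 d(\text{phase})$ scaled by $\mu_2^2+\mu_3^2 = 2a_-$ — but we actually want the $f_+$ phase, so I would instead use that the $SO(2)$-invariant function $a/R^2$ and the angle $t = \arg f_+$ both factor through $\pi$ (as established in the paragraph before the lemma), and that $dt$ is, up to a function, the connection $1$-form for the circle action on $S^4\setminus(\Sm\cup\Sf)$. (3) Compute the coefficient: since $X^\flat$ is closed contracted with $X_*$ giving $X_*\ip X^\flat = \|X_*\|^2$ and $X^\flat = \rho\,dt$ for some function $\rho$ (by $SO(2)$-invariance and the fact that $dt$ annihilates everything orthogonal to the orbit), evaluate $\rho = \|X_*\|^2 / (X_*\ip dt)$; using $X_*\ip dt = 1$ (normalizing $t$ as the orbit parameter) gives $\rho = \|X_*\|^2$, and then show $\|X_*\|^2 = 1-s^2$ directly from \eqref{aRs} and \eqref{2ff}, since $\|X_*\|^2$ on $S^4$ should work out to $2a/R^2 = 1-s^2$ by the moment-map/norm relationship inherited from $\mu_2^2+\mu_3^2$ on the reduced side.

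The main obstacle I anticipate is step (2)–(3): pinning down the precise normalization relating the abstract angle $t = \arg f_+$ to the orbit parameter of $X_*$, and confirming that the proportionality function is exactly $\|X_*\|^2$ rather than a multiple of it. This requires care because the $\1$ action on $\R^8$ is a double cover of the effective $SO(2)$ action on $S^4$ (hence the factors of $2$ that pervade Section \ref{T2}), so one must track whether $t$ is the parameter for $\1$ or for $SO(2)$. I would resolve this by evaluating both sides at a convenient test point — e.g.\ a point of $S^4$ with $\bfu$ and $\bfv$ orthogonal and $u=v$, where $s^2 = 1/2$ and $f_+$ has a clean value — and matching. Once the normalization is fixed, the identity $X^\flat = (1-s^2)\,dt$ follows, and as a consistency check one verifies $d X^\flat = -ds^2\we dt$, which should agree with the self-dual curvature $d\hal_1$-type computation underlying \eqref{aRs}; indeed $dX^\flat$ being (a multiple of) the Kähler form restricted to $S^4\setminus\Sm$ is consistent with the scalar-flat Kähler picture of $Z_-$ mentioned after Proposition \ref{sFi}.
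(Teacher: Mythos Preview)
The paper records this lemma \emph{without proof}, so there is no reference argument to compare against; I assess your proposal on its own merits.

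Your overall strategy in step~(3) --- write $X^\flat=\rho\,dt$ and solve for $\rho$ via $\rho=\|X_*\|_{s_4}^2/(X_*\ip dt)$ --- is the natural route. But two things are not in order. First, the assertion that $X^\flat$ is a functional multiple of $dt$ is the heart of the lemma, and your justification (``by $SO(2)$-invariance and the fact that $dt$ annihilates everything orthogonal to the orbit'') is circular: that \emph{fact} is exactly what needs proving. The clean argument bypasses $\CP^3$ entirely and works in the ambient $\R^5$ of the proof of Proposition~\ref{ball}. There $SO(2)$ rotates the first two coordinates $(y_1,y_2)$ of \eqref{Pi}; since $y_1+iy_2=2\bar\zeta_5/R=-2f_+/R$, the polar angle $\phi=\arg(y_1+iy_2)$ satisfies $d\phi=dt$. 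The Euclidean metric on $\R^5$ (of which $s_4$ is, by $SO(5)$-invariance, a constant multiple restricted to the unit sphere) reads $d\rho^2+\rho^2d\phi^2+dy_3^2+dy_4^2+dy_5^2$ in cylindrical coordinates, so the rotation field has metric dual $\rho^2\,d\phi$ with no transverse component. That is what makes $X^\flat\parallel dt$. Your step~(2) via the distribution $D=\langle\al_2,\al_3\rangle^{\mathrm o}$ is both unnecessary and confused: you treat $\al_2,\al_3$ as a horizontal coframe when they are vertical for $\pi$, and you invoke $\mu_2^2+\mu_3^2=2a_-$ when the quantity relevant to $s$ and $t$ is $a_+$.

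Second, your normalizations are each off by~$2$, though the errors cancel. The flow of $X$ sends $(z_0,z_1,z_2,z_3)\mapsto(e^{i\theta}z_0,e^{-i\theta}z_1,e^{i\theta}z_2,e^{-i\theta}z_3)$, hence $f_+\mapsto e^{2i\theta}f_+$ and $X_*\ip dt=2$, not~$1$. And from \eqref{hh3} one has $N_3=h_3(X,X)=R^2\,s_4(X_*,X_*)$; combining with $N_3=4a_+$ (Proposition~\ref{th} at $c=3$) and \eqref{aRs} gives $\|X_*\|_{s_4}^2=4a/R^2=2(1-s^2)$, not $1-s^2$. Your proposed test-point check would have flagged each discrepancy. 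With the $\R^5$ argument made explicit and the factors of~$2$ tracked, the proof fits in one short paragraph.
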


\noindent One can regard $t\colon S^4\sm\Sf\to[0,2\pi)$ as a `longitude'
and $s\colon S^4\to[0,1]$ is sine of `latitude'.\smallbreak

Our aim is to replace the four functions $a,t,u,v$ in \eqref{simple4}
by three functions whose constancy defines a coassociative submanifold
$\sW$ of $\sC,$ so that $\sW$ has dimension $4$ and $\varphi$ pulls
back to zero on $\sW$. This is possible because each 3-dimensional
tangent space $T_o\sO$ is contained in a unique coassociative
subspace, which is generated by a basis $\{W_1,W_2,W_3\}$ of $T_o\sO$
and the 3-fold cross product $W_4$ defined by
\[ (\vasphi)(W_1,W_2,W_3,W)=\hh_2(W_4,W).\]
We shall in fact show that the desired functions are $a,t$ and
\be{b} b = u^2-v^2 = R(u-v).\ee
The value of $b$ is easily estimated at a point of $\sC$ for which
$f_-=0,$ for then
\[ R^2-\frac{b^2}{R^2} = 4uv = 4|f_+|^2+4|f_-|^2 = 4|f_+|^2=R^2(1-s^2),\]
and so $b=\pm sR^2$. Note that $b$ vanishes over $\Sm=\{s=0\}$.

Over any point $p$ of $S^4\sm\Sm,$ and for each value of $R>0,$ there
is a 2-sphere in $\sC$ lying over $p$ that has exactly two antipodal
`poles' belonging to the quadric $f_-=0$. Let
$[\bfz]=[\z_0,\z_1,\z_2,\z_3]$ be such a pole, chosen so that
$b>0$. Any point in the same $\R^3$ fibre with the same value of
$\|\bfz\|^2=R$ must (using \eqref{pi}) equal
$[\tz_0,\tz_1,\tz_2,\tz_3],$ where
\[\ba{rcl}
 \tz_0+j\tz_1 &=& (1+|\la|^2)^{-1/2}(\z_0+j\z_1)(1+j\la),\y
 \tz_2+j\tz_3 &=& (1+|\la|^2)^{-1/2}(\z_2+j\z_3)(1+j\la),\ea\] 
for some $\la\in\C\cup\{\infty\}$. Thus
\[  (1+|\la|^2)^{1/2}[\tz_0,\tz_1,\tz_2,\tz_3] =
[\z_0-\oz_1\la,\ \z_1+\oz_0\la,\ \z_2-\oz_3\la,\ \z_3+\oz_2\la].\]
The value of $b=R(u-v)$ is transformed into $\tilde b$ where
\[\ba{rcl} (1+|\la|^2)\tilde b 
&=& R\big(|\z_0\-\oz_1\la|^2-|\z_1\+\oz_0\la|^2
          +|\z_2\-\oz_3\la|^2-|\z_3\+\oz_2\la|^2\big)\y
&=& (1-|\la|^2)b - 
    2R(\la\oz_0\oz_1+\ol\la\z_0\z_1+\la\oz_2\oz_3+\ol\la\z_2\z_3)\y
&=& (1-|\la|^2)b,
\ea\]
since $[\bfz]$ satisfies $f_-=0,$ {\gr see Proposition \ref{pD}.} Now 
\[ h = \frac{1-|\la|^2}{1+|\la|^2}\in[-1,1]\] 
represents normalized height on the 2-sphere under stereographic
projection. Therefore
\be{tildeb} \tilde b = s\,h\,R^2,\ee
and the locus of points on a twistor fibre with $s>0$ and $\tilde b$
constant is a pole or a parallel circle, the equator if and only if
$b=0$. It follows from \eqref{match} that $R^2$ ($r^{1/2}$ in the
notation of \cite[\S4]{BS}) coincides with the norm of self-dual
2-forms, and is therefore the `natural' radial parameter of the
twistor fibres. Equation \eqref{tildeb} therefore tells us that, for
each fixed $s>0,$ the quantity $\tilde b$ is a Euclidean coordinate in
the fibre.

The equation $s=1$ distinguishes the 5-dimensional subset
$\sC_+=Q^{-1}(\sF_+)$ of $\sC$ lying over the totally geodesic
2-sphere $\Sf$. The twistor lift of $\Sf$ (in the sense of \cite{ES})
distinguishes a pole (where $f_-=0$) in each twistor fibre in
$\sC_+\cong\R^+\times Z_+$. It follows that the restriction of the
$\2$ form $\varphi$ to $\sC_+$ is a constant multiple of $d\tilde b
\we\tu_1,$ where $\tu_1$ is the self-dual 2-form determined by the
relevant point of $\Sf$. Setting $\tilde b$ constant therefore defines
a coassociative submanifold of $\sC_+$ which intersects each $\R^3$
fibre in a plane of constant height. The origin of this plane
corresponds to the pole of the twistor 2-sphere touching the
plane. The union (over $\Sf$) of these poles forms the unique
2-dimensional $\3$ orbit in the coassociative, which we can identify
with $TS^2$. If $\tilde b=0,$ the coassociative is a union of
equators, an example that was long recognized \cite{KM,KS}, though in
the present conical context this union is the orbifold $\C^2/\Z_2$
minus its singular point.

At this juncture, we can dispense with the tildes, and use $b$ to
denote the value of $u^2-v^2$ at an arbitrary point of $\sC$. We have
shown that $\sC$ contains a cosassociative submanifold with $s=1$ and
any constant value of $b\in\R$. Karigiannis and Lotay were the first
to identify the foliation of $\sC$ by coassociatives arising from the
action of $\3$ under consideration, and they extend the discussion to
the complete $\2$ metric on $\LaS$ \cite{KL2}. Our interpretation of
the conical situation using bivector and twistor space formalism is
summarized by

\begin{theorem}\label{cas}
Let $a\ge0,$ $b\in\R$ and $t\in[0,2\pi)$. Setting $a,\,b$ constant and
(if $s<1$) $t$ constant defines a coassociative submanifold of $\sC$
diffeomorphic to $TS^2$ unless $a=b=0$.
\end{theorem}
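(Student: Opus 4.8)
The plan is to assemble the coassociative foliation from the pieces already in hand, namely the analysis of the functions $a,b,t$ and the $\3$-orbit structure, rather than to verify the coassociative condition by a direct computation with $\varphi$ and $\vasphi$ on a parametrised submanifold. First I would set $\sW_{a,b,t}=\{p\in\sC:a(p)=a,\ b(p)=b,\ t(p)=t\}$ when $s<1$ (equivalently $a>0$ in view of \eqref{aRs}), and $\sW_{a,b}=\{a(p)=a,\ b(p)=b\}$ when $s=1$, and show that each such set is a smooth $4$-manifold on which $\varphi$ restricts to zero. The computation in \eqref{simple4} already shows that $\varphi\we da\we dt\we du\we dv=0$ off the locus $s=1$; since $a,t,u,v$ cut out a $3$-dimensional locus there — the $\3$-orbit $\sO$ together with the fact, recorded in the discussion around \eqref{simple4}, that $a/R^2$ and $t$ descend through $\pi$ — and since $b=R(u-v)$ is a function of $u,v,R$ and the radii, one checks that holding $a,b,t$ fixed is the same as holding $a,t$ and one further combination of $u,v$ fixed, and that $\varphi$ annihilates the resulting $4$-plane field. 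For the $s=1$ stratum I would quote the explicit statement obtained just before the theorem: on $\sC_+\cong\R^+\times Z_+$ the form $\varphi$ is a constant multiple of $d\tilde b\we\tu_1$ with $\tu_1$ a self-dual $2$-form pulled back from $\Sf$, so that $\{b=\mathrm{const}\}$ is manifestly coassociative there, and by \eqref{tildeb} the function $b$ is genuinely a fibre coordinate so these level sets are smooth.

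Next I would pin down the diffeomorphism type. Over $s<1$, fix $a>0$ and $t$; then $\varpi$ and the twistor picture identify the locus $\{a,t\ \mathrm{const}\}$ in $\sM$ with (an $\R^+$-cone on) a $2$-sphere's worth of chords, and imposing $b=R(u-v)=\mathrm{const}$ with $u+v=R$ picks out, fibrewise over each point of that $S^2$, a single value of $(u,v)$ together with a circle's worth of $\1$-phase — in other words the submanifold fibres over $S^2$ with circle fibre (the $\1$ fibre of $Q$), plus the radial $\R^+$; equivalently it is the total space of a circle bundle over $S^2$ times $\R^+$. To see that the total space is $TS^2$ and not, say, $S^3\times\R^+$ or $L(k,1)\times\R^+$, I would compute the Euler number of that circle bundle using the curvature formula of Theorem \ref{F} (or its $c=2$ specialisation and Corollary \ref{van}): the relevant $S^2$ inside $\sM$ is a graph over a diagonal-type sphere on which, by the consistency check following Corollary \ref{van}, the restricted curvature integrates to $-2\pi$ in the $c=1$ normalisation, so the circle bundle is the unit-sphere bundle of $\sO(-2)$, whose total space is $TS^2$; tensoring with the trivial $\R^+$ gives $TS^2$ itself after the cone point is accounted for. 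Over $s=1$ the identification is already done in the text: the coassociative meets each $\R^3$ fibre of $\sC_+\to\Sf$ in an affine plane of constant height, the union of the marked poles is the zero section $S^2$, and the whole thing is $TS^2$; when $\tilde b=0$ this degenerates to the union of equators, which is $\C^2/\Z_2$ minus the origin, and that is exactly the excluded case $a=b=0$ (there $s=1$ forces $\cuv=uv$, i.e.\ $\bfu=\bfv$ up to scale, so the $\3$-orbits are spheres and the "zero section" collapses).

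Finally I would glue the two strata: for fixed $(a,b)$ with $a>0$ the sets $\sW_{a,b,t}$ for varying $t$ fit together with the $s=1$ slice $\sW_{a,b}$ exactly along $\{s=1\}=\{a=0\}$ — but $a>0$ here, so in fact for $a>0$ the parameter $t$ is globally defined and $s<1$ everywhere on $\sW_{a,b,t}$, so no gluing is needed and the manifold is a single smooth $TS^2$; the $s=1$ faces appear only in the limiting family $a\to 0$. Thus the statement splits cleanly: $a>0$ gives the $3$-parameter family indexed by $(a,b,t)$, each member $TS^2$; $a=0,b\ne0$ gives the $1$-parameter family along $\Sf$, each member $TS^2$; and $a=b=0$ is the singular union of equators, legitimately excluded. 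The main obstacle I anticipate is the bookkeeping in the middle step — showing that the $4$-plane field cut out by $da,db,dt$ is everywhere contained in the coassociative plane field and not merely generically so, i.e.\ controlling what happens as one approaches $\sF_+$, $\sF_-$, or the singular $\R^3\cup\R^3$ — and correspondingly verifying smoothness of the level sets there; the cleanest route is probably to work on the twistor side, where $a/R^2$, $t$ and (by \eqref{tildeb}) the fibre coordinate $\tilde b$ are manifestly smooth coordinates on $\sC_+$ and extend smoothly across the relevant loci, so that the coassociative condition, once checked on the open dense set via \eqref{simple4} and the $d\tilde b\we\tu_1$ formula, propagates by continuity of $\varphi$.
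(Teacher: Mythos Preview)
Your proposal has two substantive gaps, one in each of the main steps.

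\textbf{The coassociative condition.} You invoke \eqref{simple4}, namely $\varphi\we da\we dt\we du\we dv=0$. This is a $7$-form identity on $\sC$, equivalent to the vanishing of $\varphi$ on the $3$-dimensional tangent space $T\sO$ of each $\3$-orbit. What is actually needed is the $6$-form identity $\varphi\we da\we db\we dt=0$ (this is \eqref{simple3} in the paper), equivalent to the vanishing of $\varphi$ on the $4$-plane $\ker da\cap\ker db\cap\ker dt$. The latter does not follow from the former: knowing that $\varphi$ vanishes on a $3$-plane $V$ tells you only that $V$ lies in \emph{some} coassociative $4$-plane, not which one. Your observation that $b=u^2-v^2$ depends only on $u,v$ merely gives $db\in\langle du,dv\rangle$, hence $da\we db\we dt\we du\we dv=0$ trivially --- but that is a $5$-form identity and goes in the wrong direction. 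The paper does not deduce \eqref{simple3} from \eqref{simple4} either; it states \eqref{simple3} and the transversality $da\we db\we dt\ne0$ as computer-verified identities, which together give both smoothness of the level set and the coassociative condition in one stroke.

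\textbf{The identification with $TS^2$.} Your description of the coassociative as a circle bundle over $S^2$ via ``a circle's worth of $\1$-phase --- the $\1$ fibre of $Q$'' cannot work: under the left $\1$ action generated by $X$ one has $f_+\mapsto e^{2i\theta}f_+$, so $t=\arg f_+$ is \emph{not} $SO(2)$-invariant and the level set $\{a,b,t\ \mathrm{const}\}$ is transverse to the $Q$-fibres, not a union of them. Consequently the Chern-class computation you propose (borrowed from the paragraph after Corollary \ref{van}) is not applicable. The paper's argument is quite different and uses the twistor fibration $\pi$: fixing $t$ selects a totally geodesic $S^3_t\subset S^4$; the relations \eqref{heights} then show that the coassociative projects under $\pi$ onto a semi-open annulus $\{s_{\min}\le s<1\}\subset S^3_t$ (with $s_{\min}$ given by \eqref{smin}), meeting each $\R^3$ fibre in a small circle of the twistor $2$-sphere that shrinks to a point as $s\to s_{\min}$. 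The limiting $2$-sphere $\{s=s_{\min}\}$ is the zero section, and the surfaces of Figure~2 are the $\R^2$ fibres of $TS^2$. The degenerate cases $b=0$ (annulus reaches $s=0$) and $a=0$ (the $s=1$ slice discussed before the theorem) are handled separately in the same geometric language.
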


\noindent{\gr Note that if $s=1$ then $a=0$ and $t$ is undefined (see
  \eqref{at}), but $b$ can still be varied.}

\begin{proof}
We have motivated the significance of the $\3$ invariant functions
$a,b,t$ (see \eqref{at} and \eqref{b}), or equivalently $uv-\cuv,$
$u^2-v^2,$ $t$. The fact that these define coassociative manifolds
throughout $\sC$ follows from the differential relation
\be{imp} da\we db\we dt\ne0\qbox{provided} ab\ne0,\ee
and the identity
\be{simple3} \varphi\we da\we db\we dt = 0,\ee
which strenghtens \eqref{simple4}. We have verified both by
computer. Topologically, the situation is reminiscent
of \eqref{affine}, and we describe this next.

Fixing $t$ amounts to restricting attention to a totally geodesic
3-sphere
\[ S^3_t = (\R_t\op\R^3)\cap S^4,\]
where $\R_t\subset\R^2$ is the line corresponding to $\arg f_+=t,$ and
$SO(2)$ acts trivially on $\R^3$. The choice of $t$ will also
distinguish a point $p_t\in\Sm$. The configuration of the
coassociative manifolds above $S^3_t$ can be understood by reference
to the functions
\be{heights}\left\{\ba{ccl}
R^2 &=&\ds \frac{2a}{1-s^2}\\[15pt]
R^2\sqrt{1-h^2} &=&\ds
\frac{\sqrt{4a^2s^2-b^2(1-s^2)^2}}{s(1-s^2)},
\ea\right.\ee
formed by rearranging \eqref{aRs} and \eqref{tildeb}, with $a,b$
presumed constant. Whilst $R^2$ represents the natural radius of the
twistor 2-sphere containing the point in question, the second function
is the radius of the small circle generated by its $\3$ orbit.

The second equation in \eqref{heights} implies that $|h|=1$ when
$s=s_\mathrm{min},$ where
\be{smin} 1-s_\mathrm{min}^2 = 2c(\sqrt{c^2+1}-c)\ee
and $c=a/b$. The right-hand side lies in the interval $(0,1)$ provided
$a>0$ and $b\ne0$. In this case, the coassociative manifold projects
onto the semi-open annulus in $S^3_t$ defined by $s_\mathrm{min}\le
s<1$. It consists of points in $\sC$ whose norm squared $\|\bfz\|^2$
varies in inverse proportion to $\sqrt{1-s^2},$ and intersects each
$\R^3$ fibre over the annulus in a small circle that shrinks to a
point $p_\mathrm{min}$ over each point of the `limiting' 2-sphere
$\{s=s_\mathrm{min}\}\subset S^3_t$. Having fixed $a$ and $b,$ the
surface in Figure 2 depicts the union of small circles lying over a
geodesic segment in $S^4$ from a point of $\Sf$ to
$p_\mathrm{min}$. The associated value $R^2$ of the equatorial radii
are shown (in red) for reference. In this way, the surface represents
a fibre in the tangent bundle of this 2-sphere over its vertex
$p_\mathrm{min}$.

\begin{center}
\vspace{-10pt}
\scalebox{.35}{\includegraphics{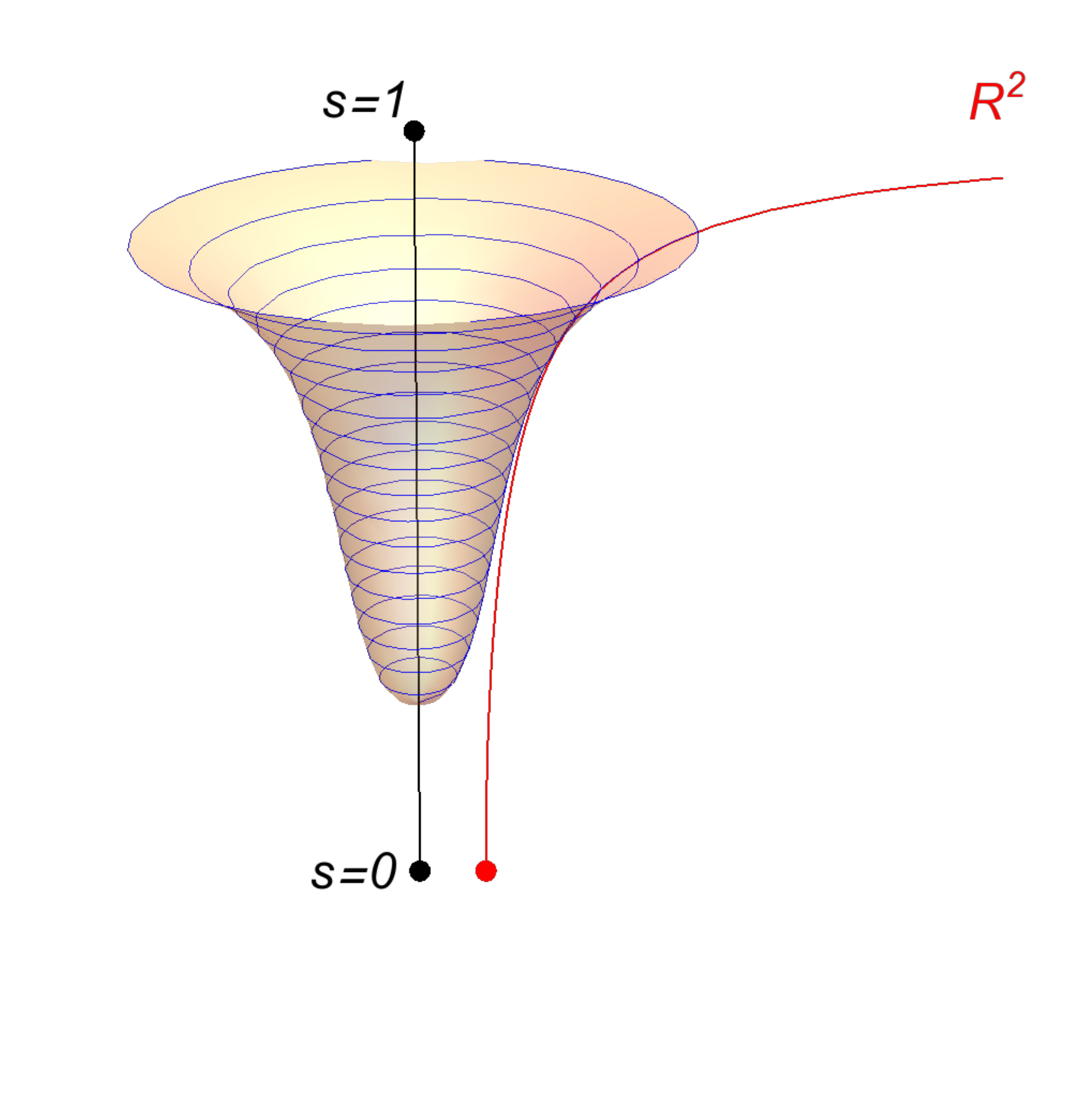}}
\vspace{-65pt}
\end{center}
\[\hbox{Figure 2: The fibre of a coassociative submanifold of $\sC$ with
$a=\half$ and $b=\qart$}\]\vs\vs

If $a>0$ but $b=0$ then \eqref{heights} implies that $h\equiv0,$ and
the coassociative is the closure of a union of equators in twistor
fibres assuming all values of $s\in(0,1)$. If Figure 2 were redrawn to
illustrate this case, the surface would retain a positive radius at
$s=0,$ though smoothness is maintained for reasons we now explain. For
a fixed value of $s$ close to $0$ the equators (of radius $R^2$ close
to $2a$) lie over a tiny 2-sphere close to $p_t$. As $s$ attains the
value $0,$ the limits of these equators exhaust the twistor fibre over
$p_t$ with $R^2=2a,$ which now plays the role of the limiting
2-sphere.

The case $a=0$ was discussed before the statement of the theorem.
\end{proof}

\setcounter{equation}0
\section{SU(3) structure}\label{symp}\label{SU3}

We now turn attention to the symplectic form
\[ \si = X\ip\varphi \]
obtained by contracting the exact $3$-form $\varphi$ on
$\sC=\R^+\ti\CP^3$ with the Killing vector field $X$ tangent to the
$SO(2)$ fibres. To proceed, one can use either of the descriptions
\[\ts \varphi = d(\frac13R^3\om) = -d(R\tu_0)\]
from Section \ref{holo}, provided we work over $\R^8$. We already know
that $\sL_X\om=0$ since $SO(2)$ is a symmetry of the nearly-K\"ahler
structure of $\CP^3$. But it is also true that
\[ \sL_X\tu_0 = \sL_X(dR\we\al_1 - \al_{23}) = 0.\]
This follows because $\sL_XY={\gr[X,Y]}=0$ and $\al_1=Y\ip e$ so $\sL_X\al_1=0,$ and
also $3\al_{23}=R^2(\om-\gr d\hal_1)$. Therefore
\[ \si = -X\ip d(R\tu_0) = d(R\,X\ip\tu_0).\]
We shall work from this formula, together with
 
\begin{lemma}\label{mual}
\[ X\ip\tu_0 = -u\,du+v\,dv+\half\big[\-u\,dv+v\,du + 
\bfu\cd d\bfv-\bfv\cd d\bfu\big].\] 
\end{lemma}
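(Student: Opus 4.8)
The plan is to compute $X\ip\tu_0$ directly from the definition $\tu_0 = dR\we\al_1 - \al_2\we\al_3$, using $X\ip dR = 0$ (since the Euclidean sphere is $X$-invariant, as $\eu(X,X)=R$ but $X$ is tangent to the $SO(2)$ orbits which lie in spheres — more precisely $\sL_X R=0$ and $R$ is the radial coordinate, so $X\ip dR=dR(X)=0$). This gives
\[ X\ip\tu_0 = (X\ip dR)\,\al_1 - dR\,(X\ip\al_1) - (X\ip\al_2)\,\al_3 + (X\ip\al_3)\,\al_2 = -\mu_1\,dR - \mu_2\,\al_3 + \mu_3\,\al_2, \]
using $X\ip\al_j = \mu_j$ from \eqref{mus}. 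So the whole task reduces to re-expressing the three terms $-\mu_1\,dR$, $\mu_3\al_2$, $-\mu_2\al_3$ in terms of the quotient coordinates $\bfu,\bfv$.

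For the first term, $\mu_1 = u - v$ and $R = u + v$, so $\mu_1\,dR = (u-v)d(u+v) = u\,du + u\,dv - v\,du - v\,dv$, giving $-\mu_1\,dR = -u\,du + v\,dv + v\,du - u\,dv$. The remaining combination $\mu_3\al_2 - \mu_2\al_3$ is, up to sign, exactly the 1-form $\mu_2\al_3 - \mu_3\al_2$ that appears in Lemma \ref{Ga}, where it is identified with $-\half\Ga_-$, i.e. with $-\half(u\,dv - v\,du + \bfu\cd d\bfv - \bfv\cd d\bfu)$. Thus $\mu_3\al_2 - \mu_2\al_3 = \half(u\,dv - v\,du + \bfu\cd d\bfv - \bfv\cd d\bfu)$. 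Adding the two pieces:
\[ X\ip\tu_0 = \big(-u\,du + v\,dv + v\,du - u\,dv\big) + \half\big(u\,dv - v\,du + \bfu\cd d\bfv - \bfv\cd d\bfu\big). \]
Collecting the $u\,du$ and $v\,dv$ terms (which only occur in the first bracket) and combining the $u\,dv$, $v\,du$ terms ($-u\,dv + \half u\,dv = -\half u\,dv$ and $v\,du - \half v\,du = \half v\,du$) yields
\[ X\ip\tu_0 = -u\,du + v\,dv + \half\big(-u\,dv + v\,du + \bfu\cd d\bfv - \bfv\cd d\bfu\big), \]
which is the claimed formula.

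The only genuine content is the identification $\mu_2\al_3 - \mu_3\al_2 = -\half\Ga_-$, and this is precisely the final equality recorded in Lemma \ref{Ga}, whose proof is cited there as ``a direct computation'' in Euclidean coordinates on $\R^8$ via \eqref{mus}, \eqref{uuu}, \eqref{vvv} — so I would either invoke Lemma \ref{Ga} outright or, if a self-contained argument is wanted, verify $\mu_2\al_3 - \mu_3\al_2 = \half(u\,dv - v\,du + \bfu\cd d\bfv - \bfv\cd d\bfu)$ by expanding both sides in the $\x_i$. The main (and only) obstacle is that bookkeeping: one must expand $\mu_2,\mu_3$ and $\al_2,\al_3$, wedge, and then recognize the result as a combination of $du,dv,\bfu\cd d\bfv,\bfv\cd d\bfu$ after substituting the quadratic expressions for $u_i,v_j$ — but since $\al_2\al_3$-type products appear already in the proof of Theorem \ref{g2} and Lemma \ref{gc}, this is routine given the machinery in place. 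Everything else is elementary algebra with $u+v=R$ and $u-v=\mu_1$.
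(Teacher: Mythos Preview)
Your proof is correct and follows essentially the same route as the paper: compute $X\ip\tu_0 = -\mu_1\,dR - \mu_2\al_3 + \mu_3\al_2$ from the definition of $\tu_0$ (using $X\ip dR=0$ and $X\ip\al_i=\mu_i$), then invoke Lemma~\ref{Ga} to rewrite $-\mu_2\al_3+\mu_3\al_2=\half\Ga_-$, and finally substitute $\mu_1=u-v$, $dR=du+dv$. The paper's proof is just the terse version of what you wrote out in full.
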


\begin{proof}
Recall that the function $\mu_i$ on $\sC$ was defined (in Section
\ref{T2}) to be the interior product $X\ip\al_i$. The definition of
$\tu_0$ therefore gives
\[ X\ip\tu_0 = -\mu_1dR-\mu_2\al_3+\mu_3\al_2 = -\mu_1dR+\half\Ga_-.\]
The result follows from Lemma \ref{Ga} after substituting $\mu_1=u-v$
and $dR=du+dv$.
\end{proof}

Bearing in mind that
$u\,du=\suml_{i=1}^3\!\u_id\u_i$ and $v\,dv=\suml_{i=1}^3\!\v_id\v_i,$
we can crudely approximate $X\ip\tu_0$ by the sum
\[\ts\half\suml_{i=1}^3(\u_i-\v_i)(d\u_i+d\v_i).\]
This leads us to define
\be{pq} \left\{\ba{rcl}
\bfp &=& \bfu+\bfv,\y \bfq &=& (u+v)(\bfu-\bfv)=R(\bfu-\bfv),
\ea\right.\ee
and write $\bfp=(p_1,p_2,p_3)$ and $\bfq=(q_1,q_2,q_3)$. (The context
should make it clear that these $q_i$ are not quaternions!) Then
\[\ba{rcl} \half\suml_{i=1}^3q_idp_i - R\,X\ip\tu_0 
&=& \frac32R(u\,du-v\,dv)+\half R(u\,dv-v\,du)\yy
&=& d(\half R^2\mu_1).\ea\] 
The fact that this 1-form is exact is somewhat of a miracle, since it
shows that the vectors $\bfp,\bfq$ furnish Darboux coordinates for
$\si$:

\begin{theorem}\label{si}
With the above notation,
\[ \si = -\half\sum_{i=1}^3 dp_i\we dq_i,\]
and this is non-degenerate away from the origin in $\R^6$.
\end{theorem}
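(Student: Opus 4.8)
The plan is to establish the formula $\si = -\frac12\sum_{i=1}^3 dp_i\we dq_i$ by the exactness argument already set up in the excerpt, and then to check non-degeneracy directly. Since $\si = d(R\,X\ip\tu_0)$ and Lemma \ref{mual} gives
\[ X\ip\tu_0 = -u\,du+v\,dv+\tfrac12\big(\!-u\,dv+v\,du+\bfu\cd d\bfv-\bfv\cd d\bfu\big),\]
the first step is purely computational: expand $\frac12\sum q_i\,dp_i$ using $\bfp=\bfu+\bfv$ and $\bfq=R(\bfu-\bfv)$, subtract $R\,(X\ip\tu_0)$, and verify that the result equals $d(\frac12 R^2\mu_1)$, exactly the identity displayed just before the theorem. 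Taking $d$ of both sides then kills the exact term and yields $\si = \frac12\sum dq_i\we dp_i = -\frac12\sum dp_i\we dq_i$, since $\si = d(R\,X\ip\tu_0) = d(\frac12\sum q_i\,dp_i) - d\,d(\frac12 R^2\mu_1) = \frac12\sum dq_i\we dp_i$. I would present this as the short verification it is, perhaps noting that $u\,du = \sum \u_i\,d\u_i$ etc.\ makes the bookkeeping transparent.

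The second step is non-degeneracy of $\si$ on $\R^6\smz$. Here the issue is that $(\bfp,\bfq)$ is \emph{not} a coordinate system in the naive sense: $\bfp = \bfu+\bfv$ is linear, but $\bfq = R(\bfu-\bfv) = (u+v)(\bfu-\bfv)$ involves the non-smooth radii $u=|\bfu|$, $v=|\bfv|$. So $\si = -\frac12\sum dp_i\we dq_i$ is literally non-degenerate wherever $(p_1,p_2,p_3,q_1,q_2,q_3)$ restricts to a local coordinate chart, i.e.\ wherever the map $\Phi\colon\buv\mapsto(\bfp,\bfq)$ is a local diffeomorphism; I must show this holds on all of $\R^6\smz$, including across $\R^3\cup\R^3$ where $u$ or $v$ fails to be smooth. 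The cleanest route is to observe that although $u,v$ are not smooth, the \emph{product} $R(\bfu-\bfv) = (|\bfu|+|\bfv|)(\bfu-\bfv)$ may still define a $C^1$ (indeed real-analytic away from the origin) map, or — better — to avoid differentiating $u,v$ altogether by computing the top power $\si^3$ directly as a $6$-form on $\R^6\smz$ in the smooth coordinates $(\u_1,\u_2,\u_3,\v_1,\v_2,\v_3)$ and showing it is nowhere zero there.

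Concretely, the main technical step I expect to be the real obstacle is evaluating $\si^3$ in the $\buv$ coordinates. Writing $dq_i = d\!\left((u+v)(\u_i-\v_i)\right) = (\u_i-\v_i)(du+dv) + (u+v)(d\u_i-d\v_i)$ and $dp_i = d\u_i+d\v_i$, one gets $\si = -\frac12\sum_i (d\u_i+d\v_i)\we\big[(\u_i-\v_i)(du+dv)+(u+v)(d\u_i-d\v_i)\big]$. Since $\sum_i(d\u_i+d\v_i)\we(\u_i-\v_i) = d(u^2-v^2)/? $ — more precisely $\sum_i \u_i\,d\u_i = u\,du$, $\sum_i\v_i\,d\v_i = v\,dv$, so $\sum_i(\u_i-\v_i)(d\u_i+d\v_i) = u\,du+v\,dv - (\text{cross terms})$ — the $du+dv$ piece contributes a term of the form $dR\we dS$ for a function $S$, and cubing, the dominant contribution to $\si^3$ comes from the $(u+v)^3\big(\bigwedge_i(d\u_i+d\v_i)(d\u_i-d\v_i)\big)$-type term, which equals a nonzero constant multiple of $(u+v)^3\,d\u_1\we d\u_2\we d\u_3\we d\v_1\we d\v_2\we d\v_3$. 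One then argues the remaining terms either vanish (repeated factors) or are subsumed, so that $\si^3 = c\,R^3\,du_{123}\we dv_{123}$ with $c\ne0$; since $R>0$ off the origin, $\si^3\ne0$ everywhere on $\R^6\smz$, giving non-degeneracy. The delicate point to get right is the combinatorics of which products of the three bracketed terms survive — I would organize this by separating the $(du+dv)$-part from the $(u+v)(d\u_i-d\v_i)$-part and noting that the former, being proportional to a single $1$-form $dR$ (since $u\,du+v\,dv$ and the cross terms combine), can appear at most once in $\si^3$, which immediately collapses most of the expansion. This sidesteps any need to differentiate $u$ or $v$ individually and so handles the singular locus $\R^3\cup\R^3$ uniformly.
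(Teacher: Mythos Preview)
Your first step is exactly what the paper does: the displayed identity
\[
\tfrac12\textstyle\sum_i q_i\,dp_i - R\,X\ip\tu_0 = d\big(\tfrac12 R^2\mu_1\big)
\]
is the entire argument in the paper, and differentiating it gives the Darboux formula. So that part is fine and matches the paper verbatim.

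For non-degeneracy, the paper gives no separate argument; it simply calls $(\bfp,\bfq)$ ``Darboux coordinates''. Your plan to compute $\si^3$ is a reasonable way to fill this in, but your conclusion is wrong: the cross term does \emph{not} get absorbed, and $\si^3$ is \emph{not} a constant multiple of $R^3\,du_{123}\wedge dv_{123}$. Writing $\si=\tfrac12\,dR\wedge S+R\,\Omega$ with $S=\sum_i(u_i-v_i)(du_i+dv_i)$ and $\Omega=\sum_i du_i\wedge dv_i$, one has $(dR\wedge S)^2=0$, so
\[
\si^3 \;=\; R^3\,\Omega^3 \;+\; \tfrac34\,R^2\,dR\wedge S\wedge\Omega^2,
\]
and the second term is genuinely nonzero. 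A direct evaluation (or, more efficiently, computing the Jacobian of $\buv\mapsto(\bfp,\bfq)$ via the matrix determinant lemma) gives
\[
\si^3 \;=\; -6\,R^3\,\bigl(1+\sin^2\theta\bigr)\,du_{123}\wedge dv_{123}
\;=\; -\frac{3R^3\,(3uv-\cuv)}{uv}\,du_{123}\wedge dv_{123},
\]
where $2\theta$ is the angle between $\bfu$ and $\bfv$. You can check this against sample points: at $\bfu=\bfv=(1,0,0)$ one gets $-48$, at $\bfu=(1,0,0),\bfv=(0,1,0)$ one gets $-72$, at $\bfu=-\bfv=(1,0,0)$ one gets $-96$; these are not all the same multiple of $R^3=8$. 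So your combinatorial claim that the remaining terms ``either vanish or are subsumed'' fails.

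The good news is that the correct coefficient $1+\sin^2\theta$ (equivalently $N_2/(4uv)$) is strictly positive on $\sM'=\{uv\ne0\}$, so your strategy still yields non-degeneracy there once the computation is done properly. Your worry about $\R^3\cup\R^3$ is also well-founded: since $(3uv-\cuv)/(uv)=3-\bfs\cdot\bft$ depends on the direction of approach as $\bfv\to0$, the form $\si$ does not extend continuously across the singular locus in the $(\bfu,\bfv)$ coordinates, and the paper's phrase ``away from the origin'' should be read as applying to the smooth locus $\sM'$.
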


The realization of the canonical coordinates in the theorem initially
came about by observing that setting $\bfp$ to be a constant vector
defines a Lagrangian submanifold of $\sM$. This assertion is
equivalent to the equation
\[ \si\we dp_1\we dp_2\we dp_3 = 0.\]
The idea of weighting sums and differences of the $\buv$ coordinates
with powers of the function $R=u+v$ arises from Proposition
\ref{ball}. The generic fibres of $\varpi\colon\sM\to D^3$ are
\emph{not} Lagrangian, but we do have:

\begin{corollary}\label{lag}
The following maps $\sM\to\R^3$ have $\si$-Lagrangian fibres:
\[\ba{ccc}
\buv\mapsto\bfp, && \buv\mapsto\bfq,\yy
\buv\mapsto \bfu\sqrt{u\+v}, &\quad& \buv\mapsto\bfv\sqrt{u\+v}.
\ea\]
\end{corollary}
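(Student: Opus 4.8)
\emph{Proof plan.} Everything will be read off from the Darboux expression $\si=-\half\sum_{i=1}^3 dp_i\we dq_i$ of Theorem \ref{si}. Since $\sM$ is six-dimensional and a three-dimensional submanifold is $\si$-Lagrangian exactly when $\si$ restricts to zero on it, for each of the four maps it is enough to exhibit the generic fibre as a smooth $3$-fold carrying the zero restriction of $\si$. For $\buv\mapsto\bfp$ the fibre is a common level set of $p_1,p_2,p_3$, so $dp_i$ vanishes on it and hence so does $\si$; the case $\buv\mapsto\bfq$ is identical with $p_i$ and $q_i$ interchanged (the first of these was already flagged in the text as the clue leading to Theorem \ref{si}). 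The map $\buv\mapsto\bfv\sqrt{u+v}$ I would obtain from $\buv\mapsto\bfu\sqrt{u+v}$ via the involution $\hj$ of Lemma \ref{j}: it interchanges $\bfu$ and $\bfv$, hence fixes $R=u+v$ and swaps the two maps, and it satisfies $\hj^*\si=-\si$, so it carries $\si$-Lagrangian fibres to $\si$-Lagrangian fibres. This leaves only $\buv\mapsto\bfu\sqrt{u+v}$, which I would treat on the smooth locus $uv\ne0$, where its fibres are genuine $3$-manifolds.

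Fix a nonzero vector value and let $L=\{\buv\in\sM:\bfu\sqrt{u+v}=\mathbf a\}$. Along $L$ the direction $\bfs=\bfu/u$ is constant, equal to $\mathbf a/|\mathbf a|$, while $u$ is determined by $u^2(u+v)=|\mathbf a|^2$; differentiating this relation along $L$ yields the ODE $u'(3u+2v)=-u$, where $u'=du/dv$. Using $v\in\R^+$ and $\bft=\bfv/v\in S^2$ as coordinates on $L$ (so $\bfv=v\bft$, and $L\cong\R^+\ti S^2$ is three-dimensional), one computes from $\bfp=u\bfs+v\bft$ and $\bfq=(u+v)(u\bfs-v\bft)$ that
\[ d\bfp=(u'\bfs+\bft)\,dv+v\,d\bft,\qquad
   d\bfq=\big([u'(2u+v)+u]\bfs-[u'v+u+2v]\bft\big)\,dv-(uv+v^2)\,d\bft.\]
Substituting into $\sum_i dp_i\we dq_i$, the $dv\we dv$ and $d\bft\we d\bft$ parts drop out and one is left with $-v\sum_i c_i\,dv\we dt_i$, where the vector $(c_1,c_2,c_3)$ equals $[u'(3u+2v)+u]\,\bfs-(u'+1)v\,\bft$. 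The $\bfs$-term is killed precisely by the ODE above --- this is the step that pins the exponent to $\half$ --- leaving the $\bft$-term, whose contribution is $(u'+1)v^2\,dv\we(\bft\cd d\bft)$; and this vanishes because $\bft\cd d\bft=\half\,d|\bft|^2=0$. Hence $\si|_L=0$, so $L$ is Lagrangian.

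The only genuine computation is the display above, and the lone pitfall is the bookkeeping with the vector-valued $1$-forms $d\bfp,d\bfq$ together with getting the ODE for $u'$ correct; once the surviving terms are isolated their vanishing is forced by the unit-sphere identity $\bft\cd d\bft=0$ and the defining equation $u^2(u+v)=\mathrm{const}$. It is also worth confirming that $L$ is connected and three-dimensional, so that ``fibre'' and ``Lagrangian'' are unambiguous, and observing that the identity $\si\we dp_1\we dp_2\we dp_3=0$ quoted in the text is exactly the $\buv\mapsto\bfp$ instance of the criterion used here.
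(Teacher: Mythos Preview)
Your proof is correct and follows essentially the same route as the paper: the first two maps are immediate from the Darboux form, the fourth is reduced to the third via the $\bfu\leftrightarrow\bfv$ involution $\hj$ (the paper just says ``by (skew) symmetry''), and the third is handled by parametrizing the fibre and checking that $\sum dp_i\we dq_i$ restricts to zero. The only difference is in the bookkeeping: the paper sets $\bfu=R^{-1/2}\bfm$ with $\bfm$ constant and computes directly that the restriction of $2\si$ collapses to $(\sum v_i\,dv_i)\we dR=v\,dv\we dR$, which vanishes because the constraint forces $dR$ to be a multiple of $dv$; you instead parametrize by $(v,\bft)$, derive the ODE $u'(3u+2v)=-u$, and use it to kill the $\bfs$-coefficient before invoking $\bft\cd d\bft=0$. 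Both are the same computation in different coordinates, with the paper's version marginally quicker.
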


\begin{proof}
The Lagrangian nature of the first two follows immediately from Theorem
\ref{si}. By (skew) symmetry it suffices to verify the third, so set
$\bfu=R^{-1/2}\bfm$ with (as always) $R=u+v$ and $\bfm$ a constant vector. The
restriction of $\gr 2\si$ to a fibre is
\be{2si}\ts\gr \suml_{i=1}^3\big(\frac12R^{-3/2}m_i\,dR-dv_i\big)\we
(\frac12R^{-1/2}m_i\,dR-R\,dv_i-v_i\,dR) = 
\Big(\suml_{i=1}^3v_idv_i\Big)\we dR.\ee
But 
\[ dR = du+dv = -\half R^{-3/2}|\bfm|dR+dv\]
and $\sum\!v_idv_i=vdv,$ so \eqref{2si} vanishes.
\end{proof}

The methods adopted at the start of this section are suited to a study
of the symplectic form
\[ \BSsi = -X\ip d\big((R^4+1)^{1/4}\tu_0\big) =
d\big((R^4+1)^{1/4}\,X\ip\tu_0\big)\] 
induced from the complete $\2$ structure of Theorem \ref{BS}. An
initial observation is that the three subspaces defined by the
respective equations $\bfu=\bf0,$ $\bfv=\bf0,$ $\bfu=\bfv$ are
Lagrangian relative to $\BSsi$ (as they are for $\si$). This fact
follows by substituting the equations into Lemma \ref{mual} and then
differentiating, and is equally apparent from

\begin{corollary} The 2-form $2(R^4\+1)^{3/4}\BSsi$ equals
\[ -(R^4+2)du\we dv + R^3dR\we(\bfu\cd d\bfv\-\bfv\cd d\bfu) +
2(R^4\+1)\suml_{i=1}^3 du_i\we dv_i.\]
\end{corollary}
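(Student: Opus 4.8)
The plan is to compute $\BSsi = d\big((R^4+1)^{1/4}\,X\ip\tu_0\big)$ directly from Lemma \ref{mual}, which gives a closed-form expression for $X\ip\tu_0$ in the coordinates $\buv$. First I would write $\phi = (R^4+1)^{1/4}$ for brevity, so that $\BSsi = d\phi\we(X\ip\tu_0) + \phi\,d(X\ip\tu_0)$. Since $d\phi = R^3(R^4+1)^{-3/4}dR$ and $dR = du+dv$, the first term is already naturally proportional to $(R^4+1)^{-3/4}$; the second term needs to be multiplied through by $(R^4+1)^{1/2}$ to match, which explains the stated normalization factor $2(R^4+1)^{3/4}$ in front of $\BSsi$.

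Next I would plug in the explicit formula from Lemma \ref{mual},
\[ X\ip\tu_0 = -u\,du+v\,dv+\tfrac12\big(-u\,dv+v\,du+\bfu\cd d\bfv-\bfv\cd d\bfu\big),\]
and differentiate. Using $u\,du=\bfu\cd d\bfu$, $v\,dv=\bfv\cd d\bfv$, and the fact that $d(\bfu\cd d\bfu)=0=d(\bfv\cd d\bfv)$, the exterior derivative of $X\ip\tu_0$ collapses to a combination of $du\we dv$ and the terms $d(\bfu\cd d\bfv)=\sum du_i\we dv_i$ and $d(\bfv\cd d\bfu)=\sum dv_i\we du_i=-\sum du_i\we dv_i$. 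Collecting coefficients should yield $d(X\ip\tu_0) = -du\we dv + \sum_{i=1}^3 du_i\we dv_i$ (up to the sign conventions already fixed in Theorem \ref{si}, where the analogous computation with $\phi\equiv1$ gives $\si=d(X\ip\tu_0)$ — indeed one checks $\si=-\tfrac12\sum dp_i\we dq_i$ is consistent with this). For the first term, $d\phi\we(X\ip\tu_0)$, I would expand $(du+dv)\we(X\ip\tu_0)$; the terms $-u\,du$ and $v\,dv$ contribute $-u\,du\we dv + v\,dv\we du = -(u+v)\,du\we dv = -R\,du\we dv$, the terms $-\tfrac12 u\,dv+\tfrac12 v\,du$ contribute $-\tfrac12 u\,du\we dv+\tfrac12 v\,du\we dv\wedge$ wait — $du\we u\,dv = u\,du\we dv$ and $dv\we v\,du = -v\,du\we dv$, so this gives $-\tfrac12(u+v)\,du\we dv = -\tfrac12 R\,du\we dv$, while $\tfrac12(\bfu\cd d\bfv - \bfv\cd d\bfu)$ wedged with $dR$ gives the stated $\tfrac12 dR\we(\bfu\cd d\bfv - \bfv\cd d\bfu)$ contribution.

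Assembling the pieces: $2(R^4+1)^{3/4}\BSsi = 2R^3(du+dv)\we(X\ip\tu_0) + 2(R^4+1)\,d(X\ip\tu_0)$. The $du\we dv$ coefficient from the first piece is $2R^3\cdot(-\tfrac32 R) = -3R^4$ if my bookkeeping of the $-\tfrac32 R$ total is right — I would recheck that the $-u\,du+v\,dv$ and $\tfrac12(-u\,dv+v\,du)$ pieces together give $-\tfrac32 R\,du\we dv$ after wedging with $dR$ — and from the second piece $2(R^4+1)\cdot(-1) = -2R^4-2$, for a total $du\we dv$ coefficient of $-3R^4 - 2R^4 - 2$; since the claimed answer has coefficient $-(R^4+2)$, the discrepancy signals that the $R^3$-weighted contribution must actually cancel down to $\tfrac12$, i.e.\ the $-u\,du+v\,dv$ and $\tfrac12(-u\,dv+v\,du)$ terms must combine (after wedging with $du+dv$) to give $-\tfrac12 R\,du\we dv$, not $-\tfrac32 R$. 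The main obstacle is precisely getting these signs and coefficients exactly right: the wedging with $dR=du+dv$ mixes several terms with opposite orientations, and one must be careful that $du\we du = 0$ kills half of them while the surviving cross-terms have the correct signs. Once the $du\we dv$ coefficient is verified to be $-(R^4+2)$, the $dR\we(\bfu\cd d\bfv-\bfv\cd d\bfu)$ term (coefficient $2R^3\cdot\tfrac12 = R^3$) and the $\sum du_i\we dv_i$ term (coefficient $2R^3\cdot 0 + 2(R^4+1)\cdot 1 = 2(R^4+1)$, since $dR\we(X\ip\tu_0)$ contains no $\sum du_i\we dv_i$ part) fall out immediately, completing the proof.
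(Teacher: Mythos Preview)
Your approach is correct and is exactly the paper's: both compute $\BSsi=d\big((R^4+1)^{1/4}X\ip\tu_0\big)$ from Lemma~\ref{mual} by splitting into $d\phi\we(X\ip\tu_0)+\phi\,d(X\ip\tu_0)$. The only problem is the sign slip you yourself flagged but did not resolve. When you wedge $dR=du+dv$ with $-u\,du+v\,dv$, the surviving cross-terms are $dv\we(-u\,du)=+u\,du\we dv$ and $du\we(v\,dv)=+v\,du\we dv$, giving $+R\,du\we dv$, not $-R\,du\we dv$. Combined with your (correct) contribution $-\half R\,du\we dv$ from the $\half(-u\,dv+v\,du)$ piece, the total is $+\half R\,du\we dv$. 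Hence the $du\we dv$ coefficient in $2(R^4+1)^{3/4}\BSsi$ is
\[ 2R^3\cdot\tfrac12 R \;+\; 2(R^4+1)\cdot(-1) \;=\; R^4-2R^4-2 \;=\; -(R^4+2),\]
as claimed; the other two coefficients you already had right.

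The paper sidesteps this bookkeeping by using the more compact form $X\ip\tu_0=-\mu_1\,dR+\half\Ga_-$ (from the proof of Lemma~\ref{mual}), so that $dR\we(X\ip\tu_0)=\half\,dR\we\Ga_-$ immediately, the $-\mu_1\,dR$ term dying against $dR$.
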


\begin{proof}
We again use Lemma \ref{mual} and the same calculations that led to
Theorem \ref{si}. This shows that
\[\ts 2(R^4\+1)^{3/4}\BSsi = R^3dR\we\Ga_- + 
2(R^4+1)\big(\!-d\mu_1\we dR+du\we dv+\suml_{i=1}^3du_i\we dv_i\big),\]
which simplifies to the expression stated. 
\end{proof}

For the remainder of this section, we shall consider exclusively the
metric $\gg_2$ described by Theorem \ref{g2}. We shall study the
almost complex structure $\JJ$ on $\sM$ defined by
\be{siNg} \si(W,Y) = N_2^{1/2}\gg_2(\JJ W,\,Y),\ee
in accordance with \eqref{s} and Remark \ref{scale}. Note that both
$\gg_2$ and (as must be the case) $\JJ$ are unaffected by re-scaling
the Killing vector field $X$ used in their definition. We shall also
identify the complex volume form induced from the $\2$ structure of
$\sC$.

Consider a tangent vector $\E$ in $T_m\sM,$ it is natural to define
dual 1-forms
\[ \E^\flat=\E\ip\gg_2,\qquad \E^\natural=\E\ip\si.\]
With the `endomorphism' sign convention for the action of $\JJ$ on
1-forms, \eqref{siNg} becomes
\be{nat} \E^\natural = N_2^{1/2}\,\JJ\E^\flat.\ee
This suggests the following strategy to try to pin down $\JJ$. We seek
a tangent vector $\E$ such that \emph{either} $E^\flat$ \emph{or}
$\E^\natural$ is as simple as possible, in the hope that the other one
is not over complicated.

We give one example of this approach. For this purpose, let $\xi$
denote the 1-form $\frac12\sum\!q_j\,dp_j$ (cf.\ \eqref{pq}) and let
$B_i=u\,d\v_i-v\,d\u_i$ as at the end of Section \ref{T2}. Then

\begin{proposition}\label{10}
For each fixed $i=1,2,3,$
\[\ts N^{1/2}\JJ(d(Rp_i)) = 2R^{-1}p_i\xi + 2R\,B_i- q_i\,dR.\] 
\end{proposition}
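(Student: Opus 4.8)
The plan is to compute both sides of \eqref{nat} directly and match them. First I would exploit Theorem~\ref{si}, which gives $\si = -\half\sum_j dp_j\we dq_j$ with $\bfp=\bfu+\bfv$ and $\bfq=R(\bfu-\bfv)$. The natural starting point is to understand the $1$-form $E^\natural$ when $E$ is chosen so that $E^\flat$ is the relatively simple form $d(Rp_i)$. Rather than invert $\gg_2$ (which is messy), I would instead take the tangent vector $E$ dual (via $\gg_2$) to $d(Rp_i)$ as \emph{given} and compute $E\ip\si$ using the Darboux expression. Since $\si$ is constant-coefficient in the $(\bfp,\bfq)$ coordinates, the contraction $E\ip\si = -\half\sum_j\big((E p_j)\,dq_j - (E q_j)\,dp_j\big)$, so the whole problem reduces to evaluating the directional derivatives $Ep_j$ and $Eq_j$, i.e.\ to identifying the vector field $E$ explicitly. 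To find $E$, I would write $\gg_2$ in the form given by Theorem~\ref{g2}, namely $\gg_2=\half dR^2+\half|d\bfu+d\bfv|^2+\frac2N|\bfB|^2+\frac1{2N}\Ga_+^2-\frac1{4N}\Ga_-^2$, and solve the linear system $\gg_2(E,\cdot)=d(Rp_i)$.

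The key intermediate computation is thus: express $d(Rp_i)$ in terms of the natural $\gg_2$-orthogonal-ish building blocks. Note $d(Rp_i) = p_i\,dR + R\,dp_i = p_i\,dR + R(d\u_i+d\v_i)$. Since $\half|d\bfu+d\bfv|^2$ contributes the $R(d\u_i+d\v_i)$ part and $\half dR^2$ contributes a $dR$ piece, while the $\bfB$-terms and $\Ga_\pm$-terms are built from $\bfB=u\,d\bfv-v\,d\bfu$ and $\Ga_\pm=\frac1{uv}\bfA_\pm\cd\bfB$, I expect the dual vector $E$ to decompose as a combination of the $dR$-dual vector field $\pd_u+\pd_v$ (recall this is $e$-dual to $dR$, and one checks it is also $\gg_2$-dual up to the obvious factor), the vector fields dual to $d\u_i+d\v_i$ under $\half|d\bfu+d\bfv|^2$, and whatever is needed to kill the cross-terms coming from $\bfB$ and $\Ga_\pm$. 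The claimed answer $N^{1/2}\JJ(d(Rp_i)) = 2R^{-1}p_i\,\xi + 2R\,B_i - q_i\,dR$, where $\xi=\half\sum_j q_j\,dp_j$, strongly suggests that in fact $E = 2R^{-1}p_i\,\pd_\xi + 2R\,(\text{vector dual to }B_i) - q_i\,\pd_R$ up to normalization — equivalently, I would guess $E$ first from the target, then verify $\gg_2(E,\cdot)=d(Rp_i)$ and $E\ip\si = 2R^{-1}p_i\xi + 2RB_i - q_i\,dR$ by direct substitution. The verification that $E\ip\si$ has the stated form is then the cleaner of the two checks: since $\xi = \half\sum q_j\,dp_j$ is (up to sign) the ``other half'' of the primitive of $\si$, contracting $\si$ with the $\xi$-direction and the $B_i$-directions should produce exactly these terms after using $\bfq=R(\bfu-\bfv)$, $\bfp=\bfu+\bfv$.

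The main obstacle I anticipate is the bookkeeping in the $\gg_2$-duality check: the terms $\frac2N|\bfB|^2 + \frac1{2N}\Ga_+^2 - \frac1{4N}\Ga_-^2$ are not diagonal in any obvious basis, and $\bfB$, $\bfA_\pm$ involve the non-smooth radii $u,v$, so one must be careful that the combination $2R^{-1}p_i\xi + 2RB_i - q_i\,dR$ pairs to $0$ against every vector $\gg_2$-orthogonal to $E$ and to $\|d(Rp_i)\|^2_{\gg_2}$ against $E$ itself. A cleaner route, which I would try first, is to bypass solving for $E$ altogether: use the identity $\si(\cdot,\cdot) = N^{1/2}\gg_2(\JJ\cdot,\cdot)$ in the dual form $\al^\natural = N^{1/2}\JJ\al^\flat$ only as a definition, and instead verify the proposition by checking the equivalent statement that the $1$-form $\om_i := 2R^{-1}p_i\xi + 2RB_i - q_i\,dR$ satisfies $N^{1/2}\,\si^{-1}(\om_i,\cdot) = -\,\gg_2^{-1}(d(Rp_i),\cdot)$ — but since $\si^{-1}$ is just the constant Poisson bivector $-2\sum\pd_{p_j}\we\pd_{q_j}$, computing $\si^{-1}(\om_i,\cdot)$ is trivial once $\om_i$ is written in $(\bfp,\bfq)$-coordinates, and then one only needs a \emph{single} $\gg_2$-computation, namely that this resulting vector is $\gg_2$-dual to $-d(Rp_i)$, using Theorem~\ref{g2}. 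I would expect this last $\gg_2$-pairing to collapse via the component identities already proved in the course of Theorem~\ref{g2} (the $B_iB_j$ relations and $\half N - a_- = 2a_+$), so the computation, while lengthy, should be mechanical rather than conceptually hard.
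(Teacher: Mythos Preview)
Your plan would work, but it takes the hard route. The paper's proof is essentially a two-line computation once you spot the right vector field: rather than solving $\gg_2(E,\cdot)=d(Rp_i)$ for $E$, or inverting via the Poisson bivector, simply \emph{posit}
\[
E = u\,\frac{\pd}{\pd u_i} + v\,\frac{\pd}{\pd v_i}.
\]
The point is that $E$ lies in the kernel of every component $B_j = u\,dv_j - v\,du_j$ of $\bfB$ (since $B_j(E) = uv\,\delta_{ij} - vu\,\delta_{ij} = 0$), and hence also of $\Ga_\pm = (uv)^{-1}\bfA_\pm\cd\bfB$. So in Theorem~\ref{g2} only the first two terms of $\gg_2$ survive the contraction, and one reads off
\[
E^\flat = \half(u_i+v_i)\,dR + \half R\,(du_i+dv_i) = \half\,d(Rp_i).
\]
Computing $E^\natural = E\ip\si$ from Theorem~\ref{si} is then a short direct calculation (using $E\ip du = u_i$, $E\ip dv = v_i$), and \eqref{nat} gives the stated identity after doubling.

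Your route of reconstructing $E$ from the target, or of checking via $\si^{-1}$, would eventually land on this same $E$ (up to the factor $2$), but only after solving a linear system in the awkward $\bfB,\Ga_\pm$ block --- precisely the ``bookkeeping'' you flagged as the obstacle. The paper sidesteps it entirely by choosing $E$ to annihilate that block from the outset. The hint is in the sentence just before the proposition: one seeks $E$ making $E^\flat$ simple, not $E$ making $E^\flat$ equal to a prescribed form --- and ``simple'' here means ``in the kernel of $\bfB$''.
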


\begin{proof} 
For clarity of notation, we set $i=3$ and define
\[  \E = u\frac\pd{\pd u_3}+v\frac\pd{\pd v_3}.\]
This belongs to the annihilator of the 1-form $\bfB=(B_1,B_2,B_3),$ and so has
zero contraction with both 1-forms $\Ga_+,\Ga_-$ {\gr defined by \eqref{Ga+-}.}
Theorem \ref{g2} then implies that
\[\ts \E^\flat = 
\frac12R(d\u_3+d\v_3) +\frac12(\u_3+\v_3)dR = \frac12d(Rp_3).\]
On the other hand,
\[\ts \E^\natural = 
-\half(\u_3\-\v_3)R\,dR +
\half(\u_3\+\v_3)\suml_{j=1}^3(u_j\-v_j)(d\u_j+d\v_j) + RB_3.\]
The result follows from \eqref{nat}.
\end{proof} 

From the discussion in Section \ref{GH}, we have
\[ N^{-1/4}\psi^+ = X\ip(\vasphi)= d\beta,\]
where $\beta=-\qart R^4\,X\ip\Re\Up$. Expressing $\beta$ in terms of
$\buv$ is not hard:\vs

\begin{lemma}\label{beta}
The 2-form $\beta$ is given by 
\[ 16R^{-1}\,\beta\ \ceq\
3\T\bfu,d\bfv,d\bfv, - uv^{-1}\T\bfv,d\bfv,d\bfv,+
4\T\bfu,d\bfu,d\bfv,.\]
\end{lemma}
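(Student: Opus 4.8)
The plan is to compute $\beta = -\tfrac14 R^4\,(X\ip\Re\Up)$ directly, working on $\R^8$ where all the relevant tensors have been given explicit expressions. First I would recall from Lemma~\ref{NK} that $\Up = (\hal_2+i\hal_3)\we(\tu_2-i\tu_3)$, so that $\Re\Up = \hal_2\we\tu_2 + \hal_3\we\tu_3$. Contracting with $X$ and using $X\ip\hal_i = \mu_i/R$ together with the semi-basic property $Y_i\ip\tu_j=0$ (so that $X\ip\tu_i$ involves only the combinations one computes from $\tu_i = d\hal_i + 2\hal_{jk}$, i.e.\ $X\ip\tu_2 = X\ip d\hal_2 + 2(\mu_3\hal_1-\mu_1\hal_3)/R$ and similarly for $\tu_3$), one obtains $X\ip\Re\Up$ as a combination of wedges of the $\hal_i$ with the $\mu_i$ and with terms like $X\ip d\hal_i$. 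The key input is that $X\ip d\hal_i = \sL_X\hal_i - d\mu_i/R$ type identities, which together with $\sL_X\hal_1=0$ let one express everything in terms of the $\mu_i$, the $\hal_i$, and $d$ of these. This is the same mechanism used in the proof of Lemma~\ref{Ga}, where the 1-form $\mu_2\al_3-\mu_3\al_2$ was shown to pass to $\sM$ and equal $-\tfrac12\Ga_-$; here one needs the analogous passage for the 2-form $X\ip\Re\Up$.

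Second, since $\beta$ is $\1$-invariant and satisfies $X\ip\beta = 0$ (it is already a contraction of a 3-form that is annihilated by $\sL_X$, and $X\ip(X\ip\vasphi)=0$), it descends to $\sM$ and must be expressible in the bivector coordinates $\buv$. By $\3$-invariance (Lemma~\ref{j} and the $SO(3)$ discussion of Section~\ref{curv}) and the degree/weight considerations from Lemma~\ref{++}, $R^{-1}\beta$ is an $\3$-invariant 2-form of the appropriate bidegree in $(\bfu,d\bfu)$ and $(\bfv,d\bfv)$, hence a linear combination of the triple products $\T\bfu,d\bfu,d\bfu,$, $\T\bfu,d\bfu,d\bfv,$, $\T\bfu,d\bfv,d\bfv,$, $\T\bfv,d\bfv,d\bfv,$, $\T\bfv,d\bfu,d\bfu,$, $\T\bfv,d\bfu,d\bfv,$, with coefficients that are functions of $u,v,\cuv$ --- constrained further by the $\R^{++}$-scaling from Lemma~\ref{++} to be homogeneous of the right degree, which forces coefficients of the form (constant) or (constant)$\cdot u/v$ or (constant)$\cdot v/u$. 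The $\ceq$-symmetry under $\bfu\leftrightarrow\bfv$ combined with $j^*(\Re\Up) = \Re\ol\Up = \Re\Up$ (from Lemma~\ref{j}, since $j^*\Up=\ol\Up$) pins down the symmetric half. One then reads off the coefficients $3,\ -u/v,\ 4$ (and their $\ceq$-partners) by evaluating both sides on a convenient family of tangent vectors, e.g.\ restricting to $S^2\times S^2$ where $u=v=1$ as in the proof of Theorem~\ref{F}, and then re-inserting the $\R^{++}$-scaling $\buv\mapsto(u^{-1}\bfu,v^{-1}\bfv)$.

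The main obstacle I anticipate is the bookkeeping in the first step: correctly handling $X\ip d\hal_2$ and $X\ip d\hal_3$, which are \emph{not} zero (only $X\ip d\hal_1$ vanishes, because of the rescaling ensuring $\sL_{Y_1}\hal_1 = d(Y_1\ip\hal_1)$), so the contraction $X\ip\tu_i$ picks up genuine connection terms $\hal_j\we(\text{stuff})$ that must be carried through. A clean way to sidestep some of this is to use the alternative primitive: since $\vasphi = d(\tfrac14 R^4\,\Re\Up)$ by Proposition~\ref{exact}, and since there is the Euclidean-coordinate formula $X\ip\al_i=\mu_i$ with the $\mu_i$ explicitly quadratic in the $x_j$ (equation \eqref{mus}) and $\Re\Up$ expressible via $\tu_0$-type 2-forms and the $\al_i$, one can instead compute $X\ip\Re\Up$ entirely in the $x_j$-coordinates and then translate into $\u_i,\v_j$ using \eqref{uuu}, \eqref{vvv}, \eqref{mus}. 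This reduces the whole computation to polynomial algebra in eight variables --- tedious but mechanical --- and the triple-product identities of Notation~\ref{triple} then package the answer. Either route, I would verify the final formula by checking both sides against the substitution $\bfv=\bfu$ (which, via the consistency of the triple-product scheme, must reduce $16R^{-1}\beta$ to $(3-u/v+4)\T\bfu,d\bfu,d\bfu,|_{\bfv=\bfu} + \text{($\ceq$-partner)} = 12\,\T\bfu,d\bfu,d\bfu,$, matching the direct computation of $\beta$ on the diagonal).
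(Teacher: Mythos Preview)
The paper gives no proof of this lemma: it simply says ``Expressing $\beta$ in terms of $\buv$ is not hard'' and states the formula. So there is nothing to compare your argument against, and your proposal is effectively a proof sketch for a result the authors left as a computation.

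Your strategy is sound. Writing out $X\ip\Re\Up$ from $\Re\Up=\hal_2\we\tu_2+\hal_3\we\tu_3$, pushing everything into the $x_j$-coordinates via \eqref{mus}, \eqref{uuu}, \eqref{vvv}, and then repackaging with Notation~\ref{triple} will certainly produce the answer; the $\3$-invariance and the $\hj$-symmetry (your observation $j^*\Re\Up=\Re\Up$, $j_*X=X$, hence $\hj^*\beta=\beta$) correctly justify the $\ceq$ form.

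One genuine correction: your appeal to $\R^{++}$-scaling from Lemma~\ref{++} is misplaced. That lemma is specific to $\Th_c$ and relies on delicate cancellations in $X\ip\hh_c$; there is no reason for $\beta$ to be $\R^{++}$-homogeneous, and in fact the three visible triple products in the stated formula carry \emph{different} weights under $\bfu\mapsto\la^2\bfu$ (namely $\la^2$, $\la^2$, $\la^4$). What you do have is homogeneity under the \emph{diagonal} cone scaling $\R^+$, which only forces each term to have total degree $3$ in $(\bfu,\bfv,d\bfu,d\bfv)$ and leaves the coefficients as arbitrary functions of $u/v$ and $\cuv/(uv)$. So your claim that the coefficients are forced to be ``constant or constant${}\cdot u/v$'' is an ansatz, not a consequence of symmetry. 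This is not fatal --- your fallback of doing the polynomial algebra in the $x_j$ and then matching will pin down the coefficients without that shortcut --- but you should drop the $\R^{++}$ step from the write-up.
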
\vs

To find $\psi^-$ and the $(3,0)$-form $\Psi=\psi^++i\psi^-,$ one needs
to involve $\Th_2$ more directly. This is achieved in the next result,
which we quote without proof. It illustrates the complexity of the
$SU(3)$ structure induced on $\sM$.

\begin{theorem}\label{Psi}
The space of $(3,0)$ forms on $\sM$ is generated by
$\Psi=\psi^++i\psi^-,$ where
\[\ba{rcl}
8uv\,\psi^+\kern-10pt &\ceq& 
\frac16v(N_2+4v^2)\T d\bfu,d\bfu,d\bfu, 
- v(4u^2+3uv+\cuv)\T d\bfv,d\bfu,d\bfu,\y
&&\kern-15pt 
+\big((u+2v)\bfv\cd d\bfv + v\bfu\cd d\bfv\big)\we\T\bfu,d\bfu,d\bfu,  
+(v\bfu\cd d\bfv - u\bfv\cd d\bfv)\we\T\bfv,d\bfu,d\bfu,.\yyy

4N_2^{1/2}\psi^-\kern-10pt &\ceqq&
\frac13(N_2+4v^2)\T d\bfu,d\bfu,d\bfu, 
+ \big((3+uv^{-1})\cuv -3u^2-5uv\big)\T d\bfu,d\bfv,d\bfv,\y 
&&\kern-15pt 
+ 2\,\bfv\cd d\bfv\we\T\bfu,d\bfu,d\bfu,
+ 2\,uv^{-1}\bfv\cd d\bfu\we\T\bfv,d\bfv,d\bfv,\y
&&\kern-15pt
+ \big((1-uv^{-1})\bfv\cd d\bfv+(3+vu^{-1}\big)\bfu\cd d\bfv)
\we\T\bfv,d\bfu,d\bfu,.
\ea\]
\end{theorem}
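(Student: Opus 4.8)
The plan is to compute $\psi^+ = N_2^{1/4}\,X\ip(\vasphi)$ and $\psi^- = N_2^{3/4}\,(\JJ\psi^+)$ directly on $\R^8$, pulling everything back via $\rho$ and expressing the result in the bivector coordinates $\buv$. For $\psi^+$ this is essentially mechanical: Lemma \ref{beta} already records $\beta = -\qart R^4\,X\ip\Re\Up$ in triple-product notation, and by the remark preceding Theorem \ref{Psi} we have $N_2^{-1/4}\psi^+ = X\ip(\vasphi) = d\beta$. So the first step is simply to differentiate the expression for $\beta$ from Lemma \ref{beta}, using the exterior-derivative identities for triple products collected in Notation \ref{triple} (e.g.\ $\T d\bfu,d\bfv,d\bfv,=d\T\bfu,d\bfv,d\bfv,$ and $\T\bfu,d\bfu,d\bfv, = d\T\bfu,\bfv,d\bfu, + 2\T\bfv,d\bfu,d\bfu,$), together with the scalar relations $du = u^{-1}\bfu\cd d\bfu$, $dv = v^{-1}\bfv\cd d\bfv$, and $N_2 = 6uv - 2\cuv$. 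Collecting terms and clearing denominators to the form $8uv\,\psi^+$ should reproduce the stated formula; the $\ceq$ shorthand means only half of the terms need be written, the rest following by the $\bfu\leftrightarrow\bfv$ symmetry that is manifest because $X\ip(\vasphi)$ is $\hat\j$-invariant by Lemma \ref{j}.

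For $\psi^-$ the cleanest route is to use $\psi^- = J\psi^+$ together with the identity $\JJ\E^\flat = N_2^{-1/2}\E^\natural$ from \eqref{nat}, applied to a spanning set of tangent vectors. Concretely, since $\psi^+ = d\beta$ is already a sum of terms of the shape $(\text{function})\,\T\cdots,$ and $\psi^-$ must be a $(2,1)+(1,2)$-free combination, one writes $\psi^+$ as a wedge of three $(1,0)$-forms (the $(3,0)$-form structure forces $\psi^+ = \Re(\theta_1\we\theta_2\we\theta_3)$ for suitable complex 1-forms $\theta_k$) and then $\psi^- = \Im(\theta_1\we\theta_2\we\theta_3)$, i.e.\ $\psi^-$ is obtained by applying $\JJ$ to each factor in turn. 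Here Proposition \ref{10}, which computes $N^{1/2}\JJ\,d(Rp_i)$ explicitly, is the key input: the forms $d(Rp_i)$, $dR$, and the $B_i$ span the cotangent space away from $uv=0$, and $\JJ$ is determined on all of them once one knows its action on $d(Rp_i)$ and imposes $\si(W,Y) = N_2^{1/2}\gg_2(\JJ W,Y)$ from \eqref{siNg}. So the second step is to re-express $\psi^+$ in the basis $\{dR, d(Rp_i), B_i\}$, apply $\JJ$ factorwise using Proposition \ref{10} and Theorem \ref{g2}, and translate back into triple-product notation. The $\ceqq$ shorthand (antisymmetrization under $\bfu\leftrightarrow\bfv$) is consistent with Lemma \ref{j}'s assertion that $\hat\j^*\Psi = \ol\Psi$, hence $\hat\j^*\psi^- = -\psi^-$, so again only half the terms must be produced.

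The last step is to verify the normalizations: that $\Psi\we\si = 0$, that $-i\,\Psi\we\ol\Psi = \tfrac43\si^3$ with $\si$ rescaled as in Remark \ref{scale} (equivalently, that the $(3,0)$-form compatible with $\gg_2$ is $N_2^{3/4}\Psi$ up to the conventional constant), and that $\psi^\pm$ are stable. These are finite algebraic checks in the $\buv$-coordinates, simplest to run after substituting $u=v$ or $\bfu\times\bfv = 0$ to reduce the number of independent monomials, then confirming the generic identity by a single explicit evaluation or by Maple, exactly as was done for Theorem \ref{g2} via \eqref{BBBB}. The main obstacle I expect is purely the bookkeeping in the second step: $\JJ$ has no simple closed form (this is precisely the contrast the authors emphasize with the Darboux simplicity of $\si$), so one must carry a moderately large basis of 1-forms through the antisymmetrized wedge, and the temptation to expand everything into the $d\u_i, d\v_j$ basis must be resisted in favor of keeping the triple-product packaging, or the intermediate expressions become unmanageable. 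Organizing the computation around the three $(1,0)$-forms $\theta_k$ — so that $\psi^+ + i\psi^-$ literally factors — is what keeps it tractable.
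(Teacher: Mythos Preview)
The paper does not prove Theorem \ref{Psi}; the sentence immediately preceding it reads ``we quote without proof,'' and the only argument offered afterwards is a computer check that the $(1,0)$-forms of Proposition \ref{10} annihilate $\Psi$. So there is no detailed proof to compare against, only the hints in the surrounding text.

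Your plan for $\psi^+$ (differentiate the $\beta$ of Lemma \ref{beta}) is exactly what the paper sets up with the line $N^{-1/4}\psi^+ = X\ip(\vasphi) = d\beta$. One caution: you assert that ``clearing denominators to the form $8uv\,\psi^+$ should reproduce the stated formula,'' but $8uv\,\psi^+ = 8uv\,N^{1/4}d\beta$ carries an irremovable fourth-root factor, while the displayed right-hand side in the theorem is visibly polynomial in $u,v,\cuv$ and the triple products. Either the theorem's $\psi^+$ is really $\ep\psi^+ = d\beta$ (a silent convention shift between Sections \ref{GH} and \ref{SU3}) or there is a suppressed factor on the left. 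You should settle this by matching a single coefficient --- say that of $\T d\bfu,d\bfu,d\bfu,$ --- before doing the full expansion.

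For $\psi^-$, your route through $\JJ$ is workable but is not what the paper points to. The remark ``one needs to involve $\Th_2$ more directly'' refers to the decomposition $\varphi = \half\Th_2\we\si + N^{-3/4}\psi^-$ inherited from Section \ref{GH} (recall $\th=\half\Th_2$ because of the normalization $X\ip\Th_2=2$). Since $\varphi$ is explicit from \eqref{match}, $\Th_2$ from Proposition \ref{th}, and $\si$ from Theorem \ref{si}, one can simply form $\varphi - \half\Th_2\we\si$ on $\R^8$, observe it is horizontal and $\1$-invariant, and read off $N^{-3/4}\psi^-$ in $\buv$-coordinates. This bypasses $\JJ$ entirely. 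Your alternative --- building the three $(1,0)$-forms $\vep_i$ from Proposition \ref{10}, forming $\vep_1\we\vep_2\we\vep_3$, and fixing the scalar by comparison with $\psi^+$ --- is precisely what the paper uses \emph{afterwards} as a computer consistency check, not as the derivation. It works, but expanding that triple wedge (each $\vep_i$ has several terms involving $\xi$, $B_i$, $dR$) and then solving for the complex proportionality factor is appreciably heavier than the $\Th_2$ subtraction.
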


\noindent We have verified by computer and Proposition \ref{10} that the
$(1,0)$-form
\[\gr \ga = (d(Rp_i))^{1,0} = (1-i\JJ)d(Rp_i)\]
satisfies $\ga\we\Psi=0$.\smallbreak

\begin{corollary}\label{non}
The almost complex structure $\JJ$ induced on $\sM$ is not integrable.
\end{corollary}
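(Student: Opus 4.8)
The plan is to use the Newlander--Nirenberg criterion in the form adapted to an $SU(3)$ structure. Since $\Psi$ is a nowhere-zero $(3,0)$-form on the smooth locus $\sM',$ it spans the line $\La^{3,0}$ at each point, so $\JJ$ is integrable on $\sM'$ if and only if $d\ga\we\Psi=0$ for every $(1,0)$-form $\ga;$ equivalently, $d\Psi$ must be of pure type $(3,1),$ with vanishing $(2,2)$-component. (Both reformulations are elementary: for a $2$-form $\beta$ the product $\beta\we\Psi$ annihilates the $(2,0)$ and $(1,1)$ parts of $\beta$ and is non-zero exactly when $\beta^{0,2}\ne0,$ and involutivity of the $(0,1)$-distribution amounts to saying that $d\ga$ has no $(0,2)$-component for every $(1,0)$-form $\ga.$) Consequently it suffices to exhibit a single $(1,0)$-form $\ga$ and a single point of $\sM'$ at which $d\ga\we\Psi\ne0.$

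For $\ga$ I would take $(1-i\JJ)\,d(Rp_i)$ for one fixed $i\in\{1,2,3\}$ (notation \eqref{pq}); the verification recorded just above the corollary already notes $\ga\we\Psi=0,$ consistently with $\ga$ being of type $(1,0).$ Proposition \ref{10} makes $\JJ\,d(Rp_i)$ explicit, namely $N^{1/2}\JJ\big(d(Rp_i)\big)=2R^{-1}p_i\,\xi+2R\,B_i-q_i\,dR$ with $\xi=\half\sum_j q_j\,dp_j$ and $B_i=u\,dv_i-v\,du_i,$ and since $d(Rp_i)$ is exact this yields
\[ d\ga\ =\ -i\,d\!\left(N^{-1/2}\big(2R^{-1}p_i\,\xi+2R\,B_i-q_i\,dR\big)\right),\]
a $2$-form written purely in the bivector coordinates $\buv.$ Differentiating the identity $\ga\we\Psi=0$ gives $d\ga\we\Psi=\ga\we d\Psi;$ if $\JJ$ were integrable, $d\Psi$ would be of type $(3,1)$ and the right-hand side would have bidegree $(1,0)\we(3,1)=(4,1)=0,$ forcing $d\ga\we\Psi\equiv0$ on $\sM'.$ Substituting $\Psi$ from Theorem \ref{Psi} and the expression for $d\ga$ above, wedging, and evaluating at a point in general position---for instance $\bfu=(1,0,0),$ $\bfv=(0,1,0),$ where $u=v=1,$ $\cuv=0,$ $R=2,$ $N=6$---reduces the whole matter to checking that one coefficient of the resulting top-degree form is non-zero, which shows $d\ga\we\Psi\ne0$ there and hence that $\JJ$ is non-integrable.

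The only real obstacle is the computation itself: assembling $d\ga$ and $\Psi$ in the triple-product notation of Notation \ref{triple} and extracting a non-vanishing component without slips. Picking an evaluation point with many vanishing coordinates keeps the algebra short, and the check may be done by hand or, as with the companion verification preceding the corollary, by computer; one should just make sure that the surviving monomials at the chosen point do not all cancel (a fully generic point always works). If one prefers to bypass Theorem \ref{Psi}, an equivalent route is to compute $d\psi^+$ from the same data and display its non-zero $(2,2)$-component; that component is detected by wedging $d\psi^+$ against any $(1,1)$-form not proportional to $\si,$ since $d\psi^+\we\si=d(\psi^+\we\si)-\psi^+\we d\si=0$ identically (both $\psi^+\we\si$ and $d\si$ vanish). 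No conceptual input beyond the bidegree criterion is required, since Proposition \ref{10} already supplies an explicit $(1,0)$-form and Theorem \ref{Psi} an explicit complex volume form.
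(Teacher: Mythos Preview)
Your main route via the Newlander--Nirenberg criterion is sound in principle but differs from the paper's argument, and your proposed shortcut via $d\psi^+$ is actually a dead end.

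The paper does not test $d\ga\we\Psi$ for a $(1,0)$-form $\ga$.  Instead it invokes Remark~\ref{scale}, which in this circle-quotient setting gives the sharper criterion that $\JJ$ is integrable if and only if $d(N^{-1/4}\psi^-)=0$.  The proof then reports a single computer evaluation, wedging $d(N^{-1/4}\psi^-)$ with $d\u_1\we d\u_2$ at the point $\bfu=\bfv=(1,2,2)$ and obtaining $\frac23\,d\u_{123}\we d\v_{123}\ne0$.  This is shorter than your route: it uses only $\psi^-$ (not the whole of $\Psi$), avoids constructing an explicit $(1,0)$-form, and exploits the special structure of the $\2$ quotient rather than the general bidegree criterion.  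Your approach is more general-purpose and perfectly valid, but you stop short of actually performing the evaluation; the parenthetical ``a fully generic point always works'' presupposes that $d\ga\we\Psi$ is not identically zero, which is precisely what the corollary asserts.

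More seriously, your alternative route---detecting a $(2,2)$-component of $d\psi^+$---cannot succeed.  By Lemma~\ref{FF} (and the line above Lemma~\ref{beta}) the form $N^{-1/4}\psi^+=X\ip(\vasphi)$ is closed, so $d\psi^+=\tfrac14 N^{-1}dN\we\psi^+$.  Since $dN$ has type $(1,0)+(0,1)$ and $\psi^+=\Re\Psi$ has type $(3,0)+(0,3)$, the product has only $(3,1)+(1,3)$ components on a $6$-manifold; the $(2,2)$-part of $d\psi^+$ vanishes automatically, independently of integrability.  The obstruction lives entirely in $\psi^-$, which is exactly why the paper's criterion is phrased in terms of $d(N^{-1/4}\psi^-)$.
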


\begin{proof}
From Remark \ref{scale}, one must verify that
$d(N_2^{-1/4}\psi^-)\ne0$. In fact, to do this, we have computed
\[  d(N_2^{-1/4}\psi^-)\we d\u_1\we d\u_2\>\big|_{(2,2,1;2,2,1)} = 
{\ts\frac23}d\u_{123}\we d\v_{123},\] 
where the left-hand side has been evaluated at $\bfu=(1,2,2)$ and
$\bfv=(1,2,2),$ so that (conveniently) $u=v=3$.
\end{proof}

The next result implies that $\psi^+$ vanishes on the 3-dimensional
subspace $\left<d\u_i,d\v_i,dR\right>^\mathrm{o}$ of $T_m\sM,$ for any
$m$ and fixed $i$:

\begin{lemma}\label{gen}
\[ \psi^+\we d\u_i\we d\v_i\we dR = 0,\qquad i=1,2,3.\]
\end{lemma}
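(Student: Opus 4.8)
The identity is local on $\sM\sm\{R=0\},$ so the plan is to pull everything back along $\pq$ and argue on $\sC.$ Since $\u_i,\v_i$ and $R$ are coordinates (resp.\ a function) on $\sM,$ they are invariant under the $\1$ generated by $X,$ hence $X\ip d\u_i=X\ip d\v_i=X\ip dR=0.$ Using $N^{-1/4}\psi^+=X\ip\vasphi,$ this yields
\[ N^{-1/4}\,\psi^+\we d\u_i\we d\v_i\we dR = (X\ip\vasphi)\we d\u_i\we d\v_i\we dR = X\ip\big(\vasphi\we d\u_i\we d\v_i\we dR\big),\]
so it is enough to show that the $7$-form $\vasphi\we d\u_i\we d\v_i\we dR$ vanishes on $\sC.$ From the proof of Proposition \ref{exact}, $\vasphi=R^3\,dR\we\Re\Up+\half R^4\om^2,$ and the first summand carries a second factor of $dR,$ so one is reduced to $\om^2\we d\u_i\we d\v_i\we dR=0.$ Here $\om$ is pulled back from $\CP^3,$ and $\u_i=Rf_i,$ $\v_i=Rg_i,$ where $f_i=\u_i/R$ and $g_i=\v_i/R$ are functions on $\CP^3;$ thus $d\u_i\we d\v_i\we dR=R^2\,df_i\we dg_i\we dR,$ and since $dR$ together with a coframe of $\CP^3$ is a coframe of $\sC,$ the required vanishing is equivalent to $\om^2\we df_i\we dg_i=0$ on $\CP^3.$ As the nearly-K\"ahler form $\om$ is non-degenerate in dimension $6,$ at each point $\frac12\om^2\we df_i\we dg_i$ is a fixed nonzero multiple of $W(g_i)\,\om^3,$ where $W$ is the vector field with $W\ip\om=df_i;$ so I am reduced to proving $W(g_i)=0.$

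To get at $W,$ I would use $\om=d\hal_1+3\hal_{23}$ from Lemma \ref{NK}. From $Y_j\ip\tu_1=0$ and $Y_i\ip\hal_j=\de_{ij}$ one finds $Y_2\ip\om=\hal_3,$ $Y_3\ip\om=-\hal_2$ and $Y_2\ip d\hal_1=-2\hal_3,$ $Y_3\ip d\hal_1=2\hal_2.$ Contracting $W\ip\om=df_i$ with $Y_2$ and $Y_3$ gives $\hal_3(W)=-Y_2(f_i)$ and $\hal_2(W)=Y_3(f_i),$ hence $W\ip\hal_{23}=Y_3(f_i)\hal_3+Y_2(f_i)\hal_2;$ since $d\hal_1$ is non-degenerate on $\CP^3,$ the relation $(W-W')\ip d\hal_1=-3\,W\ip\hal_{23}$ (with $W'\ip d\hal_1=df_i$) becomes
\[ W = W' + {\ts\frac32}\big(Y_3(f_i)\,Y_2-Y_2(f_i)\,Y_3\big).\]
Now $W'$ is the K\"ahler Hamiltonian field of $f_i$ on $(\CP^3,J_1)=S^7/\1,$ and $(f_1,f_2,f_3)$, $(g_1,g_2,g_3)$ are, up to a linear change of $\R^3,$ the moment maps for the two $SU(2)$ factors of $Sp(2)$ acting on $\langle\z_0,\z_2\rangle$ and $\langle\z_1,\z_3\rangle$ in $\C^4;$ these actions commute with one another and with the diagonal $\1,$ so the flow of $W'$ preserves every $g_j$ and $W'(g_i)=0.$ Hence $W(g_i)={\ts\frac32}\big(Y_3(f_i)Y_2(g_i)-Y_2(f_i)Y_3(g_i)\big),$ and, using $Y_jR=0$ to write $Y_j(f_i)=(Y_j\u_i)/R,$ $Y_j(g_i)=(Y_j\v_i)/R,$ the vanishing of $W(g_i)$ amounts to the polynomial identity
\[ (Y_3\u_i)(Y_2\v_i) = (Y_2\u_i)(Y_3\v_i),\qquad i=1,2,3,\]
which I would verify from the formulae for $Y_2,Y_3$ in Section \ref{holo} and for $\u_i,\v_i$ in \eqref{uuu}--\eqref{vvv}; for $i=1$ both sides equal $4(\x_0\x_2+\x_1\x_3-\x_4\x_6-\x_5\x_7)(\x_1\x_2-\x_0\x_3+\x_4\x_7-\x_5\x_6),$ and $i=2,3$ are analogous.

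The hard part will be the middle step — seeing that the nearly-K\"ahler correction $3\hal_{23}$ alters the Hamiltonian field only by a combination of $Y_2$ and $Y_3$ — together with the closing quadratic identity, which in the end is settled either by the commuting-$SU(2)$ symmetry or simply by machine. A more pedestrian alternative would be to substitute the explicit $\psi^+$ of Theorem \ref{Psi} directly into $\psi^+\we d\u_i\we d\v_i\we dR$: every triple product that contains $d\bfu$ or $d\bfv$ then acquires a repeated differential and drops, leaving only the terms in which a scalar $1$-form multiplies $\T\bfu,d\bfu,d\bfu,$ or $\T\bfv,d\bfu,d\bfu,$ (and their $\bfu\leftrightarrow\bfv$ swaps), whose cancellation is a routine bookkeeping check.
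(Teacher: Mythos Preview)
Your proof is correct, but it takes a genuinely different route from the paper. The paper proceeds exactly as in your ``pedestrian alternative'': it fixes $i=3$, writes out $8uv\,\psi^+$ from Theorem~\ref{Psi}, wedges with $d\u_3\we d\v_3$ and then with $dR$, and observes that the surviving expression is antisymmetric under $\bfu\leftrightarrow\bfv$, so the symmetrization hidden in $\ceq$ kills it. This is a short bookkeeping computation that leans entirely on the explicit (and rather heavy) formula for $\psi^+$.

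Your main argument is more conceptual. By pulling back along $\pq$ and using $N^{-1/4}\psi^+=X\ip\vasphi$, you reduce to showing $\om^2\we df_i\we dg_i=0$ on $\CP^3$, and then reinterpret this as $W(g_i)=0$ for the $\om$-Hamiltonian field $W$ of $f_i$. The key insight---that the nearly-K\"ahler correction $3\hal_{23}$ deforms the K\"ahler Hamiltonian $W'$ only by a vertical combination $\tfrac32\big(Y_3(f_i)Y_2-Y_2(f_i)Y_3\big)$---is clean, and the vanishing of $W'(g_i)$ follows immediately once you recognize $(f_1,f_2,f_3)$ and $(g_1,g_2,g_3)$ as moment maps for the two commuting $SU(2)$ factors acting on $\langle\z_0,\z_2\rangle$ and $\langle\z_1,\z_3\rangle$. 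The residual quadratic identity $(Y_3\u_i)(Y_2\v_i)=(Y_2\u_i)(Y_3\v_i)$ then drops out of this same symmetry (indeed $Y_2\u_i=-Y_2\v_i$ and $Y_3\u_i=-Y_3\v_i$ for $i=1$, and analogously for $i=2,3$). One small point worth making explicit: $Y_2,Y_3$ and $\hal_2,\hal_3$ do not individually descend to $\CP^3$, but the combination $Y_3(f_i)Y_2-Y_2(f_i)Y_3$ is $Y_1$-invariant and $Y_1$-horizontal, so the computation is consistent. Your approach explains \emph{why} the lemma holds---it traces the cancellation to the commuting $SU(2)\times SU(2)$ symmetry of the coordinates---whereas the paper's direct check is quicker but opaque.
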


\begin{proof}
To check the equation, take $i=3$ again. The exterior product of
$d\u_3\we d\v_3$ with the \emph{visible part} of $8uv\psi^+$ displayed
in Theorem \ref{Psi} equals
\[\ba{l}
 u(\u_3\v_1-\v_1\v_3)+v(\u_1\u_3+\u_1\v_3+2\u_3\v_1)d\v_{31}\we d\u_{123}\y 
\hskip30pt+
u(\u_3\v_2-\v_2\v_3)+v(\u_2\u_3+\u_2\v_3+2\u_3\v_2)d\v_{32}\we d\u_{123}.\ea\]
Wedging further with $dR$ yields
\[ (\u_1\u_3\v_2+\u_1\v_2\v_3-\u_2\u_3\v_1-\u_2\v_1\v_3)d\v_{123}\we d\u_{123},\]
but this is cancelled by the symmetrization implicit in the relation
$\ceq\!\!$.
\end{proof}

In contrast to the lemma, one can verify that
\[ \psi^-\we d\u_3\we d\v_3\we dR\ne0,\]
except at points where $\u_1=\u_2=\v_1=\v_2=0$ or $\u_3=\v_3=0$. It
follows that the subspace $\left<d\u_i,d\v_i,dR\right>$ of $T^*_m\sM$
admits no $\JJ$-invariant 2-plane for generic $m,$ though the key word
is `generic'. For if we restrict the $(3,0)$ form $\Psi$ of Theorem
\ref{Psi} to any 4-dimensional linear subvariety $\sM(\bfn)$ (recall
Definition \ref{sMn} and Corollary \ref{van}), the result is zero
because all the triple products vanish. This implies\bigbreak

\begin{theorem}\label{Jholo}
$\sM(\bfn)$ is $\JJ$-holomorphic for each $\bfn\in S^2$.
\end{theorem}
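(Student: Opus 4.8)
The plan is to read the statement off the formulae of Theorem \ref{Psi}, combined with one pointwise linear-algebra fact. Recall that $\JJ$-holomorphicity of $\sM(\bfn)$ means exactly that $\JJ$ preserves $T_m\sM(\bfn)$ for every $m$; since $\sM(\bfn)\subset\{uv\ne0\}$, the rescaled $SU(3)$ structure $(\si,\JJ,\Psi)$ is defined there and $\Psi_m$ is a nowhere-zero $(3,0)$-covector. The first (and essentially only computational) step is the observation recorded just above the statement: $\Psi$ restricts to zero on $\sM(\bfn)$. On $\sM(\bfn)$ the functions $\bfu\cd\bfn$ and $\bfv\cd\bfn$ vanish identically, hence so do their exterior derivatives, and since $\bfn$ is a constant vector these are $d\bfu\cd\bfn$ and $d\bfv\cd\bfn$; thus the vectors $\bfu,\bfv$ and the vector-valued $1$-forms $d\bfu,d\bfv$ all take values in the fixed $2$-plane $\bfn^\perp\subset\R^3$. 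Every term of the expressions for $\psi^+$ and $\psi^-$ in Theorem \ref{Psi} is built from triple products, all three of whose slots then take values in $\bfn^\perp$; an alternating trilinear expression in three arguments lying in a $2$-dimensional space vanishes identically. This is precisely the mechanism of the first case of Corollary \ref{van}, and it yields $\psi^+|_{\sM(\bfn)}=0=\psi^-|_{\sM(\bfn)}$, hence $\Psi|_{\sM(\bfn)}=0$.

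Next I would convert this into $\JJ$-invariance of $V:=T_m\sM(\bfn)$, a real $4$-plane in the complex $3$-space $(T_m\sM,\JJ)$. Write $\Psi_m=\theta^1\we\theta^2\we\theta^3$ with $\theta^1,\theta^2,\theta^3$ a $\C$-basis of $(1,0)$-covectors at $m$; this is possible because $\Psi_m$ spans the one-dimensional space of $(3,0)$-covectors. The vanishing $\Psi_m|_V=0$ says $\theta^1|_V\we\theta^2|_V\we\theta^3|_V=0$ in $\La^3(V^*\ot\C)$, a complex vector space of dimension $4$, so the three covectors $\theta^j|_V$ are $\C$-linearly dependent; hence some nonzero combination $\theta=\sum_j c_j\theta^j$, still of type $(1,0)$, satisfies $\theta|_V=0$. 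A nonzero $(1,0)$-form has image all of $\C$ (its image is a nonzero $i$-invariant $\R$-subspace of $\C$), so $\ker\theta$ has real codimension $2$; and since $\theta(\JJ v)=i\,\theta(v)$ for all $v$, the kernel $\ker\theta$ is a $\JJ$-invariant real $4$-plane. As $V\subseteq\ker\theta$ and both have real dimension $4$, we get $V=\ker\theta$, which is $\JJ$-invariant. Since $m$ was arbitrary, this proves the theorem.

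I do not anticipate a genuine obstacle here: the delicate computations have already been packaged into Theorem \ref{Psi}, and the remaining argument is the elementary — but conceptually essential — fact that a real codimension-$2$ subspace annihilated by a (necessarily decomposable) $(3,0)$-form is automatically a complex hyperplane; this is where decomposability of $\Psi$, guaranteed by the $SU(3)$ structure, enters. As a consistency check one can note that, by Theorem \ref{si}, $\si=-\half\sum_{i=1}^{3}dp_i\we dq_i$ with $\bfp=\bfu+\bfv$ and $\bfq=R(\bfu-\bfv)$; since $\bfp\cd\bfn$ and $\bfq\cd\bfn$ vanish identically on $\sM(\bfn)$, their differentials vanish on $V$, and $\si$ restricts to a non-degenerate $2$-form on $V$, exactly as it must on a $\JJ$-complex surface.
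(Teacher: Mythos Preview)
Your proof is correct. You spell out precisely the implication the paper alludes to in the paragraph preceding the theorem (``This implies''): the vanishing of $\Psi$ on $\sM(\bfn)$, read off from the triple-product structure of the formulae in Theorem~\ref{Psi}, together with the linear-algebra fact that a real $4$-plane on which a nonzero decomposable $(3,0)$-form vanishes must be a complex hyperplane.

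However, the paper deliberately presents a different argument, one that ``does not depend on the calculation of $\psi^\pm$''. Fixing $\bfn=(1,0,0)$, it works with the conormal directions $\pd_1=\pd/\pd u_1$ and $\pd_4=\pd/\pd v_1$ along $\R^4=\{u_1=v_1=0\}$, and computes both $\pd_1^\natural,\pd_4^\natural$ from Theorem~\ref{si} and $\pd_1^\flat,\pd_4^\flat$ from Theorem~\ref{g2}. All four lie in $\langle du_1,dv_1\rangle$, so by \eqref{nat} the span $\langle du_1,dv_1\rangle$ is $\JJ$-invariant, and hence so is its annihilator $T\R^4$. The trade-off is clear: your route is more conceptual and exhibits why decomposability of $\Psi$ matters, but it rests on Theorem~\ref{Psi}, which the paper states without proof and describes as illustrating ``the complexity of the $SU(3)$ structure''. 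The paper's route uses only the much simpler closed-form expressions for $\si$ and $\gg_2$, and gives explicit $(1,0)$-forms (cf.\ Proposition~\ref{10}) as a by-product.
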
 

\begin{proof}
  We present an argument that does not depend on the calculation of
  $\psi^\pm$. Take $\bfn=(1,0,0)$ for definiteness, so that
  $\sM(\bfn)$ lies in
\[ \R^4 = \{\buv\in\R^6:\u_1=0=\v_1\}.\]
Consider the tangent vectors 
\[\pd_1=\pd/\pd u_1,\qquad \pd_4=\pd/\pd v_1\]
in $\R^6$ defined along this $\R^4$ (they are normal to the $\R^4$
relative to the flat metric.) It follows Theorem \ref{si} that
\be{RR}\partial_1{}^\natural = R\,d\v_1,\qquad
   \partial_4{}^\natural = -R\,d\u_1.\ee
A key reason for this simplicity is that
\[ \pd_1\ip du = u^{-1}\pd_1\ip(\u_1d\u_1),\]
which vanishes along $\R^4,$ similarly for $\pd_4\ip dv$ and interior
products with $dR=du+dv$. Theorem \ref{g2} implies that
\be{NN}\ba{rcl}     
\partial_1{}^\flat &=&\half N^{-1}\big[(N+4v^2)d\u_1+(N-4uv)d\v_1\big]\y
\partial_4{}^\flat &=&\half N^{-1}\big[(N-4uv)d\u_1 + (N+4u^2)d\v_1\big],
\ea\ee 
where $N=N_2=6uv-2\cuv$. This time, a key point is that the interior
products with $\Ga_+$ and $\Ga_-$ vanish along $\R^4$ (as in the proof
of Proposition \ref{10}). It follows from \eqref{nat} that
$\JJ\,d\u_1$ and $\JJ\,d\v_1$ both belong to $\left<du_1,dv_1\right>$
at all points of $\R^4$ for which $uv\ne0$. The {\gr annihilators} of these
subspaces are the tangent spaces to $\R^4,$ and are therefore
$\JJ$-invariant.
\end{proof}

\begin{remark}\rm
We know from \eqref{nat} that $\JJ\E^\natural=-N^{1/2}E^\flat$ for any
tangent vector $\E$. A computation of $\JJ^2$ involves the determinant
\[ (N+4u^2)(N+4v^2) - (N-4uv)^2 = 4R^2N,\]
and allows us to verify that $\JJ^2=-\bf1$. This confirms that the
symplectic form $\si$ is correctly normalized in Theorem \ref{si}. We
can also strengthen Corollary \ref{non} using the $(1,0)$-forms
$\vep_i=(1-i\JJ)d(Rp_i)$ made explicit in Proposition \ref{10}. A
computation shows that
\[ d\vep_2\we\vep_2\we\vep_3\we d\u_1\we d\v_1 \ne 0
\qbox{along} \u_1=0=\v_1,\]
so the restriction of $\JJ$ to $\sM(\bfn)$ is not integrable. Note that $\JJ$
degenerates across the locus $N=0,$ i.e.\ when $u=0$ or $v=0$.
\end{remark}

Finally, the vanishing of both $\Psi$ and $F_2$ when restricted to $\sM(\bfn)$
{\gr (see Corollary \ref{van}) is consistent with the first equation in Lemma
  \ref{FF}.}

\setcounter{equation}0
\section{Metrics on subvarieties}\label{met}

We return to consider the family of Riemannian metrics $\gg_c$ {\gr introduced}
by Definition \ref{ggc} and described by Lemma \ref{gc}. Restricted to the
hypersurface $R=1,$ each is the pushdown of the metric $\wh_c$ described by
Proposition \ref{hc}. Recall that $\wh_1$ is the K\"ahler metric on $\CP^3$ and
$\hh_2$ has holonomy $\2$ on the cone $\sC=\R^+\ti\CP^3$.  The diagonal action
of $\3$ on
\[\{\buv\in\R^6: \bfu,\bfv\in\R^3\}\]
defines an isometry of $(\sM,\gg_c)$ for any $c<3$.

We first present a result that motivated other results in this
section.

\begin{proposition}\label{1ff}
The restriction of $\gg_c$ to the negative quadrant
\[ \{\buv=(u,0,0;\,-v,0,0):u,v>0\}\subset\sF_-\]
in $\sM$ has first fundamental form
\[ \Big(1+\frac v{2u}\Big)du^2+du\,dv+\Big(1+\frac u{2v}\Big)dv^2\]
independently of $c<3,$ and zero Gaussian curvature.
\end{proposition}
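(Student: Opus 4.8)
The plan is to restrict the general formula for $\gg_c$ from Lemma \ref{gc}, namely
\[
\gg_c = \gg_1 + (1-c)\left[\frac1{8\aa_-}\Ga_-\!^2 +
\frac1{uvN_c\aa_-}\left\{\bfB,\bfu,\bfv\right\}^2\right],
\]
directly to the curve (really surface) parametrized by $(u,v)\mapsto(u,0,0;-v,0,0)$, and to observe that the $(1-c)$-term dies identically there. So the first step is to set $\bfu=(u,0,0)$ and $\bfv=(-v,0,0)$ and compute $\bfB = u\,d\bfv - v\,d\bfu$: since $\bfu$ and $\bfv$ are everywhere proportional along this locus (both lie on the $\u_1$-axis), $\bfB$ is a multiple of $(1,0,0)$, and the triple product $\{\bfB,\bfu,\bfv\}$ vanishes because it is the determinant of three vectors all lying in the span of $(1,0,0)$. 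Likewise $\bfA_-=v\bfu+u\bfv = (vu-uv,0,0)=\mathbf0$, so $\Ga_- = (uv)^{-1}\bfA_-\cd\bfB = 0$. (One should note that $\aa_- = uv-\cuv = uv-(-uv)=2uv\neq0$ for $u,v>0$, so the apparent singularities $1/\aa_-$ are harmless and the $(1-c)$-bracket is genuinely $0$, not $0/0$.) Hence the restriction of $\gg_c$ equals the restriction of $\gg_1$, which is the first assertion.

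Next I would compute the restriction of $\gg_1$ using Theorem \ref{g1}:
\[
\gg_1 = \tfrac12 dR^2 + \tfrac12 R\big(u^{-1}|d\bfu|^2 + v^{-1}|d\bfv|^2\big).
\]
Along the locus, $|d\bfu|^2 = du^2$, $|d\bfv|^2 = dv^2$ (the other coordinates are frozen at $0$, so their differentials vanish on the surface), and $R = u+v$, so $dR = du+dv$ and $dR^2 = du^2 + 2\,du\,dv + dv^2$. Substituting,
\[
\gg_1\big|_{\sF_-} = \tfrac12(du^2+2du\,dv+dv^2) + \tfrac12(u+v)\big(u^{-1}du^2 + v^{-1}dv^2\big),
\]
and collecting the $du^2$, $du\,dv$, $dv^2$ coefficients gives $\tfrac12 + \tfrac12(1 + v/u) = 1 + v/(2u)$, the cross coefficient $1$, and $\tfrac12 + \tfrac12(1+u/v) = 1 + u/(2v)$, matching the stated first fundamental form.

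Finally, for the Gaussian curvature I would just apply Brioschi's formula (or pass to an orthogonal/conformal coordinate) to the metric $E\,du^2 + 2F\,du\,dv + G\,dv^2$ with $E = 1 + v/(2u)$, $F = 1$, $G = 1 + u/(2v)$. The determinant is $EG - F^2 = (1+v/(2u))(1+u/(2v)) - 1 = \tfrac12(u/v + v/u) + \tfrac14 - \tfrac34 = \tfrac12(u/v+v/u) - \tfrac12$, and one then grinds the second-derivative combination; alternatively, and more cleanly, I expect the metric to be flat because it is (the restriction of) the pushdown of a flat-looking piece — indeed Corollary \ref{van} already tells us $F_c$ vanishes on $\{\bfu=-\bfv\}\subset\sF_-$, and on such a totally-real-axis slice one can look for explicit coordinates $(\xi,\eta)$ in which the metric becomes $d\xi^2 + d\eta^2$. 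The cleanest route: try $\xi = u + v$ together with a second function and check $K=0$; or simply note the metric equals $\tfrac12 dR^2 + \tfrac12 R(u^{-1}du^2 + v^{-1}dv^2)$ and substitute $u = \rho^2$, $v = \tau^2$ so $u^{-1}du^2 = 4\,d\rho^2$, $v^{-1}dv^2 = 4\,d\tau^2$, $R = \rho^2+\tau^2$, reducing it to $\tfrac12 d(\rho^2+\tau^2)^2 + 2(\rho^2+\tau^2)(d\rho^2+d\tau^2)$ — a rotationally symmetric metric in the $(\rho,\tau)$-plane of the form $f(r)(d\rho^2+d\tau^2)$ plus an exact-differential square, whose flatness is then a one-line check in polar coordinates $r^2 = \rho^2+\tau^2$. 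The main obstacle is purely bookkeeping: confirming that the change of variables collapses everything to a standard flat form without sign or factor errors; there is no conceptual difficulty, since the vanishing of the $(1-c)$-correction is immediate from the proportionality of $\bfu$ and $\bfv$.
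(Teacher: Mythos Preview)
Your argument has a sign slip that actually matters. By the convention \eqref{a+-}, $a_\pm = uv \mp \bfu\cdot\bfv$, so on the negative quadrant (where $\bfu\cdot\bfv = -uv$) one finds $a_- = uv + \bfu\cdot\bfv = 0$, not $2uv$; it is $a_+$ that equals $2uv$ there. Thus the formula of Lemma~\ref{gc} is genuinely of type $0/0$ on this locus --- indeed $\sF_-$ is precisely the vanishing set of $a_-$ --- and you cannot simply read off that the $(1-c)$-bracket is zero. The paper avoids this by lifting to $\R^8$: on the slice $\z_1=\z_2=0$ the pullbacks of $\al_2,\al_3$ vanish, so $\hh_c$, $N_c$ and $\Th_c$ are all visibly independent of $c$ by Propositions~\ref{hc} and~\ref{th}, and hence so is $\gg_c$. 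Your route via Lemma~\ref{gc} can in principle be salvaged by a careful limiting argument (the tensor $\gg_c-\gg_1$ is smooth, and one can check that its restriction to tangent directions of the surface tends to zero), but it is not the one-liner you present.

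The computation of the first fundamental form from Theorem~\ref{g1} is correct and matches the paper. For flatness, your $(\rho,\tau)$ substitution does work: in polar coordinates it yields $ds^2+2s^2\,d\psi^2$ with $s=\rho^2+\tau^2=R$, a flat cone. (The Brioschi aside contains an arithmetic error --- $EG-F^2=\tfrac12(u/v+v/u)+\tfrac14$, not what you wrote --- but you abandon that route anyway.) The paper instead substitutes $u=R\cos^2(\phi/2)$, $v=R\sin^2(\phi/2)$ to obtain $dR^2+\tfrac12R^2\,d\phi^2$ directly; this has the side benefit of feeding into the conical parametrisation \eqref{wgc} used throughout Section~\ref{met}.
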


\begin{proof}
This result may be regarded as a corollary of Propositions \ref{hc} and
\ref{th}. Up to the action of $SO(2),$ we can realize the subset
$\u_2=\u_3=\v_2=\v_3=0$ of $\sM$ by taking 
\[ \x_2=\x_3=\x_4=\x_5=0,\qbox{i.e.}\z_1=\z_2=0,\]
so that
\[ u=x_0^2+x_1^2=|\z_0|^2,\quad v={\gr x_6^2+x_7^2}=|\z_3|^2.\]
Moreover, $\al_2=\al_3=0$ so $\mu_2=\mu_3=0,$ and $\mu_1=u-v$. Thus $\hh_c$ and
$\Th_c$ are independent of $c$. {\gr Proposition \ref{th} implies that}
$N_c=\hh_c(X,X)$ is independent of $c$ on $\sF_-$. The first fundamental form
can now be read off from Theorem \ref{g1}: $\gg_1$ restricts to
\[ ds^2 = \half(du+dv)^2 + {\gr\half R}(u^{-1}du^2+v^{-1}dv^2),\]
which simplifies to that stated.

To prove that this metric has zero Gaussian curvature where defined,
we use the substitution
\be{Rcs}\left\{\ba{l} 
u = R\cos^2(\phi/2)\y v = R\sin^2(\phi/2), \ea\right.\ee
with $0\le\phi\le\pi$. Then
\be{dudv}\left\{\ba{l} 
du = \cos^2(\phi/2)dR - R\cos(\phi/2)\sin(\phi/2)d\phi\y 
dv = \sin^2(\phi/2)dR + R\cos(\phi/2)\sin(\phi/2)d\phi.
\ea\right.\ee 
A straightforward calculation reveals that
\be{rev} ds^2 = dR^2+\half R^2d\phi^2,\ee
which is the metric on a double cone in $\R^3$ with half
angle $\pi/4,$ and certainly flat.
\end{proof}

\begin{remark}\label{super}\rm
Because of the $\3$ invariance, $\gg_c$ will have an identical nature
on any negative `diagonally linear' quadrant in $\R^6,$ and one can
also switch the minus sign from $\v_1$ to $\u_1$. It follows from the
proof of Proposition \ref{1ff} that such a quadrant lies in the
projection of the cone over a complex projective line $\CP^1\subset
Z_-$ tangent to the horizontal distribution $D$ (see Corollary
\ref{hor}), and can therefore said to be \emph{superminimal}
\cite{Br3}. The range of the angle $\phi$ in the proof was restricted
to $(0,\pi),$ but can be extended to $\R/(2\pi\Z)$ to include the
image of $j(\CP^1)$. It also follows from the proof of the proposition
that $\gg_1$ takes an identical form in the positive quadrant
\[ \{\buv=(u,0,0;\,v,0,0):u,v>0\}\subset\sF_+, \]
though $\gg_c$ will have a slightly different (albeit, flat) form if
$c\ne1$.
\end{remark}

We shall use the substitution \eqref{Rcs} throughout this section, in
order to revert to a conical description
\be{wgc} \gg_c = dR^2 + R^2\wg_c\ee
of the induced metrics on $\sM,$ reflecting their origin in $\sC$.
There are three justifications for squaring the trigonometric
functions: (i) it ensures that $R=u+v$ is as before, (ii) it amounts
to using polar coordinates for the complex moduli $|\z_i|$ in $\sC,$
and (iii) it simplifies the form of $\gg_c$ in subsequent
statements. The choice of the half-angle is less significant.

In addition, we set
\[\left\{\ba{rcl}
 \bfu &=& u\,\bfs,\y
 \bfv &=& v\,\bft,
\ea\right.\]
so that $\bfs,\bft\in S^2,$ and $|d\bfs|^2=d\bfs\cd d\bfs$ and 
$|d\bft|^2=d\bft\cd d\bft$ denote the round metrics. Then
\[\left\{\ba{rcl}
 d\bfu &=& \bfs\,du+u\,d\bfs,\y
 d\bfv &=& \bft\,dv+v\,d\bft.
\ea\right.\]
{\gr We also set $\bfs\cd\bft=\cos2\th,$ so that the angle between $\bfu$ and
$\bfv$ equals $2\th$.}

Using the methods of Section \ref{T2}, one can show that $\wg_c =
\wg_c(\phi,\gr\bfs,\bft)$ extends to a smooth bilinear form on $\R^6,$ though we
shall restrict our discussion to the cases $c=1$ and $c=2$. Lemma \ref{gc}
suggests that the second case faithfully reflects the behaviour of $\gg_c$ for
all values of the parameter $c$ with $1<c<3$. In an attempt to identify the two
2-spheres in $\R^3\cup\R^3$ (and thereby bypass singularities), we shall first
describe the restriction of $\gg_1$ and $\gg_2$ to the subvarieties \be{coord}
\sF_\pm\ \cong\ \R^+\times[0,\pi]\times S^2.\ee of Definition \ref{align}. The
bijection \eqref{coord} is determined by the coordinates $(R,\phi,\bfs)$.

Theorem \ref{g1} implies that $\gg_1$ is the sum of $\half R$ times
$u|d\bfs|^2+v|d\bft|^2$ and the first fundamental form \eqref{rev}, before
restricting to $\sF_\pm,$. It follows that
\be{g1bis} {\gr 2\,\wg_1 =} \cos^2(\phi/2)\big|d\bfs\big|^2 +
\sin^2(\phi/2)\big|d\bft\big|^2 + d\phi^2.\ee
{\gr Since $\sF_\pm$ corresponds to setting $\bft=\pm\bfs,$} the restriction of
$\gg_1$ to \emph{both} $\sF_+$ and $\sF_-$ equals
\[    dR^2 + \half R^2(|d\bfs|^2 + d\phi^2).\]
By contrast,

\begin{corollary}\label{sF+-}
The restriction of $\gg_2$ to $\sF_\pm$ equals $dR^2+R^2\wg_2$ where
\[\wg_2 = \left\{\ba{ll} 
\half|d\bfs|^2+\qart d\phi^2 & \hbox{on } \sF_+\yy
\frac18(3+\cos2\phi)|d\bfs|^2+\half d\phi^2 &
\hbox{on } \sF_-.\ea\right.\]
\end{corollary}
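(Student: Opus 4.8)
The plan is to specialize Theorem~\ref{g2} to the two submanifolds $\sF_\pm$, using the coordinates $(R,\phi,\bfs)$ together with the substitution \eqref{Rcs} and the decompositions $d\bfu=\bfs\,du+u\,d\bfs$, $d\bfv=\bft\,dv+v\,d\bft$, where $\bft=\pm\bfs$ on $\sF_\pm$. First I would record what each ingredient of the formula in Theorem~\ref{g2} becomes on $\sF_\pm$. On $\sF_+$ (where $\bfu,\bfv$ are parallel and aligned) we have $\bfA_+=\bf0$ and $\cuv=uv$, so $N=N_2=6uv-2uv=4uv$ and $\aa_+=0$, $\aa_-=2uv$; on $\sF_-$ (anti-aligned) $\bfA_-=\bf0$, $\cuv=-uv$, so $N=6uv+2uv=8uv$, $\aa_+=2uv$, $\aa_-=0$. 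The 1-form $\Ga_\pm$ vanishes on $\sF_\pm$ by its very definition \eqref{Ga+-}, which kills one of the last two terms of $\gg_2$ on each of $\sF_+$ and $\sF_-$ respectively. So the only real work is to evaluate $|\bfB|^2$ and the surviving $\Ga_\mp^2$ term when restricted, expressed in $(R,\phi,\bfs)$.

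Next I would compute $\bfB=u\,d\bfv-v\,d\bfu$ on $\sF_\pm$. Writing $\bft=\pm\bfs$, we get $d\bfv=\pm(\bfs\,dv+v\,d\bfs)$, hence on $\sF_\pm$
\[ \bfB = \pm u(\bfs\,dv+v\,d\bfs) - v(\bfs\,du+u\,d\bfs)
        = (\pm u\,dv - v\,du)\,\bfs + uv(\pm1-1)\,d\bfs.\]
On $\sF_+$ the $d\bfs$ term drops out, so $\bfB=(u\,dv-v\,du)\,\bfs$ and, since $\bfs\cd d\bfs=0$ and $|\bfs|=1$, $|\bfB|^2=(u\,dv-v\,du)^2$. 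Also on $\sF_+$ the remaining term is $-\frac1{4N}\Ga_-^2$; from Lemma~\ref{Ga}, $\Ga_-=u\,dv-v\,du+\bfu\cd d\bfv-\bfv\cd d\bfu$, and with $\bft=\bfs$ one finds $\bfu\cd d\bfv=\bfv\cd(\bfs\,dv+v\,d\bfs)$ reduces (using $\bfu=u\bfs$, $\bfv=v\bfs$, $\bfs\cd d\bfs=0$) to $u\,dv$ and similarly $\bfv\cd d\bfu=v\,du$, so $\Ga_-=2(u\,dv-v\,du)$ on $\sF_+$. On $\sF_-$, by contrast, $\bft=-\bfs$ gives $\bfu\cd d\bfv$ and $\bfv\cd d\bfu$ computations that produce $\Ga_+=-d\aa_+=-d(uv-\cuv)=-d(2uv)$, while $\bfB=-(u\,dv+v\,du)\,\bfs-2uv\,d\bfs$, so $|\bfB|^2=(u\,dv+v\,du)^2+4u^2v^2|d\bfs|^2$. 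Assembling Theorem~\ref{g2} with these substitutions, and then applying \eqref{Rcs}–\eqref{dudv} to turn $u\,dv\pm v\,du$ and $du\,dv$, $du^2$, $dv^2$ into expressions in $dR$ and $d\phi$ (note $u\,dv-v\,du=uv\,d\phi/(\cos^2\!(\phi/2))\cdot\ldots$ type identities, most cleanly $u\,dv-v\,du = -R^2\cos(\phi/2)\sin(\phi/2)\,d\phi$ and $u\,dv+v\,du = \frac12 R\,dR - R^2\cos(\phi/2)\sin(\phi/2)\cos\phi\,d\phi$ plus the $\half|d\bfu+d\bfv|^2$ piece) should collapse everything to $dR^2+R^2\wg_2$ with the claimed $\wg_2$.

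The cleanest route for the $\half|d\bfu+d\bfv|^2$ term is to note $d\bfu+d\bfv=\bfs(du\pm dv)+(u\pm v)\,d\bfs$ on $\sF_\pm$; on $\sF_+$ this is $\bfs\,dR+R\,d\bfs$ so $\half|d\bfu+d\bfv|^2=\half dR^2+\half R^2|d\bfs|^2$, which already contributes the $\half|d\bfs|^2$ appearing in the $\sF_+$ answer and the full $dR^2$ once combined with $\half dR^2$ from elsewhere; on $\sF_-$ it is $\bfs(du-dv)+(u-v)\,d\bfs$, and using \eqref{dudv} one has $du-dv=\cos\phi\,dR-R\sin\phi\,d\phi$ while $u-v=R\cos\phi$, giving manageable trigonometric coefficients. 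I would then just add the $\half dR^2$, the $\frac2N|\bfB|^2$, and the single surviving $\Ga$-term, simplify using $\cos^2(\phi/2)\sin^2(\phi/2)=\tfrac14\sin^2\phi$ and $\cos\phi=1-2\sin^2(\phi/2)$, and read off the stated $\wg_2$.

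I expect the main obstacle to be purely bookkeeping on $\sF_-$: there the anti-alignment means $d\bfu+d\bfv$, $\bfB$, and $\Ga_+$ all carry genuine $d\bfs$ components and genuine $d\phi$ cross-terms, and one must verify that the $dR\,d\phi$ cross-terms cancel and that the coefficient of $|d\bfs|^2$ assembles to exactly $\frac18(3+\cos2\phi)$ rather than something merely proportional. This is where a sign error in Lemma~\ref{Ga} or in the \eqref{dudv} differentials would show up, so I would double-check the $\sF_-$ computation against the known fact (Corollary~\ref{van}) that $F_2$ vanishes on $\{\bfu=-\bfv\}\subset\sF_-$, and against the baseline $\gg_1$ restriction \eqref{g1bis}, which on $\sF_-$ gives $dR^2+\half R^2(|d\bfs|^2+d\phi^2)$; subtracting, the difference $\gg_1-\gg_2$ on $\sF_-$ must be $R^2\bigl((\tfrac12-\tfrac18(3+\cos2\phi))|d\bfs|^2+(\tfrac12-\tfrac12)d\phi^2\bigr)=R^2\tfrac18(1-\cos2\phi)|d\bfs|^2=\tfrac14 R^2\sin^2\phi\,|d\bfs|^2$, a clean consistency check against the $|\bfB|^2$ term $4u^2v^2|d\bfs|^2=R^4\sin^4\phi\,|d\bfs|^2$ divided by the relevant multiples of $N=8uv=2R^2\sin^2\phi$.
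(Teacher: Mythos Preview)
Your proposal is correct and follows essentially the same route as the paper's own proof: specialize each term of Theorem~\ref{g2} on $\sF_\pm$ using $\bft=\pm\bfs$, note that $\Ga_\pm$ vanishes on $\sF_\pm$ and compute $N$, $|d\bfu+d\bfv|^2$, $|\bfB|^2$, and the surviving $\Ga_\mp^2$ exactly as you describe, then pass to $(R,\phi,\bfs)$ via \eqref{Rcs}--\eqref{dudv}. One small slip: from \eqref{dudv} one gets $u\,dv-v\,du = +R^2\cos(\phi/2)\sin(\phi/2)\,d\phi$ (your sign is reversed), but this is squared in the formula so it does not affect the outcome; your $\sF_-$ consistency check against $\gg_1$ is a nice addition not in the paper.
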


\begin{proof}
We shall examine the various terms for $\gg_2$ in Theorem \ref{g2}. With
  the assumption that $\buv\in\sF_\pm,$ we can write $\bfu=u\bfs$ and
  $\bfv=\pm\bfs,$ and
\[\ba{rcl} |d\bfu+d\bfv|^2 
&=& \big|(du\pm dv)\bfs+(u\pm v)d\bfs\big|^2\yy
&=& \left\{\ba{ll} 
dR^2+R^2|d\bfs|^2 &\hbox{on }\sF_+\y
(du-dv)^2 + (u-v)^2|d\bfs|^2&\hbox{on }\sF_-.
\ea\right.\ea\]
Moreover,
\[\ba{rcl} |\bfB|^2 
&=& \big|\pm u(dv\,\bfs+v\,d\bfs)-v(du\,\bfs+u\,d\bfs)\big|^2\yy
&=& \left\{\ba{ll}
(u\,dv-v\,du)^2 = R^2uv\,d\phi^2 &\hbox{on }\sF_+\y
(u\,dv+v\,du)^2+4u^2v^2|d\bfs|^2 &\hbox{on }\sF_-.\ea\right.
\ea\]
Now $N=N_2=6uv-2\cuv$ simplifies to $4uv$ on $\sF_+$ and $8uv$ on $\sF_-$.
{\gr Definition \ref{align} and \eqref{Ga+-} imply that $\Ga_+=0$ on $\sF_+$
  and $\Ga_-=0$ on $\sF_-$; thus}
\[\gr\ba{rcl} \Ga_+^2
&=& \left\{\ba{ll}
0 &\hbox{on }\sF_+\\
4(u\,dv+v\,du)^2 &\hbox{on }\sF_-,\ea\right.\\[13pt]
  \Ga_-^2
&=& \left\{\ba{ll}
4(u\,dv-v\,du)^2 &\hbox{on }\sF_+\\
0 &\hbox{on }\sF_-.\ea\right.\ea\]
The proof is completed by adding up the various terms.

The case of $\sF_+$ is easiest, and can also be deduced from Lemma
\ref{gc}. {\gr For $\sF_-,$ the coefficient of $|d\bfs|^2$ in $\gg_2$ equals
\[\ts \half(u^2+v^2) = \frac18(3+\cos2\phi).\]
The terms involving $|\bfB|^2$ and $|\Ga_+|^2$ also contribute
\[ \frac1{2uv}(u\,dv+v\,du)^2 = \half R^2d\phi^2+2du\,dv,\]
and the last term above converts $\half(du-dv)^2$ into the missing $\half
dR^2$.}
\end{proof}

\begin{remark}\rm
The restriction of $\gg_1$ to $\sF_\pm$ is invariant by diagonal translation in
the $(u,v)$ plane defined by rotating the angle $\phi$. This action is not
however an isometry for $\gg_2|_{\sF_-}$.  Restricted to four dimensions,
neither metric degenerates where only one of $u,v$ is zero. Equation
\eqref{g1bis} shows that $\gg_1$ is singular on each locus $\{u=0\}$ and
$\{v=0\},$ since the respective 2-sphere is shrunk to a point. The same is true
for $\gg_2$ because
\be{sing} \ba{rcl}
\lim\limits_{\gr u\to0}\wg_2 &=& |d\bfs|^2+\qart(3-\cos2\th)d\phi^2,\yy
\lim\limits_{\gr v\to0}\wg_2 &=& |d\bft|^2+\qart(3-\cos2\th)d\phi^2,
\ea\ee
limits that are verified using Theorem \ref{g2}. Although $\gg_1$ has little to
do with $\2$ holonomy, it \emph{is} associated to a $\2$ structure with closed
3-form that arises as the $\1$ quotient of the flat $\Spin(7)$-structure on
$\R^8$ \cite{Fow}.
\end{remark}

The 4-dimensional subvarieties $\sF_\pm$ of $\sM$ are distinguished by
their $\3$-invariance. Neither can be $\JJ$-holomorphic; this follows
from

\begin{lemma}\label{S2uv}
Fix $\sF_+$ or $\sF_-$. Each 2-sphere $S^2_{u,v}\subset\sF_\pm$ defined by
setting $u$ and $v$ equal to positive constants is totally real in $\sM$;
indeed $\JJ(TS^2_{u,v})$ is {\gr $\gg_2$-orthogonal} to $T\sF_\pm$.
\end{lemma}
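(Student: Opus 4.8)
The plan is to exploit the explicit description of the symplectic form from Theorem~\ref{si} together with the explicit metric formula from Theorem~\ref{g2}, and to work on the $\3$-invariant locus $\sF_\pm$ where $\bfv = \pm\bfs$ with $\bfu = u\bfs$. On $S^2_{u,v}$ the constraints $u,v$ constant kill the radial differentials, so the tangent space $TS^2_{u,v}$ is spanned by vectors $\E$ with $\E\ip du = \E\ip dv = 0$, equivalently $\E$ annihilates $du_i$ in the radial direction and is tangent to the round sphere swept by $\bfs$. First I would choose a convenient pair of tangent vectors $\E_1,\E_2$ spanning $T_m S^2_{u,v}$: on $\sF_\pm$ these are of the form $\E = u\,\pd_{\bfs}^{(a)} \pm v\,\pd_{\bft}^{(a)}$, i.e.\ the generators of the diagonal $\3$ rotation (plus its $\pm$-twisted analogue), which are exactly the vectors annihilating $\bfB$ and hence having zero contraction with $\Ga_+$ and $\Ga_-$ — just as in the proof of Proposition~\ref{10} and Theorem~\ref{Jholo}.

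Next I would compute $\E^\flat = \E\ip\gg_2$ using Theorem~\ref{g2}. Because $\E\ip dR = 0$, $\E\ip\bfB$-terms vanish, and $\E\ip\Ga_\pm = 0$, only the term $\tfrac12|d\bfu+d\bfv|^2$ survives, so $\E^\flat$ is (a multiple of) $(d\bfu+d\bfv)$ contracted with $\E$, which on $\sF_\pm$ lives in the span of $\{d\bfs\}$, i.e.\ in $T^*S^2_{u,v}$ itself — so $\gg_2$ restricted to $S^2_{u,v}$ is just a round metric and $\E^\flat$ annihilates the $R$-, $\phi$- and complementary-sphere directions. Then I would compute $\E^\natural = \E\ip\si$ from Theorem~\ref{si}: writing $\si = -\tfrac12\sum dp_i\we dq_i$ with $\bfp = \bfu+\bfv$, $\bfq = R(\bfu-\bfv)$, one contracts $\E$ against this. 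On $\sF_+$ we have $\bfq = R(u-v)\bfs$ so $dq_i$ contributes both an $\bfs$-part and a $dR$-part; the key point to check is that $\E^\natural$ has \emph{no} component along $T^*S^2_{u,v}$, i.e.\ $\E^\natural(\E') = \si(\E,\E') = 0$ for all $\E'\in TS^2_{u,v}$ — this says $S^2_{u,v}$ is isotropic for $\si$, hence (being 2-dimensional in a 6-manifold, with $\si$ nondegenerate) totally real, since a $\JJ$-complex 2-plane would be symplectic. From $\JJ\E^\flat = N^{-1/2}\E^\natural$ (equation \eqref{nat}) and the fact that $\E^\natural$ lies in the span of $\{dR, d\phi, \text{the complementary }d\bfs'\text{-directions}\}$ which is exactly the $\gg_2$-annihilator complement of $T\sF_\pm$ intersected appropriately — more precisely, $\E^\natural$ lies in the $\gg_2$-orthogonal complement of $T S^2_{u,v}$ inside $T\sF_\pm{}^{\perp}\!$... — one reads off that $\JJ(TS^2_{u,v})\perp_{\gg_2} T\sF_\pm$.

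Concretely, the cleanest route for the second assertion is: show $\si(\E, W) = 0$ for every $W\in T\sF_\pm$ \emph{except} that part of $T\sF_\pm$ lying in... no — rather, show that $\si(\E,\cdot)$, viewed via $N^{-1/2}\gg_2(\JJ\E,\cdot)$, vanishes on all of $T\sF_\pm$. So I would pick a spanning set $\{W_1,\dots,W_4\}$ of $T\sF_\pm$ (the two sphere-tangent vectors $\E_1,\E_2$ themselves, plus $\pd_R$ and $\pd_\phi$) and verify $\si(\E_a, W_b) = 0$ for all $b$: the $\si(\E_a,\E_b)=0$ cases are the isotropy statement above; $\si(\E_a,\pd_R)$ and $\si(\E_a,\pd_\phi)$ vanish because, after substituting $\bfv=\pm\bfs$, the $dp_i$ and $dq_i$ all reduce to combinations of $\{dR, d\phi, d\bfs\}$ with $\E_a$ pairing only against the $d\bfs$ slots and $\pd_R,\pd_\phi$ pairing only against $dR,d\phi$, so each term $dp_i\we dq_i$ gives a product with a repeated or orthogonal index — a short bookkeeping check using $\bfs\cd d\bfs = 0$ on $S^2$ and $\bfs\cd\pd_{\bfs}^{(a)} = 0$. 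Having shown $\si(\E,W)=0$ for all $W\in T\sF_\pm$, equation \eqref{nat} gives $\gg_2(\JJ\E, W) = 0$ for all such $W$, i.e.\ $\JJ\E \perp_{\gg_2} T\sF_\pm$, and since $\E$ spans $TS^2_{u,v}$ this is exactly the claim; total reality then follows since $\JJ(TS^2_{u,v})$ meets $T S^2_{u,v}\subset T\sF_\pm$ trivially.

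The main obstacle I anticipate is purely organizational rather than conceptual: correctly evaluating $\bfp = \bfu+\bfv$ and $\bfq = R(\bfu-\bfv)$ and their differentials on $\sF_+$ versus $\sF_-$ (where $\bfu - \bfv$ is $(u-v)\bfs$ versus $(u+v)\bfs = R\bfs$ respectively), keeping track of which terms are radial and which are spherical, and making sure the $\pm$ signs propagate consistently — a slip here would spuriously produce a nonzero $\si(\E_a,\E_b)$. A secondary point needing care: the statement $\JJ(TS^2_{u,v})\perp T\sF_\pm$ is strictly stronger than $S^2_{u,v}$ being merely totally real in $\sM$, so I must verify the pairing against \emph{all four} directions of $T\sF_\pm$, not just against $TS^2_{u,v}$ and $T\sF_\pm{}^{\perp}$ separately; the $\pd_\phi$ direction is the one most likely to hide a nonzero term and deserves the most explicit check, using that $\pd_\phi$ acts as $\tfrac12(-v\,\pd_u + u\,\pd_v)$-type diagonal scaling so that $\pd_\phi\ip d\bfs = 0$.
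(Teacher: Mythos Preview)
Your approach is correct and shares the paper's key idea: show $\si(V,W)=0$ for all $V\in TS^2_{u,v}$ and $W\in T\sF_\pm$, then invoke \eqref{nat} (equivalently \eqref{siNg}) to conclude $\gg_2(\JJ V,W)=0$. The detour through $\E^\flat$ via Theorem~\ref{g2} is unnecessary, as you effectively recognize in your third paragraph.

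The paper's implementation is considerably more economical than your basis-pairing check. Rather than using the Darboux description of Theorem~\ref{si} and verifying $\si(\E_a,W_b)=0$ against a spanning set of $T\sF_\pm$, the paper computes the pullback of $\si$ to $\sF_\pm$ in one stroke from Lemma~\ref{mual}: on $\sF_+$ one has $\bfu\cdot d\bfv=u\,dv$ and $\bfv\cdot d\bfu=v\,du$ (and the opposite signs on $\sF_-$), so $R\,X\ip\tu_0$ reduces to a function of $u,v$ alone, and differentiating gives $\si|_{\sF_+}=R\,du\we dv$ and $\si|_{\sF_-}=-2R\,du\we dv$. From this the vanishing $\si(V,W)=0$ is immediate since $V\ip du=V\ip dv=0$. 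Your Darboux route reaches the same endpoint (and indeed your computation of $\sum dp_i\we dq_i$ on $\sF_\pm$, carried out using $\bfs\cdot\bfs=1$ and $\bfs\cdot d\bfs=0$, would reproduce exactly these formulae), but the paper sidesteps the $\bfp,\bfq$ bookkeeping entirely.
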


\begin{proof}
Let $V,W$ be vectors tangent to $\sF_\pm$ with $V$ tangent to one of
the 2-spheres $S^2_{u,v}$ at some point. {\gr Using Lemma \ref{mual},
pullback of $\si$ ro $\sF_\pm$ is given by}
\[\gr \si = \left\{\ba{rl}
R\,du\we dv &\hbox{on }\sF_+\yy
-2R\,du\we dv &\hbox{on }\sF_-.
\ea\right.\] 
Therefore
\[ {\gr\gg_2}(\JJ V,W)=\si(V,W)=0,\]
as asserted.
\end{proof}

From \eqref{coord}, we see that $\gr S^2_{u,v}$ parametrizes a family of
real surfaces inside $\sF_\pm$. Up to the $\3$ action, each is
equivalent to one of the quadrants of Proposition \ref{1ff} or
Remark \ref{super}. By Corollary \ref{sF+-}, the tangent spaces of the
leaf
\[ \{(u\bfs,\,\pm v\bfs):u,v>0\}\subset\sF_\pm\]
are {\gr $\gg_2$-orthogonal} to the distribution $\gr TS^2_{u,v}$ in
$T\sF_\pm,$ so Lemma \ref{S2uv} implies that the leaf is
$\JJ$-holomorphic. This is also a corollary of Theorem \ref{Jholo}, since the
leaf is the intersection of two of the $\JJ$-holomorphic surfaces.

We began this section by restricting $\gg_c$ to a 2-dimensional subspace $\R^2$
of $\R^6,$ and then extended that result to the 4-dimensional subvarieties
$\sF_+,\sF_-$. It is natural to consider too the subvarieties highlighted by
Definition \ref{sMn} and Theorem \ref{Jholo}. For definiteness, we shall take
$\bfn=\gr(1,0,0)$ this time, so that
\be{sM3} \sM(\bfn)=\{\buv\in\sM:\u_1=0=\v_1,\ uv\ne0\}.\ee
Since $\bfn$ will remain fixed for the remainder of this section, we shall
denote the subspace of $\R^6$ containing \eqref{sM3} merely by $\R^4$.

The choice of setting $\u_1,\v_1$ to zero (contrasting with that of
Proposition \ref{1ff}) is dictated by their expression as quadratic
forms diagonalized by $\x_0,\ldots,\x_7,$ so that
$|\z_0|=|\z_2|,$ and $|\z_1|=|\z_3|$.
This enables us to take
\[ [\bfz]=[\z_0,\z_1,\z_2,\z_3] = 
   [\la e^{i\al},\mu e^{i\beta},\la e^{i\ga},\mu e^{i\de}],\]
with $\la,\mu\ge0$ and $\la\mu\ne0$. (Square brackets again represent
the $\1_1$ quotient.) Then
\[\left\{\ba{rcl}
 \bfu &=& \big(0,\>u\cos(\th+\ch),-u\sin(\th+\ch)\big),\y
 \bfv &=& \big(0,\>v\cos(\th-\ch),\ \>v\sin(\th-\ch)\big),
\ea\right.\]
where $u=2\la^2,$ $v=2\mu^2$ and $\al-\ga=\th+\ch$ and
$\beta-\de=\th-\ch$. Since $\cuv=uv\cos2\th,$ the angle
between $\bfu$ and $\bfv$ is again $\th$ and is independent of $\ch$.

Using the coordinates $(R,\phi)$ and substituting expressions for
$\bfu,\bfv$ into $\gg_2$ yields

\begin{theorem}\label{R4}
The restriction of $\wg_2$ to $\R^4$ equals
\[\ba{l}
\half d\th^2+\frac18(3-\cos2\th)d\phi^2
+\frac1{16}(7+\cos2\th+2\sin^2\th\,\cos2\phi)d\ch^2\y
\hskip180pt 
+\cos\phi\,d\th\,d\ch-\qart\sin2\th\sin\phi\,d\phi\,d\ch.\ea\]
\end{theorem}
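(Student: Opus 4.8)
The plan is to substitute the parametrization of $\sM(\bfn)$ displayed just above the statement into Theorem \ref{g2} and collect terms. Since $\gg_2$ has the conical form \eqref{wgc} and $\wg_2$ carries no $dR$, it is enough to restrict everything to the link $\R^4\cap\{R=1\}$ — that is, to set $u=\cos^2(\phi/2)$, $v=\sin^2(\phi/2)$, so $du=-dv=-\half\sin\phi\,d\phi$ — and then read off the coefficients of $d\th^2,d\phi^2,d\ch^2,d\th\,d\phi,d\th\,d\ch,d\phi\,d\ch$. Writing $\bfu=u\bfs$, $\bfv=v\bft$ with $\bfs=(0,\cos(\th+\ch),-\sin(\th+\ch))$ and $\bft=(0,\cos(\th-\ch),\sin(\th-\ch))$, I would first record the elementary inner-product table $\bfs\cd\bft=\cos2\th$, $|d\bfs|^2=(d\th+d\ch)^2$, $|d\bft|^2=(d\th-d\ch)^2$, $d\bfs\cd d\bft=-\cos2\th\,(d\th^2-d\ch^2)$, $\bfs\cd d\bft=-\sin2\th\,(d\th-d\ch)$, $\bft\cd d\bfs=-\sin2\th\,(d\th+d\ch)$, together with the half-angle identities $u^2+v^2=\qart(3+\cos2\phi)$, $u^2-v^2=\cos\phi$, $uv=\qart\sin^2\phi$ and $u\,dv-v\,du=\half\sin\phi\,d\phi$ on $\{R=1\}$.

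The four nonconstant terms of Theorem \ref{g2} are then expanded using this table (the $\half dR^2$ term vanishes on the link). With $d\bfu+d\bfv=(\bft-\bfs)\,dv+u\,d\bfs+v\,d\bft$ one obtains for $\half|d\bfu+d\bfv|^2$ a $d\phi^2$-coefficient $\half\sin^2\th\sin^2\phi$, a quadratic part $\half[(u^2+v^2-2uv\cos2\th)d\th^2+(u^2+v^2+2uv\cos2\th)d\ch^2+2(u^2-v^2)d\th\,d\ch]$, and $d\phi$-cross terms $-\half\sin2\th\sin\phi(\cos\phi\,d\th+d\ch)$. With $\bfB=(u\bft+v\bfs)\,dv+uv(d\bft-d\bfs)$ and $N=N_2=2uv(3-\cos2\th)$ the factor $uv$ cancels, so $\tfrac2N|\bfB|^2=(3-\cos2\th)^{-1}[(u^2+v^2+2uv\cos2\th)d\phi^2+\sin^2\phi(\cos^2\th\,d\th^2+\sin^2\th\,d\ch^2)+\sin\phi\sin2\th(\cos\phi\,d\th+d\ch)d\phi]$. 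For the $\Ga$-terms I would quote Lemma \ref{Ga}: since $\aa_+=uv-\cuv=2uv\sin^2\th$, we have $\Ga_+=-d\aa_+=-\half\sin^2\th\sin2\phi\,d\phi-\half\sin^2\phi\sin2\th\,d\th$, which carries no $d\ch$, and expanding $\bfu\cd d\bfv-\bfv\cd d\bfu=\cos2\th\,(u\,dv-v\,du)+2uv\sin2\th\,d\ch$ gives $\Ga_-=2\cos^2\th(u\,dv-v\,du)+2uv\sin2\th\,d\ch=\cos^2\th\sin\phi\,d\phi+\half\sin^2\phi\sin2\th\,d\ch$, which carries no $d\th$. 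In particular both $\Ga_\pm$ already appear as manifestly smooth $1$-forms.

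Finally I would add the four pieces and simplify coefficient by coefficient, writing everything through $C=\cos2\th$ and $P=\cos2\phi$ so the cancellations are visible. The coefficients of $d\th^2$, $d\phi^2$ and $d\th\,d\phi$ a priori carry $\phi$-dependence (from $\half|d\bfu+d\bfv|^2$ and $\tfrac2N|\bfB|^2$), but it is killed against the factor $(3-C)^{-1}$ by the identities $1+\sin^2\th=2-\cos^2\th=\half(3-C)$, $\ \qart-\frac{1+C}{16}=\frac{3-C}{16}$ and $9-6C+C^2=(3-C)^2$, leaving the constant $\half$ for $d\th^2$, the function $\frac18(3-C)$ for $d\phi^2$, and $0$ for $d\th\,d\phi$. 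The $d\th\,d\ch$ coefficient comes solely from $2(u^2-v^2)=2\cos\phi$ inside $\half|d\bfu+d\bfv|^2$ (as $\Ga_+^2$ has no $d\ch$ and $\Ga_-^2$ no $d\th$), giving $\cos\phi$; the $d\phi\,d\ch$ terms from $\half|d\bfu+d\bfv|^2$, $\tfrac2N|\bfB|^2$ and $-\tfrac1{4N}\Ga_-^2$ sum to $-\qart\sin2\th\sin\phi$; and the $d\ch^2$ terms combine to $\frac1{16}(7+C+P-CP)=\frac1{16}(7+\cos2\th+2\sin^2\th\cos2\phi)$, using $P-CP=2\sin^2\th\,P$. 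This matches the stated $\wg_2$. The only real obstacle is the bulk of the trigonometry; the two conceptual shortcuts are the conical reduction \eqref{wgc} (so the whole computation happens on $\{R=1\}$) and Lemma \ref{Ga} (so the potentially singular $\Ga_\pm$ enter already in smooth form), after which the cancellations are routine and can be cross-checked numerically at a generic point. As a sanity check, the substitutions $\th=0$ and $\th=\half\pi$ recover the $\sF_+$ and $\sF_-$ restrictions of Corollary \ref{sF+-} respectively.
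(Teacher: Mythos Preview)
Your proposal is correct and follows exactly the route the paper indicates: the paper's ``proof'' is the single sentence ``Using the coordinates $(R,\phi)$ and substituting expressions for $\bfu,\bfv$ into $\gg_2$ yields'', i.e.\ direct substitution of the $(R,\phi,\th,\ch)$ parametrization into Theorem~\ref{g2}, and you have filled in precisely those details (with the conical reduction to $R=1$, the inner-product table for $\bfs,\bft$, and the use of Lemma~\ref{Ga} for $\Ga_\pm$). Your sanity checks against Corollary~\ref{sF+-} at $\th=0$ and $\th=\pi/2$ are also consistent.
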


\noindent{\gr Note that on the locus $\phi=0$ ($v=0$) or on the locus $\phi=\pi$
($u=0$), three of the terms above sum to a perfect square, and
\[\ts \wg_2 = \half(d\th\pm d\phi)^2+\frac18(3-\cos2\th)d\phi^2\]
is degenerate.}\smallbreak

We now define subsets
\[\ba{ccl} 
\sF_-'&=&\{(0,-u\sin\ch,-u\cos\ch;\,0,\ v\sin\ch,\ v\cos\ch)\}\y
\sF_1'&=&\{(0,\ u\cos\ch,\ -u\sin\ch;\ 0,\ v\cos\ch,-v\sin\ch)\},\ea\]
corresponding to to $\th=\pi/2$ and $\th=0,$ and 
\[\ba{ccl} 
\sF_2'&=&\{(0,-u\sin\th,-u\cos\th;\ 0,\ v\sin\th,-v\cos\th)\}\y
\sF_3'&=&\{(0,\ u\cos\th,\ -u\sin\th;\ 0,\ v\cos\th,\ v\sin\th)\},
\ea\] corresponding to $\ch=\pi/2$ and $\ch=0$. In all cases,
$u,v\ge0$ and $0\le\th,\ch\le2\pi$. It follows from the definition in
Proposition \ref{sFi} that
\[ \sF_-'=\sF_-\cap\R^4,\qbox{and} 
\sF_i'=\sF_i\cap\R^4\quad\forall i=1,2,3,\]
to which we can add that $\sF_1'=\sF_+\cap\R^4,$ since the equations
of $Z_+$ and $Z_1$ coincide when $|\z_1|=|\z_3|$. But there is also a
duality between the pairs $\{{\gr\sF_-'},\sF_1'\}$ and $\{\sF_2',\sF_3'\}$
that derives from Remark \ref{ambi}: changing the sign of $\v_3$ swaps
the subsets over, though this map is definitely not
$\3$-equivariant.

Observe that $\sF_2'=R_{23}(\sF_3')$ where $R_{23}\in SO(2)$ is rotation by
$\pi/2$ in the `2--3' plane. Any non-zero vector in $\R^6$ lies in the $\3$
orbit of a point of $\sF_2'$ or $\sF_3'$ (this orbit is a 2-sphere if $\th$ is
a multiple of $\pi/2$). Therefore both $\sF_2'$ and $\sF_3'$ are \emph{slices}
for the $\3$ action, corresponding to the invariants $u,v,\th$. These isometric
slices are characterized by an almost symmetric treatment of $\bfu$ and $\bfv,$
which leads to a simpler form of the induced metric below. The intersection
$\sF_2'\cap\sF_3'$ contains the union $\R^2\cup\R^2$ of the two 2-planes given
by $uv=0$.

Theorem \ref{R4} and the subsequent discussion yields

\begin{theorem}
An open subset of $\sM=\R^6\smz$ is foliated by 3-dimensional
submanifolds parametrized by $\3,$ each member of which is a cone over
a cylinder isometric to $\sF_2'$ (or $\sF_3'$) endowed with the metric
$dR^2+R^2\wg_2$ where
\[\ts \wg_2 = \half d\th^2+\frac18(3-\cos2\th)d\phi^2.\]
\end{theorem}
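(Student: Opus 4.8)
The plan is to obtain the theorem by combining facts already established: the diagonal $\3$ acts on $(\sM,\gg_2)$ by isometries (recorded at the start of Section \ref{met}); $\sF_2'$ and $\sF_3'$ are slices for this action, parametrized by the invariants $u,v,\th$ (the discussion after Theorem \ref{R4}); and Theorem \ref{R4} records $\wg_2$ on $\R^4$ in the coordinates $(R,\phi,\th,\ch)$. The skeleton is then: the $\3$-translates of the slice foliate an open set and are mutually isometric (being $\3$-translates of one another), while the metric on the slice is read off from Theorem \ref{R4} by freezing $\ch$.

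Concretely, I would first discard the locus $\sF_+\cup\sF_-$, i.e.\ work on $\sM^\circ=\sM\sm(\sF_+\cup\sF_-)$. There $\bfu,\bfv$ are linearly independent, so no nontrivial element of the diagonal $SO(3)$ fixes $\buv$; hence $\3$ acts freely on $\sM^\circ$ with $3$-dimensional orbits (on $\sF_+\cup\sF_-$ the orbit is only a $2$-sphere, by Section \ref{SO3}). In the coordinates of Theorem \ref{R4}, $\sM^\circ\cap\R^4$ is the region with $\th\notin\frac\pi2\Z$ and $0<\phi<\pi$; there $\sF_2'$ and $\sF_3'$ are the slices $\ch=\pi/2$ and $\ch=0$, each a $3$-dimensional submanifold transverse to the $\3$-orbits and, after restricting $\th$ to a fundamental interval so that $u,v,\th$ give genuine orbit representatives, meeting each orbit once. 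The action map $\3\times\sF_2'\to\sM$ is then a diffeomorphism onto an open set $\sM'$, the images $g\cd\sF_2'$ ($g\in\3$) foliate $\sM'$ by $3$-manifolds parametrized by $\3$, and each is isometric to $\sF_2'$ since $\3$ acts isometrically; the same holds with $\sF_3'$, and $\sF_2'\cong\sF_3'$ because $\sF_2'=R_{23}(\sF_3')$ lies in one $\3$-orbit. Finally, as $\sF_2'$ is cut out in $\R^4$ by $\ch=\pi/2$, its first fundamental form is $dR^2$ plus the expression of Theorem \ref{R4} with $d\ch$ set to $0$: every $d\ch$-term vanishes, leaving
\[ dR^2 + R^2\Big(\half\,d\th^2 + \frac18(3-\cos2\th)\,d\phi^2\Big) = dR^2+R^2\wg_2 \]
(with $R=u+v$ and $\phi$ as in \eqref{Rcs}), and identically with $\ch=0$ on $\sF_3'$. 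Since $\wg_2$ does not involve $R$, this realizes $\sF_2'$ — hence every leaf — as a metric cone with radial coordinate $R$ over its link $\{R=1\}\cap\sF_2'$ carrying $\wg_2$, a surface on which $\th$ is a circular coordinate and $\phi$ an interval coordinate, i.e.\ a cylinder.

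The one step that is more than routine is making the slice/foliation precise: checking that $\sF_2'$ is genuinely transverse to the $\3$-orbits on $\sM^\circ$ and that, on a suitable open subset, it is a \emph{single-valued} cross-section, so that the action map is a (local) diffeomorphism and the translates $g\cd\sF_2'$ assemble into an honest foliation rather than a singular partition — this is what forces the restriction on the range of $\th$ and the attendant care with the word ``cylinder''. All of this is implicit in calling $\sF_2',\sF_3'$ ``slices for the $\3$ action, corresponding to the invariants $u,v,\th$'' in the discussion after Theorem \ref{R4}; granting that, the remainder is the bookkeeping above.
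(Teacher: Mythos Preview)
Your proposal is correct and follows essentially the same route as the paper, which offers no separate proof environment but simply asserts that the theorem follows from ``Theorem \ref{R4} and the subsequent discussion'': you set $d\ch=0$ in the formula of Theorem \ref{R4} to read off $\wg_2$ on the slice $\sF_2'$ (or $\sF_3'$), and then use that these are $\3$-slices with $\3$ acting isometrically to obtain the foliation by isometric translates. Your version is in fact more explicit than the paper's about the transversality/cross-section issues needed to justify the word ``foliation'', but the underlying argument is the same.
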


\vs

\begin{remark}\rm
Since the quadrics $Z_1,Z_2,Z_3$ are equivalent under $SU(2),$ their
images in $\sM$ are congruent under $\3$ and thus all isometric. What
is more surprising is that $\sF_2$ embeds isometrically into $\sF_-$
after a change of coordinates $\th\mapsto\phi+\frac\pi2$. At first
sight this appears to contradict Corollary \ref{sF+-}, given that
$\sF_1$ coincides with the \emph{positive} space $\sF_+$ over
$\R^4$. The problem is that, although $\sF_1=R_{13}(\sF_3),$ this
relation no longer holds if we apply primes.
\end{remark}

Let us try to visualize $\sF_2'$ and $\sF_3'$. Each is a cone over a
cylinder $S^1\times[0,\pi],$ with $\th\in S^1$ and $\phi\in[0,\pi]$.
If one sets
\[\ts f(\th)=\frac12\sqrt{3-\cos2\th},\]
then $2\wg_2$ is the first fundamental form of the surface of
revolution in $\R^3$ parametrized by
\[ \big(f(\th)\cos\phi,\>f(\th)\sin\phi,\>g(\th)\big).\] 
Here $f'(\th)^2+g'(\th)^2=1,$ so that the profile curve
$(f(\th),g(\th))$ has unit speed. Its Gaussian curvature
\[ K= -\frac{f''(\th)}{f(\th)}= 1-\frac8{(3-\cos2\th)^2}\]
varies between $-1$ and $\frac12$.

Figure 3 represents $(\sF_2'\cup\sF_3')/\R^+$ topologically as a 2-torus,
obtained by identifying the outer boundaries of the curvilinear rectangle in
the usual way. We have chosen to represent $\gr\sF_3'$ by the blue patch, whose
boundaries correspond to $\phi=\pi,0$ (meridians left and right forming the
intersection $\sF_2'\cap\sF_3'$) and to $\th=0,2\pi$ (semicircles bottom and
top that are identified to form the cylinder). Attaching $\sF_2'$ (the yellow
patch) to $\sF_3'$ requires a vertical jump $\th\mapsto\th+\pi/2$
(corresponding to $L\in SO(2)$); this is shown schematically for $u=0$ but the
combination is not smooth (the parallels are not $C^2$ in $\phi$). The isometry
$\hj$ of Lemma~\ref{j} acts by interchanging $\th\leftrightarrow2\pi\-\th$ and
$\phi\leftrightarrow\pi\-\phi,$ and therefore flips the upper and lower halves
of each coloured surface.

\begin{center}
\vspace{15pt}
\scalebox{.25}{\includegraphics{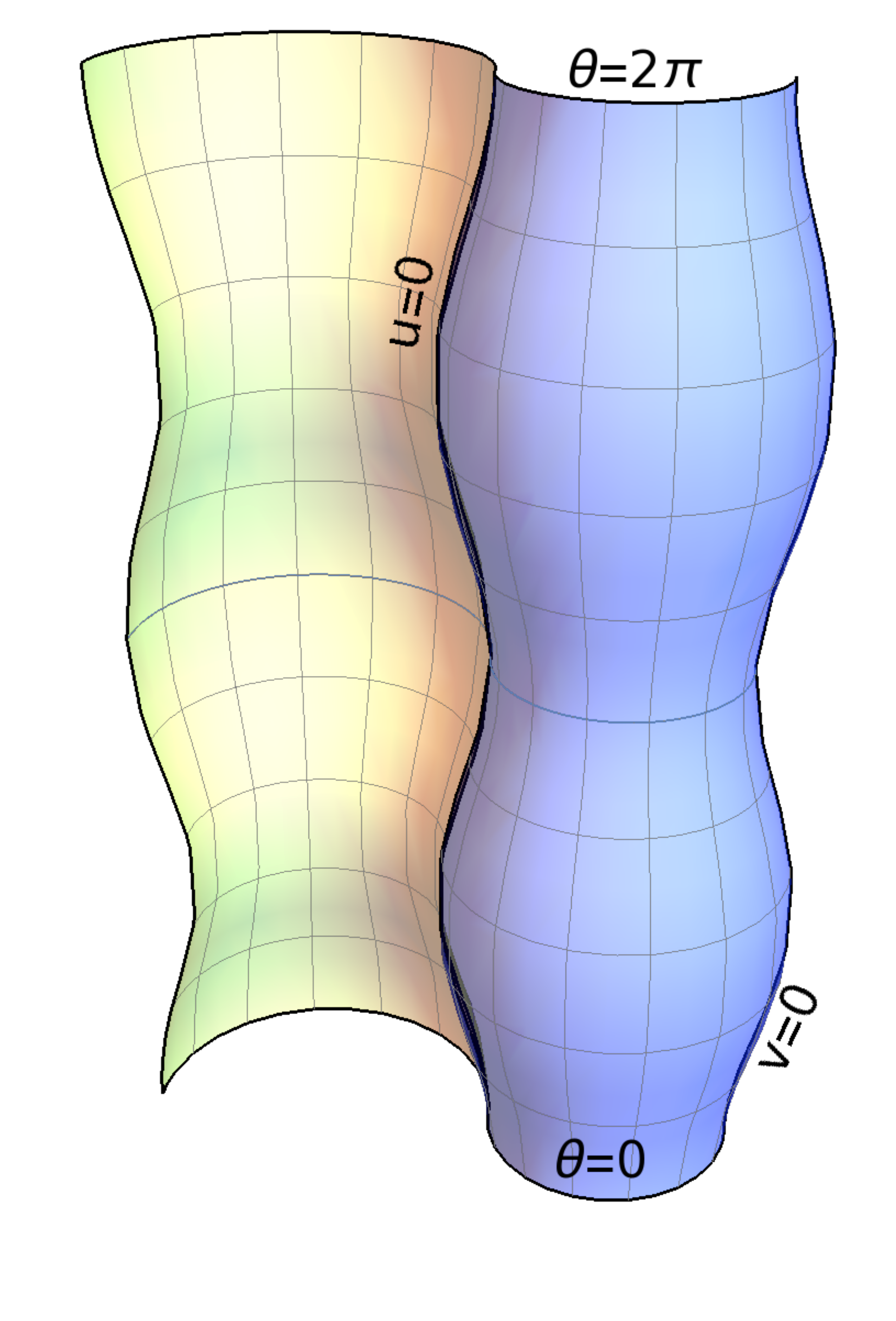}}
\vspace{-20pt}
\end{center}
\[\hbox{Figure 3: Surfaces of revolution associated 
to $\sF_2'$ and $\sF_3'$}\]

\vs\vs

Since $\sF_2'$ incorporates pairs $\buv$ in which the two vectors make
an arbitrary angle, we can use its geometry to measure the relative
orientation of the two $\R^3$ subspaces defined by $v=0$ and
$u=0$. The so-called `principal angles' between these subspaces are
determined by the function
\[\th\lmt\int_0^\pi\!\!{\ts\frac1{\sqrt2}}f(\th)\,d\phi = 
\pi\sqrt{\ts\frac38-\frac18\cos2\th}.\] Its value varies from $\pi/2$
to $\pi/\sqrt2\sim 127^\mathrm{o},$ and the bulges in the surface of
revolution reflect the fact that a semicircle of radius $R=1$ has
circumference $\sqrt2\pi$ when $\th=\pi/2$ or $3\pi/2$. The latter
corresponds to the situation of Proposition \ref{1ff} in which the two
vectors are anti-aligned.

Observe that $\pq^{-1}(\sF_i')$ is diffeomorphic to a cone over
$S^1\times S^2$ for $i=1,2,3$. Corollary \ref{van} tells us that the
connection $\Th_2$ is flat over $\R^4,$ so there it equals some exact
1-form $d\psi$. The restriction of the $\2$ metric $\hh_2$ to
$\pq^{-1}(\sF_2')$ equals
\[ \ba{rcl} \pq^*\gg_2+\qart N_c\,\Th_2\!^2
&=& \pq^*\gg_2+2uv(3-\cos2\th)d\psi^2\yy
&=& dR^2 + \frac12R^2\left[d\th^2+\qart(3-\cos2\th)
  \big(d\phi^2+\sin^2\!\phi\>d\psi^2\big)\right].\ea\]
Parallel semi-circles in the surface of revolution are the images by $\pq$ of
2-spheres with latitude $\psi,$ which {\gr collapse} at north and south poles
lying over the meridians $u=0$ and $v=0$. In this way, the circle quotient over
$\sF_2'$ is modelled metrically on the height function $S^2\to[-1,1]$.

\bibliographystyle{acm}
\bibliography{red}

\enddocument